\def\@settitle{\begin{center}%
    \baselineskip14\p@\relax
    \bfseries
    \@title
  \end{center}%
}
\newtheorem{theorem}{Theorem}
\newtheorem*{theorem*}{Theorem}
\newtheorem{proposition}[theorem]{Proposition}
\newtheorem*{proposition*}{Proposition}
\newtheorem*{remark*}{Remark}
\newtheorem{lemma}[theorem]{Lemma}
\newtheorem*{lemma*}{Lemma}
\newtheorem*{problem*}{Problem}
\newtheorem*{claim*}{Claim}
\newtheorem{observation}[theorem]{Observation}
\newtheorem*{observation*}{Observation}
\newtheorem*{conjecture*}{Conjecture}
\newtheorem{corollary}[theorem]{Corollary}
\newtheorem*{corollary*}{Corollary}
\newtheorem*{example*}{Example}
\newtheorem*{definitions*}{Обозначения}
\newtheorem*{definition*}{Определение}
\def\geq{\geqslant}
\def\leq{\leqslant}
\def\N{\mathbb{N}}
\def\Z{\mathbb{Z}}
\def\Q{\mathbb{Q}}
\def\R{\mathbb{R}}
\def\eps{\varepsilon}
\def\ra{\rightarrow}
\renewcommand{\notin}{\not\in}
\def\pr{\text{pr}}
\def\H{\mathcal{H}}
\def\T{\mathcal{T}}
\def\V{\mathcal{V}}
\author{Arthur Bikeev$^{12}$}
\thanks{
$^1$ Email: bikeev99@mail.ru, ORCID: 0009-0008-6655-9575}
\thanks{
$^2$ Moscow Institute of Physics and Technology, Moscow, Russia.}
\begin{document}
\title{\Large Isomorphisms of unit distance graphs of layers$^*$.
\footnote{$*$ I am grateful for useful discussions to A. Raigorodskii and V. Voronov.}}
\Large

\begin{abstract}
For any $\varepsilon \in (0,+\infty)$, consider the metric spaces $\mathbb{R} \times [0,\varepsilon]$ in the Euclidean plane named layers or strips. B. Baslaugh in 1998 found the minimal width $\varepsilon \in (0,1)$ of a layer such that its unit distance graph contains a cycle of a given odd length $k$. 
The first of the main results of this paper is the fact that the unit distance graphs of two layers $\mathbb{R} \times [0,\varepsilon_1], \mathbb{R} \times [0,\varepsilon_2]$ are non-isomorphic for any different values $\varepsilon_1,\varepsilon_2 \in (0,+\infty)$.

We also get a multidimensional analogue of this theorem. For given $n,m \in \mathbb{N}, p \in (1,+\infty), \varepsilon \in (0,+\infty)$, we say that the metric space on $\mathbb{R}^n \times [0,\varepsilon]^m$ with the metric space distance generated by $l_p$-norm in $\mathbb{R}^{n+m}$ is a \textit{layer} $L(n,m,p,\varepsilon)$. We show that the unit distance graphs of layers $L(n,m,p,\varepsilon_1), L(n,m,p,\varepsilon_2)$ are non-isomorphic for $\varepsilon_1 \neq \varepsilon_2$.

The third main result of this paper is the theorem that, for $n \geq 2, \varepsilon > 0$, any automorphism $\phi$ of the unit distance graph of layer $L = L(n,1,2,\varepsilon) = \mathbb{R}^n \times [0,\varepsilon]$ is an isometry. This is related to the Beckman-Quarles theorem of 1953, which states that any unit-preserving mapping of $\mathbb{R}^n$ is an isometry, and to the rational analogue of this theorem obtained by A. Sokolov in 2023.

\textbf{Keywords:} unit distance graphs, strip, layer, isomorphism of graphs, normed spaces, metric spaces.
\end{abstract}

\maketitle

\tableofcontents

\section{Introduction}\label{sec:intro}
The \textit{unit distance graph} of a set $V$ in some metric space is the graph in which $V$ is the set of vertices and two vertices are connected by edge if and only if the distance between them is $1$.

 The well-known Hadwiger–Nelson problem, named after H. Hadwiger and E. Nelson, asks for the \textit{chromatic number} $\chi(\R^2)$ of the Euclidean plane, i. e. the minimal number of colors required to color the unit distance graph of the plane such that no two vertices connected by an edge have the same color. The first non-trivial lower bound $\chi(\R^2) \geq 4$ was obtained in 1961 by brothers L. and W. Moser, and the upper bound $\chi(\R^2) \leq 7$ was obtained by H. Hadwiger in 1961. Since the breakthrough result of A. D. N. J. de Grey \cite{Gr18} in 2018 it is known that at least $5$ colors are necessary. 
 So, it is now known that $$5 \leq \chi(\R^2) \leq 7.$$  See \cite{Soi08} for more details.
 An analogous question can be posed for the chromatic number of spaces $\R^n$, $\Q^n$ with the standard Euclidean metric. In particular, it is known that $$(1.239...+0(1))^n \leq \chi(\R^n) \leq (3+o(1))^n.$$ The lower bound belongs to A. M. Raigorodskii (\cite{Ra00}). The upper bound was obtained by D. Larman and A. Rogers in \cite{LR72} and re-proved by R. Prosanov in \cite{Pro20}. See, for example, \cite{KVC}, \cite{VKSC} for more details on the known results for $\R^3, \Q^n$. In addition, there exist a number of results for the metric space on $\R^n$ with $l_p$-metric defined as
 $$||x||_p = \left(x_1^p + x_2 ^p + \dots + x_n^p \right)^{\frac{1}{p}}.$$

 See Raigorodskii's surveys \cite{Ra01}, \cite{Ra12} for results on the chromatic numbers in various metric spaces.

 For given $n,m \in \N, p \in (1,+\infty), \eps \in (0,+\infty)$, we say that the metric space on $\R^n \times [0,\eps]^m$ with the metric space distance generated by $l_p$-norm in $\R^{n+m}$ is a \textit{layer} $L(n,m,p,\eps)$. The layer $L(1,1,2,\eps)$ (that is, the metric space on $\R\times[0,\eps]$ with the standard Euclidean metric) is called a \textit{strip}.

 The chromatic numbers of the layers actively began to be studied by A. Kanel-Belov, V. Voronov and D. Cherkashin in \cite{KVC}. The authors obtained the following estimates: 
 $$5 \leq \chi(L(2,1,2,\eps)) = \chi(\R^2 \times [0,\eps]) = \leq 7 \text{ for } 0<\eps < \sqrt{3/7},$$
 $$\chi(L(2,d,2,\eps))= \chi(\R^2 \times [0,\eps]^d) \leq 7\text{ for }0<\eps < \eps_0(d),$$
 $$\chi(L(2,2,2,\eps))= \chi(\R^2 \times [0,\eps]^2) \geq 6 \text{ for } \eps >0,$$ and a number of results for rational layers. In \cite{VKSC} A. Kanel-Belov, V. Voronov, G. Strukov and D. Cherkashin give the estimate $$10 \leq \chi(L(3,6,2,\eps))= \chi(\R^3 \times [0,\eps]^6) \leq 15.$$
 In \cite{Sh19} L. Shabanov proved a Tur\'an-type lower bound on the minimal number of edges in a finite subgraph of the unit distance graph of the layer $L(2,d,2,\eps) = \R^2 \times [0,\eps]^d$, using its independence number.

Some other results for infinite and finite planar strips were obtained in \cite{Po21}, \cite{ACLMSS}, and \cite{OMH20}.
Also, there are a number of papers related to coloring of strips with specific restrictions. For example, B. Bauslaugh in \cite{Bo19} obtained some estimates for colorings with
forbidden intervals of distances between one-color points. V. Kirova in \cite{Ki23} deals with forbidden one-color arithmetic progressions.  N. Alon, M. Buci\'c and L. Sauermann in \cite{ABS23} consider the question of possible number of edges in finite subgraphs of unit distance graphs for various normed spaces.

B. Bauslaugh in \cite{Ba98} found the minimal width $\eps \in (0,1)$ of a strip such that its unit distance graph contains a cycle of a given odd length $k$.
In particular, this result gives the following estimate for the chromatic number of the strip $\R \times [0, \eps]$: 
$$\chi(\R \times [0,\eps]) \leq 3\text{ if and only if }\eps \leq \sqrt{3}/2.$$

The first of the main results of this paper is the fact that the unit distance graphs of two layers $\R \times [0,\eps_1], \R \times [0,\eps_2]$ are non-isomorphic for any different values $\eps_1,\eps_2 \in (0,+\infty)$.

We also get a multidimensional analogue of this theorem. In the second main result, we show that the unit distance graphs of layers $L(n,m,p,\eps_1), L(n,m,p,\eps_2)$ are non-isomorphic for $\eps_1 \neq \eps_2$.

There are a number of related results. 

L. Lichev and T. Mihaylov in \cite{LM24} consider a graph embeddable in $\R^d$ so that two vertices $u$ and $v$ form an edge if and only if their images in the embedding are at a distance in the interval $[R_1,R_2]$, instead of standard unit-distance graphs. They showed that the family $\mathcal{A}_d(R_1, R_2)$ of such graphs is uniquely characterized by $R_1/R_2$.

F. S. Beckman and D. A. Quarles in \cite{BQ53} proved that any unit-preserving mapping of $\R^n$ is an isometry. A. Sokolov in \cite{So23} obtained a rational analogue of Beckman--Quarles theorem. 

In \cite{Al70}, A. V. Aleksandrov posed the problem of characterising those
at least two, but finite dimensional normed spaces $X$ such that any transformation $\phi:X\ra X$ which preserves distance $1$ is an isometry,  hence it is usually
called the \textit{Aleksandrov (conservative distance) problem}. Some modified version of this problem were solved, but in full generality it is still open. For more details and related results, see, for example, \cite{Ku86}, \cite{Ra07}, \cite{Ge17}, \cite{HT18}.

The third result of this paper is theorem that, for $n \geq 2, \eps > 0$, any automorphism $\phi$ of the unit distance graph of layer $L = L(n,1,2,\eps) = \R^n \times [0,\eps]$ is an isometry.

\section{Main results}

\begin{theorem}\label{t:planar}
    For given $\eps_1,\eps_2 \in (0,+\infty)$, the unit distance graphs of the strips $L(1,1,2,\eps_1)$ $= \R \times [0,\eps_1]$ and $L(1,1,2,\eps_2) = \R \times [0,\eps_2]$ are isomorphic if and only if $\eps_1=\eps_2$.
\end{theorem}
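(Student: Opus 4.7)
The plan is to show that for any $\varepsilon_1 < \varepsilon_2$ in $(0, +\infty)$, some finite graph $H$ embeds in the unit distance graph $G(\varepsilon_2) := G(L(1,1,2,\varepsilon_2))$ as a subgraph but not in $G(\varepsilon_1)$; since existence of a finite subgraph is a graph isomorphism invariant, this will give non-isomorphism.

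The first tool is Baslaugh's theorem \cite{Ba98}: for each odd $k \geq 3$ there is a critical width $\varepsilon_k$ such that the odd cycle $C_k$ embeds in $G(\varepsilon)$ if and only if $\varepsilon \geq \varepsilon_k$, and these thresholds form a strictly decreasing sequence $\sqrt{3}/2 = \varepsilon_3 > \varepsilon_5 > \cdots \to 0$. If the interval $(\varepsilon_1, \varepsilon_2]$ contains some $\varepsilon_k$, then $H = C_k$ separates $G_1$ from $G_2$ directly, and we are done.

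The harder case is when $\varepsilon_1, \varepsilon_2$ lie in a common Baslaugh interval $(\varepsilon_{k+2}, \varepsilon_k]$, or when both are at least $1$ (where Baslaugh's bound is vacuous). My approach is to construct a refined family of \emph{critical gadgets}: finite unit distance graphs whose minimum bounding-strip widths densely fill $(0, +\infty)$. One natural construction attaches extra vertices or chord-paths to a near-threshold odd cycle, perturbing the critical width in a combinatorially controlled way. A complementary approach is to identify boundary vertices of $L(1,1,2,\varepsilon)$ graph-theoretically (the combinatorial structure of the neighborhood $N(v)$ is a single arc for boundary vertices and otherwise a union of two arcs or a full circle), then apply a Lichev--Mihaylov style analysis \cite{LM24} to the induced subgraph on the two boundary lines, whose cross-strip unit edges appear at offset $\sqrt{1 - \varepsilon^2}$ (for $\varepsilon < 1$) and hence encode $\varepsilon$ combinatorially.

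The principal obstacle is computing the exact critical width $\tau(H)$ for each gadget: the upper bound is by explicit embedding, but the matching lower bound requires a case analysis over all possible unit distance realizations of $H$, in the spirit of Baslaugh's original argument but with additional structural constraints. A secondary difficulty is to cover the whole half-line $(0, +\infty)$, especially the wide regime $\varepsilon \geq 1$, where a different family of gadgets based on two-sided (circular) neighborhood structure must replace the odd-cycle-based ones; here local analysis of the induced subgraph on $N(v)$ and of how two neighborhoods overlap should suffice to read off $\varepsilon$ as the largest ``vertical spread'' realized in a suitable combinatorial configuration.
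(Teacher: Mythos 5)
There is a genuine gap: your proposal commits to distinguishing the two graphs by \emph{finite subgraph containment}, and then defers exactly the step on which that strategy lives or dies. Baslaugh's thresholds $\eps_k$ form a discrete sequence accumulating only at $0$, so for a typical pair $\eps_1<\eps_2$ no odd cycle separates the graphs; everything then rests on your ``refined family of critical gadgets'' with critical widths dense in $(0,+\infty)$, and you explicitly acknowledge that both the construction and the matching lower bounds (a case analysis over all unit-distance realizations of each gadget) are not carried out. That is not a secondary obstacle --- it is the entire content of the theorem in your formulation, and it is far from clear that it can be done: the critical width $\tau(H)=\inf\{\eps: H\hookrightarrow G(\eps)\}$ of an abstract finite graph is controlled by its \emph{worst} (thinnest) realization, and perturbing a near-critical odd cycle by attaching chords gives no a priori control over alternative realizations. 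The paper avoids this issue entirely: its distinguishing configurations, the $(N,M)$-combs of Proposition \ref{prop:comb} and the $m$-sandwiches, are \emph{geometric} configurations (their definitions involve collinearity, horizontality, and the order of projections), not abstract finite subgraphs, and the bulk of the proof (Proposition \ref{prop:eq}, Lemma \ref{l:main1}, Lemmas \ref{l:vert}--\ref{l:horizontal}) is devoted to showing that an abstract graph isomorphism is forced to preserve these geometric features, via asymptotic comparisons of graph distances to sequences of points escaping to infinity. The bare unit-distance graph on a comb's vertex set does not have $\sqrt{1-N^2/(4M^2)}$ as its subgraph-containment threshold, precisely because nothing in the abstract graph forces the $a_i$ to be collinear or the teeth to interleave.

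Your fallback ideas also need repair before they could carry weight. The neighborhood of a boundary vertex of $\R\times[0,\eps]$ with $\eps<1$ is two arcs, not one (the unit circle about $(x,0)$ meets the strip in $\theta\in[0,\arcsin\eps]\cup[\pi-\arcsin\eps,\pi]$), and in any case ``being an arc'' is a geometric property of the uncountable set $N(v)$ that you would first have to characterize combinatorially --- this is the same kind of work the paper does with Lemma \ref{l:main1} and Corollary \ref{cor:boundary}. Likewise, even granting a graph-theoretic identification of the two boundary lines, the induced graph on them is the graph on $\R\sqcup\R$ with within-line adjacency at distance $1$ and cross-line adjacency at offset $\sqrt{1-\eps^2}$; the claim that this graph's isomorphism type determines the offset is itself a nontrivial statement of the same flavor as the theorem, and is asserted rather than proved. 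In short, the high-level goal (exhibit an isomorphism invariant that pins down $\eps$) is right, but every candidate invariant you name is either too sparse (odd cycles), unproved to exist (dense families of finite gadgets), or requires the geometric-reconstruction machinery that constitutes the actual proof.
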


The main idea of the proof is to show that any isomorphism of the unit distance graphs of layers must preserve some geometric properties. For example, any such isomorphism maps a segment parallel to $[0,\eps_1]$ into a segment parallel to $[0,\eps_2]$ and preserves the order of points in these segments. Next, we show that this isomorphism preserves sets named $(N,M)$-combs (see Fig. \ref{fig:comb}). A similar argument can be found in \cite{HKOSY} by T. Hayashi, A. Kawamura, Y. Otachi, H. Shinohara and K. Yamazaki.
Since the ratio $N /M$ gives a lower bound on the minimal width of a layer containing a $(N,M)$-comb, and this lower bound grows monotonically depending on the parameter $N/M$, we can distinguish the unit distance graphs of layers of different widths. In the case $\eps_i \geq 1$, we use some additional arguments.

Next, we introduce a multidimensional generalization of Theorem \ref{t:planar}.

\begin{theorem}\label{t:main}
For given $n \in \N \cap [2,+\infty)$, $m\in \N$, $p\in (1,+\infty)$, $\eps_1,\eps_2 \in (0,+\infty)$, the unit distance graphs of the layers $L_1 = L(n,m,p,\eps_1), L_2 = L(n,m,p,\eps_2)$ are isomorphic if and only if $\eps_1 = \eps_2$.
\end{theorem}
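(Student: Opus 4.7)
The ``if'' direction is immediate. For the ``only if'' direction, I plan to lift the $(N,M)$-comb argument underlying Theorem~\ref{t:planar} from the Euclidean planar strip to the $l_p$-layer setting in arbitrary dimension.

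The first step is to introduce an analogue of the $(N,M)$-comb inside $L(n,m,p,\eps)$: namely, a configuration consisting of a ``spine'' of consecutive unit-distance points moving along a single unbounded coordinate direction, together with $M$ ``teeth'' that branch off the spine and traverse the bounded box $[0,\eps]^m$. The crucial quantitative property to establish is the existence of a strictly monotone function $f_{n,m,p}:(0,+\infty)\to(0,+\infty]$ such that an $(N,M)$-comb embeds in the unit distance graph of $L(n,m,p,\eps)$ if and only if $N/M\leq f_{n,m,p}(\eps)$. This reads $\eps$ off as a graph-theoretic invariant, namely the supremum of $N/M$ over all embeddable combs, and reduces to a one-dimensional $l_p$-calculation analogous to the one in Theorem~\ref{t:planar}.

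The second step is to prove that any graph isomorphism $\phi$ between the unit distance graphs of $L_1 = L(n,m,p,\eps_1)$ and $L_2 = L(n,m,p,\eps_2)$ sends $(N,M)$-combs to $(N,M)$-combs. Following the outline given after Theorem~\ref{t:planar}, this reduces to showing that $\phi$ maps short segments parallel to a bounded direction to similar segments in $L_2$ and preserves the linear order of their points; the hypothesis $n\geq 2$ is used to provide enough unbounded dimensions to characterize such segments purely graph-theoretically, via common-neighborhood patterns of nearby spine vertices. Combining preservation of combs with monotonicity of $f_{n,m,p}$ forces $f_{n,m,p}(\eps_1)=f_{n,m,p}(\eps_2)$, and hence $\eps_1=\eps_2$.

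I expect two principal obstacles. First, the rigidity arguments used in the planar Euclidean case must be recast for $l_p$ with $p\neq 2$, where the unit sphere lacks rotational symmetry, so that graph-theoretic recognition of ``vertical'' directions and bounded-axis lengths requires $l_p$-specific configurations rather than the usual triangle-rigidity toolbox. Second, the regime of large $\eps$ (analogous to the case $\eps\geq 1$ singled out in Theorem~\ref{t:planar}) will likely demand a separate treatment, as the naive comb construction degenerates once the bounded box is wider than the unit sphere; there one may need alternative invariants such as rigid $l_p$-simplices or particular induced cycles whose presence is again strictly controlled by $\eps$. Finally, the presence of $m\geq 2$ bounded directions forces care in aligning the teeth with a single coordinate axis rather than with a diagonal of $[0,\eps]^m$, so that the quantitative invariant $f_{n,m,p}(\eps)$ truly measures the width in one bounded direction.
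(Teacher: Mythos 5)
Your overall strategy---structure preservation followed by a quantitative width invariant---matches the paper's, but the invariant you choose (combs) is not the one the paper uses for Theorem~\ref{t:main}, and as proposed the argument has two genuine gaps.

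First, the step you compress into ``common-neighborhood patterns of nearby spine vertices'' is where essentially all of the work lies, and it cannot be waved through. The paper spends most of Section~4 on it: Proposition~\ref{prop:dist2} shows that for $n\geq 2$ the relation $||x-y||_p=2$ is characterized by the existence of a \emph{unique} common neighbor (this uses convexity of $L_i$ and fails for $n=1$), which yields preservation of all integer distances (Corollary~\ref{cor:distk}) and of horizontal lines (Corollary~\ref{cor:horizontal2}); Lemma~\ref{l:main2} then characterizes, via asymptotic comparison of graph distances along horizontal rays, whether $\pr(y)$ lies in the interior of the convex hull of $\pr(x_1),\dots,\pr(x_{n+1})$, and its proof genuinely uses the smoothness of the $l_p$ ball (uniqueness of the supporting hyperplanes $P_{\textbf u}$) and its symmetry with respect to $\H$. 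Only after Lemmas~\ref{l:vert2}, \ref{l:hor_line} and \ref{l:ratio} does one know that $f$ preserves vertical fibers $V_x$, horizontal lines, and all real distances on horizontal lines. Your comb argument presupposes exactly these facts: without them you cannot certify graph-theoretically that the teeth actually traverse the bounded box $[0,\eps]^m$ rather than escaping into a second unbounded coordinate, which is possible precisely because $n\geq 2$. So the comb step cannot be run before this machinery is in place, and once it is in place the comb is no longer needed.

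Second, the quantitative claim is wrong as stated: there is no strictly monotone $f_{n,m,p}$ on all of $(0,+\infty)$ governing comb embeddability (and note that in Proposition~\ref{prop:comb} the embeddability condition is a \emph{lower} bound on $N/M$, not an upper bound as you wrote). The teeth of a comb are unit segments, so they cannot detect the width once the vertical fiber has diameter $\eps\, d_{m,p}\geq 1$; this is exactly why the planar proof needs the separate sandwich and modified-comb analysis for $\eps_i\geq 1$, and your ``alternative invariants'' for that regime are left unspecified. The paper sidesteps all of this: once vertical fibers, horizontal distances and integer distances are known to be preserved, Lemma~\ref{l:final} uses the coordinate-splitting identity $||y-z||_p^p=||x-y||_p^p+||x-z||_p^p$ for $y\in V_x$, $z\in H_x$ to recover $\eps\cdot d_{m,p}$ as the supremum of $(k^p-\delta^p)^{1/p}$ over right triangles with integer hypotenuse $k$ and horizontal leg $\delta$; since $k$ may be taken arbitrarily large, this works uniformly for all $\eps$ with no case split and no extremal comb computation. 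I recommend dropping the comb invariant in the multidimensional setting and concentrating on the structure-preservation lemmas, which are the real content of the proof.
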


\begin{remark*}
    The condition $p\in (1,+\infty)$ in the formulation of Theorem \ref{t:main} is necessary because the proof is true only for smooth norms. The unit distance graphs of the layers $L(n,1,+\infty, \eps_1)$ and $L(n,1,+\infty, \eps_2)$ are obviously isomorphic for any $\eps_1, \eps_2 \in (0,1)$.
\end{remark*}

In the proof of Theorem \ref{t:main}, we show that any isomorphism of the unit distance graphs of the layers $L_1, L_2$ preserves straight lines (parallel to the subspace $\R^n$) and preserves the distances on these lines.
    The key difference between the cases $n=1$ and $n\geq 2$ can be explained by Proposition \ref{prop:dist2} that does not hold in the case $n=1$. In fact, this makes the proof in the case $n \geq 2$ a little easier.

    In fact, the proof of Theorem \ref{t:main} with some simple additional details gives the following theorem related to the Aleksandrov problem.

    \begin{theorem}\label{t:appendix}
        For $n \geq 2, \eps > 0$, any automorphism $\phi$ of the unit distance graph of layer $L = L(n,1,2,\eps) = \R^n \times [0,\eps]$ is an isometry.
    \end{theorem}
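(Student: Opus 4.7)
The plan is to bootstrap from the structural information established in the proof of Theorem~\ref{t:main} (which applies verbatim when $\eps_1=\eps_2=\eps$) and then supplement it with a sphere-rigidity argument specific to $n\ge 2$. First, I invoke that proof to conclude that the automorphism $\phi$ maps every affine line parallel to $\R^n$ to an affine line parallel to $\R^n$ and preserves distances along such lines.

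From this I deduce that $\phi$ sends horizontal slices to horizontal slices. For any $p=(x,t)\in L$, every horizontal line through $p$ maps to a horizontal line through $\phi(p)$, and any two such lines through $p$ intersect at $p$, which forces their images to lie in the slice containing $\phi(p)$. Since $n\ge 2$, any two points of $\R^n\times\{t\}$ lie on a common horizontal line, so $\phi$ sends the entire slice into a single slice $\R^n\times\{\sigma(t)\}$ for some bijection $\sigma\colon[0,\eps]\to[0,\eps]$. The restriction $\phi_t:=\phi|_{\R^n\times\{t\}}$, viewed as a self-map of $\R^n$, preserves lines and distances along them, hence is an isometry of $\R^n$.

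The key step is to compare slices. Fix $t_1,t_2\in[0,\eps]$ with $0<|t_1-t_2|<1$, set $r:=\sqrt{1-(t_1-t_2)^2}>0$, and let $\psi:=\phi_{t_2}^{-1}\circ\phi_{t_1}$, an isometry of $\R^n$. For any $x_1,x_2\in\R^n$ with $|x_1-x_2|=r$, the points $(x_1,t_1)$ and $(x_2,t_2)$ are at unit distance, and applying $\phi$ gives
\[
|\psi(x_1)-x_2|^2+(\sigma(t_1)-\sigma(t_2))^2=1.
\]
Fixing $x_1$ and letting $x_2$ vary over the Euclidean sphere of radius $r$ about $x_1$, the quantity $|\psi(x_1)-x_2|$ must be constant. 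Since $n\ge 2$, only the centre of a sphere in $\R^n$ is equidistant from all its points, so $\psi(x_1)=x_1$ and the constant value forces $|\sigma(t_1)-\sigma(t_2)|=|t_1-t_2|$. Varying $x_1$ gives $\psi=\mathrm{id}$, i.e., $\phi_{t_1}=\phi_{t_2}$.

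Finally, any two heights in $[0,\eps]$ are joined by a chain of steps of size less than $1$, so all $\phi_t$ coincide with a single affine isometry $\Phi$ of $\R^n$, while $\sigma$ is locally, and hence globally, an isometry of $[0,\eps]$, forcing $\sigma(t)=t$ or $\sigma(t)=\eps-t$. Consequently $\phi(x,t)=(\Phi(x),\sigma(t))$, which is manifestly an isometry of $L$. The main obstacle is the sphere-rigidity step above: it is exactly where the assumption $n\ge 2$ is essential, consistent with the failure of this simpler argument for $n=1$ that necessitates the separate comb-based treatment in Theorem~\ref{t:planar}.
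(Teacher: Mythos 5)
Your proposal is correct, and it reaches the conclusion by a genuinely different final step than the paper. Both arguments lean on the same machinery imported from the proof of Theorem~\ref{t:main} (with $L_1=L_2=L$, $f=\phi$): preservation of horizontal lines and of distances along them. From there the paper additionally invokes Lemma~\ref{l:vert2} (preservation of vertical segments) and Corollary~\ref{cor:distk} (preservation of integer distances), and finishes with a single Pythagorean computation: for a vertical segment $[x,y]$ of length $\alpha$ it picks $z\in H_x$ with $\|z-x\|_2=\sqrt{k^2-\alpha^2}$ and $\|z-y\|_2=k\in\N$, and recovers $\alpha$ from the preserved quantities. You instead avoid Lemma~\ref{l:vert2} entirely: you show slices map to slices, that each slice map is a Euclidean isometry, and then use the unit-distance relation between nearby slices together with sphere rigidity to force $\phi_{t_1}=\phi_{t_2}$ and $|\sigma(t_1)-\sigma(t_2)|=|t_1-t_2|$. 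Your route is somewhat longer but yields a stronger, fully explicit normal form $\phi(x,t)=(\Phi(x),\sigma(t))$ with $\sigma(t)=t$ or $\eps-t$, whereas the paper's route is shorter but only asserts preservation of all distances. Two small points to tidy up: (1) you should note that $\phi_{t}$ is surjective onto its target slice (apply the same slice argument to $\phi^{-1}$) so that $\phi_{t_2}^{-1}$ is defined; (2) your closing remark slightly misplaces where $n\ge 2$ is essential --- the sphere-rigidity claim itself is true even for $n=1$ (a two-point ``sphere'' still determines its midpoint); the hypothesis $n\ge 2$ is really consumed earlier, in the borrowed lemmas such as Proposition~\ref{prop:dist2} and Corollary~\ref{cor:horizontal2}, which is why the $n=1$ case needs the separate comb argument.
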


\section{Proof of Theorem \ref{t:planar}}\label{sec:proof_planar}

Denote by $||\cdot||_2$ the standard Euclidean norm.

Let $\eps_1, \eps_2 \in (0, +\infty)$. Consider the layers $L_1 = L(1,1,2,\eps_1) = \R\times [0,\eps_1]$ and $L_2 = L(1,1,2,\eps_2) = \R\times [0,\eps_2]$, and denote the unit distance graphs of these layers by $G_1$ and $G_2$, respectively. Denote by $\rho_i(x,y)$ the length of shortest path between two vertices $x,y$ in the graph $G_i$.

Let us suppose that there exists an isomorphism $f: L_1 \ra L_2$ of the graphs $G_1, G_2$. We will prove that this implies $\eps_1 = \eps_2$.

\begin{observation}
    For any $x,y \in L_1$, we have $\rho_1(x,y) = \rho_2(f(x),f(y))$, because $f$ is an isomorphism of the graphs $G_1,G_2$. In particular, $x=y$ if and only if $f(x)=f(y)$.
    
    For any $x,y \in L_i$, we have $\rho_i(x,y) = 1$ if and only if $||x-y||_2=1$. Therefore, $||x-y||_2 = 1$ if and only if $||f(x)-f(y)||_2 = 1$.
\end{observation}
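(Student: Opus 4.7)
The statement is really a direct unpacking of what it means for $f$ to be an isomorphism of the unit distance graphs, so my plan is just to spell out the two elementary steps rather than to introduce any geometric machinery.

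First, I would recall the general fact that any graph isomorphism preserves the shortest-path metric: if $f:V(G_1)\to V(G_2)$ is a bijection such that $\{u,v\}$ is an edge of $G_1$ iff $\{f(u),f(v)\}$ is an edge of $G_2$, then a sequence $x=v_0,v_1,\dots,v_k=y$ is a path in $G_1$ iff $f(x)=f(v_0),f(v_1),\dots,f(v_k)=f(y)$ is a path in $G_2$. Taking the infimum over path lengths in both directions yields $\rho_1(x,y)=\rho_2(f(x),f(y))$ for every $x,y\in L_1$. The ``in particular'' clause then follows from $\rho_i(x,y)=0\Leftrightarrow x=y$, together with injectivity of $f$.

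Second, for the equivalence $\rho_i(x,y)=1\Leftrightarrow \|x-y\|_2=1$, I would observe that in a unit distance graph, $\rho_i(x,y)=1$ means exactly that $x\neq y$ and $\{x,y\}$ is an edge, which by definition of the unit distance graph of $L_i$ is the same as $\|x-y\|_2=1$. Note $x\neq y$ is automatic since $\|x-y\|_2=1\neq 0$.

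Combining the two steps gives the last assertion: $\|x-y\|_2=1$ iff $\rho_1(x,y)=1$ iff $\rho_2(f(x),f(y))=1$ iff $\|f(x)-f(y)\|_2=1$. There is no genuine obstacle here; the only thing to keep in mind is that the whole observation relies on edges in $G_i$ being defined \emph{exactly} when the Euclidean distance equals $1$, with no other pairs adjacent, so that the graph-metric value $1$ is in bijection with the Euclidean value $1$.
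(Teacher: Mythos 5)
Your proposal is correct and coincides with the paper's own (implicit) justification: the paper states this as an Observation with no separate proof, relying on exactly the two facts you spell out — that a graph isomorphism preserves the shortest-path metric, and that adjacency in a unit distance graph is by definition equivalent to Euclidean distance $1$. Nothing further is needed.
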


For $i \in \{1,2\}$, denote by $\pr(x)$ the projection of a point $x \in L_i$ onto the line $\R \times \{0\} \subset L_i$. We say that any line (segment, etc.) in $L_i$ parallel to $\R\times \{0\}$ is a \textit{horizontal} line (segment, etc.), and any segment in $L_i$ parallel to the segment $\{0\} \times [0,\eps_i]$ is a \textit{vertical}  segment (that is, a segment $[x,y]$ is vertical if $\pr(x) = \pr(y)$).

Denote by $\left\lceil \alpha \right\rceil$ the ceiling function, that is, the least integer greater than or equal to $\alpha$.

\begin{proposition}\label{prop:eq}
     There exists a constant $c=c(\eps_i) >0$ such that the following holds. For any $a,b \in L_i$, if $||a-b||_2 > c$, then $\rho_i(a,b) = \left\lceil ||a-b||_2\right\rceil$. In particular, $G_i$ is connected.
\end{proposition}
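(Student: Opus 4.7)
The plan is to prove the two inequalities $\rho_i(a,b)\ge\lceil\|a-b\|_2\rceil$ and $\rho_i(a,b)\le\lceil\|a-b\|_2\rceil$ separately. The lower bound is immediate from the triangle inequality applied to any unit-edge path joining $a$ to $b$, together with the integrality of $\rho_i$. The content therefore lies entirely in producing an explicit path of length exactly $N:=\lceil D\rceil$ between $a$ and $b$, where $D:=\|a-b\|_2$, whenever $D$ exceeds a threshold $c(\eps_i)$; the connectedness of $G_i$ will then follow by routing any two vertices through a third point placed horizontally far from both.

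For the construction I would fix coordinates so that $a=(0,y_a)$ and $b=(X,y_b)$ with $X\ge 0$, place the evenly spaced points $p_k:=a+(k/N)(b-a)\in[a,b]\subset L_i$ on the segment, and seek a path of the form $q_k=p_k+\alpha_k n$, where $n$ is the unit vector perpendicular to $b-a$ whose second coordinate $y(n)=X/D$ is non-negative. Since $\|q_k-q_{k-1}\|_2^2=(D/N)^2+(\alpha_k-\alpha_{k-1})^2$, unit edges force $|\alpha_k-\alpha_{k-1}|=\delta:=\sqrt{1-(D/N)^2}$, while the endpoints impose $\alpha_0=\alpha_N=0$. When $N$ is even I would set $\alpha_k=0$ for even $k$ and $\alpha_k=\pm\delta$ for odd $k$, choosing the sign at each odd $k$ so that $y(q_k)=y(p_k)\pm\delta\cdot y(n)\in[0,\eps_i]$; such a sign always exists provided $\delta\cdot y(n)\le\eps_i/2$. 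When $N$ is odd I would first take the preliminary unit step from $a$ to $a':=a+(b-a)/D$, which lies in $L_i$ by convexity, observe that $\|a'-b\|_2=D-1$ and $\lceil D-1\rceil=N-1$ is even, and then apply the preceding even-$N$ construction on $[a',b]$ for the remaining $N-1$ edges.

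The only genuinely technical step is choosing $c$ to enforce the bound $\delta\cdot y(n)\le\eps_i/2$. Because $N\le D+1$ one has $\delta\le\sqrt{(2D+1)/(D+1)^2}\to 0$ as $D\to\infty$, and $y(n)\le 1$, so the inequality is secured whenever $D$ exceeds some explicit threshold depending only on $\eps_i$ (the regime $\eps_i\ge 2$ being automatic, since then $\delta\le 1\le\eps_i/2$). Taking $c(\eps_i)$ to be the maximum of this threshold and the corresponding threshold for $D-1$ ensures that both the direct even-case and the odd-case reduction apply, yielding $\rho_i(a,b)\le N=\lceil\|a-b\|_2\rceil$ whenever $\|a-b\|_2>c$. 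Finally, connectedness of $G_i$ follows by joining arbitrary $a,b\in L_i$ through any point $c_0$ with $\|c_0-a\|_2,\|c_0-b\|_2>c$, which exists because $L_i$ is horizontally unbounded.
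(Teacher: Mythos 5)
Your proof is correct. Both your argument and the paper's establish the lower bound trivially from the triangle inequality and obtain the upper bound by exhibiting an explicit polygonal chain of $\lceil\|a-b\|_2\rceil$ unit edges hugging the segment $[a,b]$, so the overall strategy is the same; the executions, however, differ in a way worth noting. The paper first walks $\lceil n/3\rceil$ straight unit steps along $[a,b]$ from each end (possible because that length is an integer), argues that the two intermediate points are at distance at least $\eps_i/4$ from one of the two boundary lines, and then performs the zigzag only inside a middle rectangle guaranteed to lie in the strip, with the actual chain described by figures. You instead place the $N+1$ evenly spaced points on the whole segment and offset them alternately by $0$ and $\pm\delta$ in the direction perpendicular to $b-a$, with $\delta=\sqrt{1-(D/N)^2}$ forced by the unit-edge condition; the key observation that the sign at each odd index can be chosen independently, so that containment in the strip reduces to the single inequality $\delta\cdot y(n)\le\eps_i/2$ (automatic for $\eps_i\ge 2$ and secured for large $D$ since $\delta^2\le(2D+1)/(D+1)^2\to 0$), replaces the paper's case analysis and rectangle construction. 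The parity reduction (one preliminary straight step when $N$ is odd, using $\lceil D-1\rceil=N-1$) is a clean way to make the alternating pattern close up at both endpoints. Your version is more uniform and self-contained, and yields an explicit threshold $c(\eps_i)$; the paper's version localizes the zigzag but pays for it with extra bookkeeping.
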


\begin{proof}
If $||a-b||_2 \in \N$, then the statement is obvious.
Let
$$a,b \in L_i, ||a-b||_2 = n+\delta, \delta \in (0,1), n >10\eps+10.$$ 

Without loss of generality, let
$$a =(x_1,y_1), b=(x_2,y_2), x_1<x_2, y_1 \leq y_2.$$

Take the points 
$$a' = a+\left\lceil \frac{n}{3}\right\rceil \cdot \frac{b-a}{||a-b||_2},$$
$$b' = b+\left\lceil \frac{n}{3}\right\rceil \cdot \frac{a-b}{||a-b||_2}$$ on the segment $[a,b]$ (they forms the "middle part" of the segment). 

We have 
$$\rho_{i}(a,a') = ||a-a'||_2 = \left\lceil \frac{n}{3}\right\rceil, \rho_{i}(b,b') = ||b-b'||_2 = \left\lceil \frac{n}{3}\right\rceil.$$
Then 
$$y_1,y_2 > \frac{\eps_i}{4} \text{ or } y_1,y_2 < \frac{3\eps_i}{4},$$  
that is, the distances from these points to at least on of the lines $\R\times\{0\}, \R\times\{\eps_i\}$ are sufficiently large.

For definiteness, let $y_1,y_2 > \frac{\eps_i}{4}$.
Take the point $b'' = b' + \frac{a-b}{||a-b||_2}$ on the distance $1$ from $b'$. Then $$||a'-b''||_2 = n- 2\cdot\left\lceil \frac{n}{3}\right\rceil -1+ \delta.$$

We are left to prove that 
$$\rho_{G_i}(a',b'') = \left\lceil ||a'-b''||_2\right\rceil= n- 2\cdot\left\lceil \frac{n}{3}\right\rceil,$$ for any sufficiently large $n$.

Since 
$$\angle a'b''\pr(b') > \pi/4,\;||a'-\pr(a')||_2> \frac{\eps_i}{4},\;||b'-b''||_2 = 1,$$ there exists a rectangle $a'b''cd$ containing in $L_i$ such that $||a'-d||_2 > \min\left(1,\frac{\eps_i}{4}\right)$, see Fig.\ref{fig:prop_C_planar}. Then, for any sufficiently large $n$, there exists a path of length $\left\lceil ||a'-b''||_2\right\rceil$ from $a'$ to $b''$ in this rectangle, by a construction shown on Fig. \ref{fig:polygonal_chain}. In this construction, we use the inequality $||a'-d||_2 > \min\left(1,\frac{\eps_i}{4}\right)$, that allows us to choose $$||a'-a'_1||_2 \in \left[2\sqrt{1-\min\left(1,\frac{\eps_i}{4}\right)^2},2\right].$$

Therefore,
$$\rho_{i} (a',b'') = n- 2\cdot\left\lceil \frac{n}{3}\right\rceil;$$
$$\rho_{i} (a,b) \leq \rho_{i}(a,a') + \rho_{i}(a',b'')+1+\rho_{i}(b',b) \leq n+1 = \left\lceil ||a-b||_2\right\rceil.$$

\end{proof}
\begin{figure}[ht]
\center{\includegraphics[scale=0.5, width=310pt]{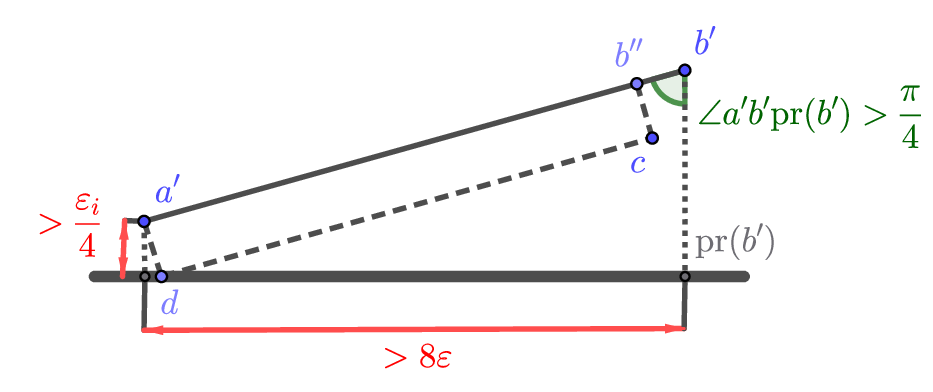}}
\caption{The rectangle $a'b''cd$}
\label{fig:prop_C_planar}
\end{figure}

\begin{figure}[ht]
\center{\includegraphics[scale=0.5, width=320pt]{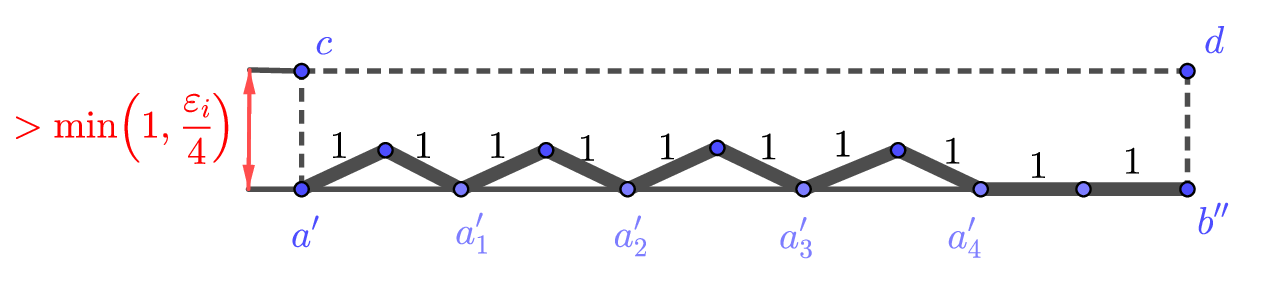}}
\caption{The shortest path from $a'$ to $b''$ in $G_i$.}
\label{fig:polygonal_chain}
\end{figure}

\begin{corollary}\label{cor:inf}
   Let $y \in L_i$ be a point, and let $\{z_m: m\in N\} \subset L_i$ be a sequence of points. Then the condition
   $$\rho_{i}(z_m, y) \xrightarrow{m\rightarrow \infty} + \infty$$   
 is equivalent to
    $$||z_m-y||_2 \xrightarrow{m\rightarrow \infty} + \infty.$$
\end{corollary}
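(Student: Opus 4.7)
The plan is to use Proposition \ref{prop:eq} in both directions, with the forward implication being essentially immediate and the reverse requiring a short argument by contradiction via an auxiliary ``anchor'' point.

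For the direction $\|z_m - y\|_2 \to +\infty \Rightarrow \rho_i(z_m,y) \to +\infty$, I would observe that once $m$ is large enough that $\|z_m - y\|_2 > c$, Proposition \ref{prop:eq} gives $\rho_i(z_m, y) = \lceil \|z_m - y\|_2 \rceil \geq \|z_m - y\|_2$, which already tends to $+\infty$. So this half is a one-line invocation of the preceding proposition.

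For the reverse direction $\rho_i(z_m,y) \to +\infty \Rightarrow \|z_m - y\|_2 \to +\infty$, I would argue by contradiction: suppose there is a subsequence (still denoted $z_m$) with $\|z_m - y\|_2 \leq R$ for some finite $R$, and show that $\rho_i(z_m, y)$ is then bounded. Fix once and for all an auxiliary point $w \in L_i$ with $\|w - y\|_2 > c + R$; such a point exists since $L_i$ contains a horizontal line. By the (Euclidean) triangle inequality, $\|w - z_m\|_2 \geq \|w - y\|_2 - R > c$ as well. Hence Proposition \ref{prop:eq} applies to both pairs $(y, w)$ and $(w, z_m)$, giving
\begin{equation*}
\rho_i(y, w) = \lceil \|y - w\|_2 \rceil, \qquad \rho_i(w, z_m) = \lceil \|w - z_m\|_2 \rceil \leq \|w - y\|_2 + R + 1.
\end{equation*}
Using the graph triangle inequality $\rho_i(y, z_m) \leq \rho_i(y,w) + \rho_i(w, z_m)$, the right-hand side is a fixed constant depending only on $\eps_i$, $y$, $w$, $R$, contradicting $\rho_i(z_m, y) \to +\infty$.

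The only real obstacle is the reverse direction, and it amounts to choosing $w$ far enough from $y$ so that both $\|w-y\|_2 > c$ and $\|w - z_m\|_2 > c$ simultaneously; otherwise Proposition \ref{prop:eq} could not be applied to estimate $\rho_i(w, z_m)$ from above in terms of Euclidean distance. Since we may enlarge $\|w-y\|_2$ freely along the horizontal line, this is straightforward once the bound $R$ on the subsequence is fixed.
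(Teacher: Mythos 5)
Your proof is correct and follows the route the paper intends: the corollary is stated there without proof as an immediate consequence of Proposition \ref{prop:eq}, and your write-up supplies exactly the missing details. The forward direction is the one-line invocation you describe, and your anchor-point argument for the reverse direction (choosing $w$ with $\|w-y\|_2 > c+R$ so that Proposition \ref{prop:eq} applies to both pairs and the graph triangle inequality bounds $\rho_i(y,z_m)$) is a clean and valid way to handle the only step that is not literally immediate.
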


Now we are ready to prove a key auxiliary lemma.

\begin{lemma}\label{l:main1}
Let $i \in \{1,2\}$, and let $x',x'',y$ be distinct points in $L_i$. Then the following two conditions are equivalent.

$(\Gamma)$. For any sequence of points $z_1,z_2,... \in L_i$, such that $\rho_{i}(z_m, y) \xrightarrow{m\rightarrow \infty} + \infty$, there exists $m_0 \in \N$, such that for any $m > m_0$ we have $\min(\rho_{i}(z_m, x')$, $\rho_{i}(z_m, x''))$ $<\rho_{i}(y-z_m)$.
    
$(\Omega)$. The points $\pr(x'),\pr(x'')$ are distinct. The projection $\pr(y)$ lies strictly in the interior of the segment $[\pr(x'),\pr(x'')]$.
\end{lemma}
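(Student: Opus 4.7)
The plan is to translate the graph-distance condition $(\Gamma)$ into a Euclidean-distance condition via Proposition \ref{prop:eq} (which gives $\rho_i(a,b)=\lceil\|a-b\|_2\rceil$ once $\|a-b\|_2$ is large) and Corollary \ref{cor:inf} (which lets us replace $\rho_i(z_m,y)\to\infty$ by $\|z_m-y\|_2\to\infty$), and then to recognize the resulting condition as the projection statement $(\Omega)$.

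For $(\Omega)\Rightarrow(\Gamma)$, I would assume without loss of generality that $\pr(x')<\pr(y)<\pr(x'')$ and let $z_m$ be any sequence with $\rho_i(z_m,y)\to\infty$. By Corollary \ref{cor:inf}, $\|z_m-y\|_2\to\infty$, and since $L_i$ is vertically bounded, this forces $|\pr(z_m)|\to\infty$. Passing to a subsequence we may assume $\pr(z_m)\to+\infty$ (the $-\infty$ case is symmetric with $x'$ replacing $x''$). A direct Pythagorean computation then gives
$$\|z_m-y\|_2-\|z_m-x''\|_2\ \longrightarrow\ \pr(x'')-\pr(y)>0.$$
Combined with Proposition \ref{prop:eq}, this positive Euclidean gap yields $\rho_i(z_m,x'')<\rho_i(z_m,y)$ for all sufficiently large $m$, which delivers $(\Gamma)$.

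For $(\Gamma)\Rightarrow(\Omega)$, I would argue the contrapositive. If $(\Omega)$ fails, then either $\pr(x')=\pr(x'')$, or $\pr(y)$ lies outside the open segment bounded by $\pr(x')$ and $\pr(x'')$. In the first case, the two projections coincide, and by choosing $z_m$ on the bottom edge $\R\times\{0\}$ with $\pr(z_m)\to+\infty$ one checks, using Proposition \ref{prop:eq}, that $\rho_i(z_m,x')$ and $\rho_i(z_m,x'')$ both remain $\geq\rho_i(z_m,y)$ along an infinite subsequence. In the second case, say $\pr(y)\geq\max(\pr(x'),\pr(x''))$; then for $z_m$ with $\pr(z_m)\to+\infty$, $y$ is horizontally closer to $z_m$ than both $x'$ and $x''$, and the same ceiling comparison yields $\rho_i(z_m,y)\leq\min(\rho_i(z_m,x'),\rho_i(z_m,x''))$ for all large $m$, contradicting $(\Gamma)$.

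The main technical subtlety is in the direction $(\Omega)\Rightarrow(\Gamma)$: since the Euclidean gap $\pr(x'')-\pr(y)$ can be strictly less than $1$, the corresponding gap between ceilings $\lceil\|z_m-y\|_2\rceil-\lceil\|z_m-x''\|_2\rceil$ is not automatically positive, and one must track the fractional parts of $\|z_m-y\|_2$ and $\|z_m-x''\|_2$ carefully along the extracted subsequence (exploiting the remaining vertical freedom in the strip). The converse direction is more robust, since we only need to exhibit a single sequence witnessing the failure of $(\Gamma)$.
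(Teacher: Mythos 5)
Your overall route is the same as the paper's: use Proposition \ref{prop:eq} and Corollary \ref{cor:inf} to turn graph distances into ceilings of Euclidean distances, extract a subsequence with $\pr(z_m)\to\pm\infty$, and finish with planar geometry. The genuine problem is the step you yourself flag in $(\Omega)\Rightarrow(\Gamma)$: from $\|z_m-y\|_2-\|z_m-x''\|_2\to\pr(x'')-\pr(y)>0$ you cannot conclude $\rho_i(z_m,x'')<\rho_i(z_m,y)$, because when $\pr(x'')-\pr(y)<1$ the two ceilings can coincide. Your proposed repair (``tracking fractional parts, exploiting the remaining vertical freedom'') is not available here: in this direction $z_m$ is an \emph{arbitrary} given sequence, so you have no freedom to adjust it. Worse, the obstruction is not merely technical: with $x'=(-0.1,0)$, $y=(0,0)$, $x''=(0.1,0)$ and $z_m=(m+\tfrac12,0)$ one gets $\rho_i(z_m,x')=\rho_i(z_m,x'')=\rho_i(z_m,y)=m+1$ for all large $m$, so the strict inequality in $(\Gamma)$ fails even though $(\Omega)$ holds. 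The implication as literally stated is therefore false; the statement one can actually prove (and the one that suffices for Lemma \ref{l:vert}, since it is still purely graph-theoretic) replaces $<$ by $\leq$ in $(\Gamma)$, after which $\|z_m-x''\|_2<\|z_m-y\|_2$ does give $\lceil\|z_m-x''\|_2\rceil\leq\lceil\|z_m-y\|_2\rceil$ and your argument closes. To be fair, the paper's own write-up glosses over exactly the same point, so you identified a real defect --- but you did not resolve it, and your suggested fix cannot work.

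There is a second, smaller defect in your contrapositive direction: placing $z_m$ on the bottom edge $\R\times\{0\}$ is the wrong choice. Take $x'=(0,0)$, $x''=(0,\eps_i)$, $y=(0,\eps_i/2)$, so that $(\Omega)$ fails; for $z_m=(m,0)$ one has $\rho_i(z_m,x')=m$ while $\rho_i(z_m,y)=\lceil\sqrt{m^2+\eps_i^2/4}\,\rceil=m+1$, so the strict inequality of $(\Gamma)$ \emph{holds} along your sequence and it fails to witness $\neg(\Gamma)$. The paper instead takes $z_m=y+m(1,0)$, on the horizontal line through $y$ itself: then $\rho_i(z_m,y)=m$ exactly, while for $x\in\{x',x''\}$ the distance $\|z_m-x\|_2$ is at least the horizontal gap $m+(\pr(y)-\pr(x))\geq m$, whence $\rho_i(z_m,x)\geq m$ and $(\Gamma)$ fails. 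This single choice also handles both of your cases uniformly after the single reduction that $\pr(x')$ and $\pr(x'')$ lie on one closed side of $\pr(y)$, so your case split is unnecessary.
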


\begin{proof}
By a slight abuse of the notation, we compare points on the line $\R\times \{0\}$ as real numbers.

    \textit{Proof of the implication $(\neg\Omega)\Rightarrow(\neg\Gamma)$}. Let condition $(\Omega)$ does not hold.
    For definiteness, let $\pr(x') \leq \pr(y), \pr(x'') \leq \pr(y)$. Denote ${\textbf u}=(1,0)$ and consider the points $z_m = y + m{\textbf u}$ for $m \in \N$. Then for any $m \in \N, x \in \{x',x''\}$ we have
$$m = ||y-z_m||_2 = ||\pr(y)-\pr(z_m)||_2 = \rho_{i}(\pr(z_m), \pr(y)) \leq \rho_{i}(\pr(z_m), \pr(x)).$$
Hence, condition $(\Gamma)$ does not hold (see Fig. \ref{fig:sup_lemma_planar_1}). 

    \textit{Proof of the implication $(\Omega)\Rightarrow(\Gamma)$}.
    Let condition $(\Omega)$ holds. For definiteness, let $\pr(x') < \pr(y) < \pr(x'')$.
    Consider an arbitrary sequence $z_1,z_2,... \in L_i$, such that $\rho_{i} (z_m, y)\xrightarrow{m\rightarrow \infty} + \infty$. By Corollary \ref{cor:inf} we have $||z_m-y||_2 \xrightarrow{m\rightarrow \infty} + \infty$. Then $z_n$ contains a subsequence $\{z_{k_m}: m\in \N\}$ such that $\pr(z_{k_m}) \xrightarrow{m\rightarrow \infty} -\infty$ or $\pr(z_{k_m}) \xrightarrow{m\rightarrow \infty} +\infty$. Without loss of generality, we assume that $z_m = z_{k_m}$, and $\pr(z_m) \xrightarrow{m\rightarrow \infty} +\infty$. Then, for any sufficiently large $m$, the angle $\angle yx''z_m$ is obtuse, and $||y-z_m||_2 > ||x''-z_m||_2$.  Thus, condition $(\Gamma)$ holds (see Fig. \ref{fig:sup_lemma_planar_2}).
\end{proof}

\begin{figure}[ht]
\center{\includegraphics[scale=0.5, width=250pt]{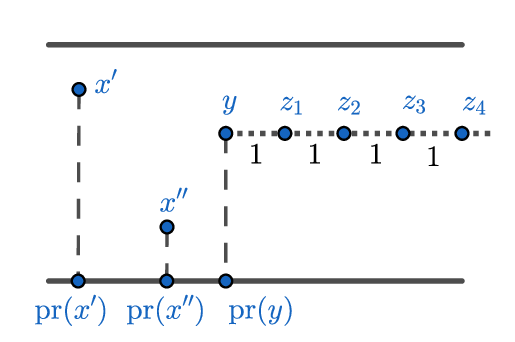}}
\caption{Proof of the implication $(\neg\Omega)\Leftarrow(\neg\Gamma)$. The point $y$ is closer to $z_m$ than $x'$ and $x''$.}
\label{fig:sup_lemma_planar_1}
\end{figure}

\begin{figure}[ht]
\center{\includegraphics[scale=0.5, width=250pt]{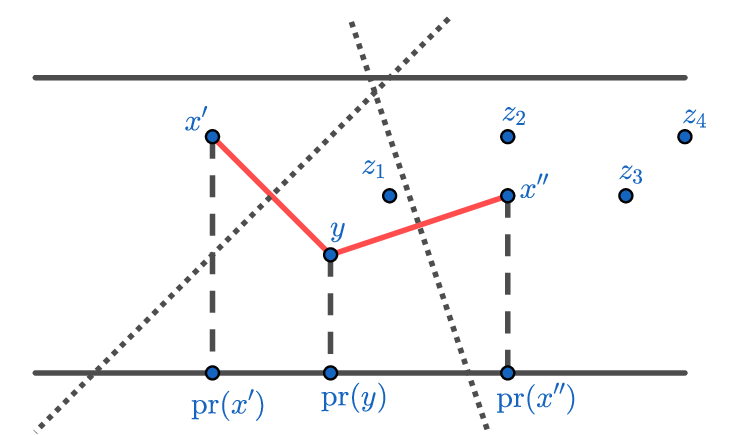}}
\caption{Proof of the implication $(\Omega)\Leftarrow(\Gamma)$. The perpendicular bisectors to the segments $[x',y]$ and $[x'',y]$ separate $z_m$ from $y$ for sufficiently large $m$.}
\label{fig:sup_lemma_planar_2}
\end{figure}

The following lemma shows that $f$ preserves the vertical segments.

\begin{lemma}\label{l:vert}
Let $x',x'' \in L_1$ be two distinct points. Then $\pr(x') = \pr(x'')$ if and only if $\pr(f(x')) = \pr(f(x''))$.
\end{lemma}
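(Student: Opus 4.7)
The plan is to use Lemma \ref{l:main1} as a bridge between the graph structure and the geometry of projections. Condition $(\Gamma)$ is stated purely in terms of the graph distances $\rho_i$, so it is preserved by the isomorphism $f$: for any triple of distinct points $x', x'', y \in L_1$, the triple $(x', x'', y)$ satisfies $(\Gamma)$ in $L_1$ if and only if the triple $(f(x'), f(x''), f(y))$ satisfies $(\Gamma)$ in $L_2$. The verification uses that $f$ is a bijection satisfying $\rho_1(a,b) = \rho_2(f(a),f(b))$, so sequences $\{z_m\} \subset L_1$ with $\rho_1(z_m, y) \to \infty$ correspond bijectively (via $f$) to sequences $\{f(z_m)\} \subset L_2$ with $\rho_2(f(z_m), f(y)) \to \infty$, and the inequalities $\min(\rho_1(z_m,x'),\rho_1(z_m,x'')) < \rho_1(y,z_m)$ are copied over verbatim. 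Combining with Lemma \ref{l:main1} applied in both $L_1$ and $L_2$, the same equivalence holds for condition $(\Omega)$.

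Next I would reformulate $(\Omega)$ so that it no longer mentions the auxiliary point $y$: namely, for distinct $x', x'' \in L_i$, one has $\pr(x') \neq \pr(x'')$ if and only if there exists $y \in L_i \setminus \{x', x''\}$ for which $(x',x'',y)$ satisfies $(\Omega)$. The reverse direction is immediate from the first clause of $(\Omega)$. For the forward direction, given $\pr(x') \neq \pr(x'')$, pick any point $y$ of $L_i$ (say on $\R \times \{0\}$) whose projection lies strictly between $\pr(x')$ and $\pr(x'')$; the open interval of admissible projections is infinite, so $y$ can certainly be chosen distinct from $x'$ and $x''$.

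Putting the two observations together, and applying the reformulated characterization to the pair $(f(x'), f(x''))$ in $L_2$ — using that $f$ is a bijection, so a witness $y \in L_1$ for the triple $(x',x'',\cdot)$ exists if and only if a witness $f(y) \in L_2$ for the triple $(f(x'),f(x''),\cdot)$ exists — we conclude that $\pr(x') \neq \pr(x'')$ if and only if $\pr(f(x')) \neq \pr(f(x''))$, which is equivalent to the desired statement. The only nontrivial step is the preservation of condition $(\Gamma)$ under $f$, and that is ultimately just careful quantifier bookkeeping, with Corollary \ref{cor:inf} ensuring that ``tending to infinity'' in $\rho_i$-distance has the same meaning in $L_1$ and $L_2$.
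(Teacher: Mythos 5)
Your proof is correct and follows essentially the same route as the paper: characterize $\pr(x')\neq\pr(x'')$ by the existence of a witness $y$ making condition $(\Omega)$ hold, convert to condition $(\Gamma)$ via Lemma \ref{l:main1}, and observe that $(\Gamma)$ is stated purely in terms of the graph metrics $\rho_i$ and is therefore preserved by the isomorphism $f$. Your explicit remark that surjectivity of $f$ lets any witness $\widetilde y\in L_2$ be written as $f(y)$ is a detail the paper leaves implicit, but the argument is otherwise the same.
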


\begin{proof}

The inequality $\pr(x') \neq \pr(x'')$ holds if and only if there exists a point $y \in L_1$ such that condition $(\Omega)$ of Lemma \ref{l:main1} applied to $x',x'',y$ is true. By Lemma \ref{l:main1}, this holds if and only if there exists a point $y \in L_1$ such that condition $(\Gamma)$ of Lemma \ref{l:main1} applied to $x',x'',y$ is true.

Analogously, the inequality $\pr(f(x')) \neq \pr(f(x''))$ is equivalent to the following: there exists a point $\widetilde y \in L_2$ such that condition $(\Gamma)$ applied to $f(x'),f(x''),\widetilde y \in L_2$ is true.

Since $f$ is an isomorphism of graphs, for any $a,b \in L_1$ we have $\rho_{1}(a,b) = \rho_{2}(f(a), f(b))$. Thus, condition $(\Gamma)$ for $x',x'',y$ is equivalent to condition $(\Gamma)$ for $f(x'),f(x''),f(y)$ for any $x',x'',y \in L_1$.

Thus, $\pr(x') = \pr(x'')$ if and only if $\pr(f(x')) = \pr(f(x''))$.
\end{proof}

The following lemma shows that $f$ preserves the order of points on a vertical segment.

\begin{lemma}\label{l:vert_xyz}
    Let $x,y,z \in L_1$ be distinct points lying on the same vertical line, and let $y \in (x,z)$. Then $f(x),f(y),f(z) \in L_2$ are distinct points lying on the same vertical line, and $f(y) \in (f(x), f(z))$.
\end{lemma}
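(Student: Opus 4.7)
The plan is to leverage Lemma \ref{l:vert} to pin down $f(x),f(y),f(z)$ to a common vertical line in $L_2$, and then to characterize vertical betweenness by a purely graph-theoretic condition that the isomorphism $f$ automatically preserves. Applying Lemma \ref{l:vert} to the pairs $(x,y)$ and $(y,z)$ gives $\pr(f(x))=\pr(f(y))=\pr(f(z))$, so $f(x),f(y),f(z)$ lie on a common vertical line in $L_2$; they are distinct since $f$ is a bijection. It remains to show that $f(y)$ lies strictly between $f(x)$ and $f(z)$ on this line.

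The key claim is the following betweenness characterization: for distinct points $u,v,w\in L_i$ on a common vertical line, $v\in[u,w]$ if and only if for every sequence $\{z_m\}_{m\in\N}\subset L_i$ with $\rho_i(z_m,v)\xrightarrow{m\to\infty}+\infty$, we have $\rho_i(z_m,v)\leq \max(\rho_i(z_m,u),\rho_i(z_m,w))$ for all sufficiently large $m$. Since this condition involves only graph distances, it is preserved by $f$; applying it in $L_1$ to $(x,y,z)$ and in $L_2$ to $(f(x),f(y),f(z))$ yields $y\in[x,z]\Leftrightarrow f(y)\in[f(x),f(z)]$, which together with distinctness is exactly the conclusion of the lemma.

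The forward direction of the claim is routine: if $v\in[u,w]$, then by convexity of $||p-\cdot||_2$ we have $||p-v||_2\leq \max(||p-u||_2,||p-w||_2)$ for every $p\in L_i$. Taking ceilings, which preserves $\leq$ and distributes over $\max$, and invoking Proposition \ref{prop:eq} together with Corollary \ref{cor:inf} (which guarantees that $||z_m-u||_2,||z_m-v||_2,||z_m-w||_2$ are all eventually greater than the constant $c$ of Proposition \ref{prop:eq}), one transfers the Euclidean inequality to graph distances for all large $m$. For the reverse direction I argue by contrapositive. If $v\notin[u,w]$, then after swapping $u$ and $w$ if needed, we are in one of the cases (a) $v_2>w_2>u_2$ or (b) $v_2<u_2<w_2$. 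In case (a), writing $a$ for the common first coordinate, I take $z_n:=(a+\sqrt{n^2-w_2^2},\,0)\in L_i$ for integer $n\geq\lceil w_2\rceil$; a direct computation gives $||z_n-w||_2=n$, $||z_n-u||_2<n$, and $||z_n-v||_2>n$, so Proposition \ref{prop:eq} yields, for all sufficiently large $n$, $\rho_i(z_n,w)=n$, $\rho_i(z_n,u)\leq n$, and $\rho_i(z_n,v)\geq n+1$, contradicting the stated condition. Case (b) is symmetric: place $z_n$ on the top boundary $\R\times\{\eps_i\}$ and calibrate the first coordinate so that $||z_n-u||_2=n$.

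The main obstacle, and the reason for the boundary-and-calibration construction, is that along a generic sequence $z_m$ escaping to infinity with $v$ outside $[u,w]$, the Euclidean excess $||z_m-v||_2 - \max(||z_m-u||_2,||z_m-w||_2)$ decays like $O(1/|z_m|)$, and such an infinitesimal gap is typically absorbed by the ceiling in Proposition \ref{prop:eq}, producing no strict inequality between the corresponding graph distances. Anchoring $z_n$ on a boundary line of the layer and forcing one of the endpoint-distances to be exactly an integer promotes this infinitesimal Euclidean gap to a full unit gap in $\rho_i$, which is what powers the contradiction; this delicate transfer from Euclidean to graph distance is the heart of the argument.
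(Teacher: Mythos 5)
Your proof is correct and follows essentially the same route as the paper: Lemma \ref{l:vert} pins the images to a common vertical line, betweenness is characterized by an asymptotic graph-distance inequality that $f$ preserves, Proposition \ref{prop:eq} converts between Euclidean and graph distances in the forward direction, and the failure case is refuted by a far test point calibrated to an exact integer distance so the ceiling cannot absorb the gap. The only cosmetic differences are that the paper derives the forward inequality from the perpendicular bisectors of $[x,y]$ and $[y,z]$ rather than from convexity of the norm, and its counterexample is an isosceles triangle with both endpoint distances equal to an integer rather than a boundary-anchored point with one integral distance.
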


\begin{proof}
The points $f(x),f(y),f(z)$ are distinct since $f$ is an isomorphism of graphs.
    Thus, by Lemma \ref{l:vert}, $f(x),f(y),f(z) \in L_2$ are distinct points on the same vertical line because $\pr(x)=\pr(y)=\pr(z) \Leftrightarrow \pr(f(x)) = \pr(f(y))=\pr(f(z))$.

    The perpendicular bisectors to the segments $[x,y]$ and $[y,z]$ are two parallel horizontal lines. Thus, for any point $a \in L_1$ we have $\max(||a-x||_2, ||a-z||_2) > ||a-y||_2$, see Fig. \ref{fig:y_between_xz}.a). Then, due to Proposition \ref{prop:eq}, there exists a constant $C$ such that $\rho_1(a,y) > C$ implies $\max(\rho_{1}(a,x), \rho_{1}(a,z)) \geq \rho_{1}(a,y)$ for any $a \in L_1$.
    
    Then, since $f$ is an isomorphism of graphs, $\rho_1(a,y)>C$ implies $$\max(\rho_{2}(f(a),f(x)), \rho_{2}(f(a),f(z))) \geq \rho_{2}(f(a),f(y))$$ for any $a \in L_1$. Due to Proposition \ref{prop:eq}, there exists a constant $C'$ such that $\rho_2(b,f(y)>C'$ implies $\rho_2(b,f(y)) > \rho_1(f^{-1}(a),y)$ for any $b \in L_2$.
    
    Hence, $\max(\rho_{2}(b,f(x)), \rho_{2}(b,f(z))) \geq \rho_{2}(b,f(y))$ for any $b \in L_2$ that satisfies $\rho_2(b,f(y)>C'$. However, if $f(y) \notin (f(x), f(z))$, then this is not true. It is enough to consider an isosceles triangle $bf(x)f(z)$ that satisfies $||b-f(x)||_2=||b-f(z)||_2 = m \in \N \cap (C'+1, +\infty)$, see Fig. \ref{fig:y_between_xz}.b).

    Thus, $f(y) \in (f(x),f(z))$.
\end{proof}

\begin{figure}[ht]
\center{\includegraphics[scale=0.5, width=350pt]{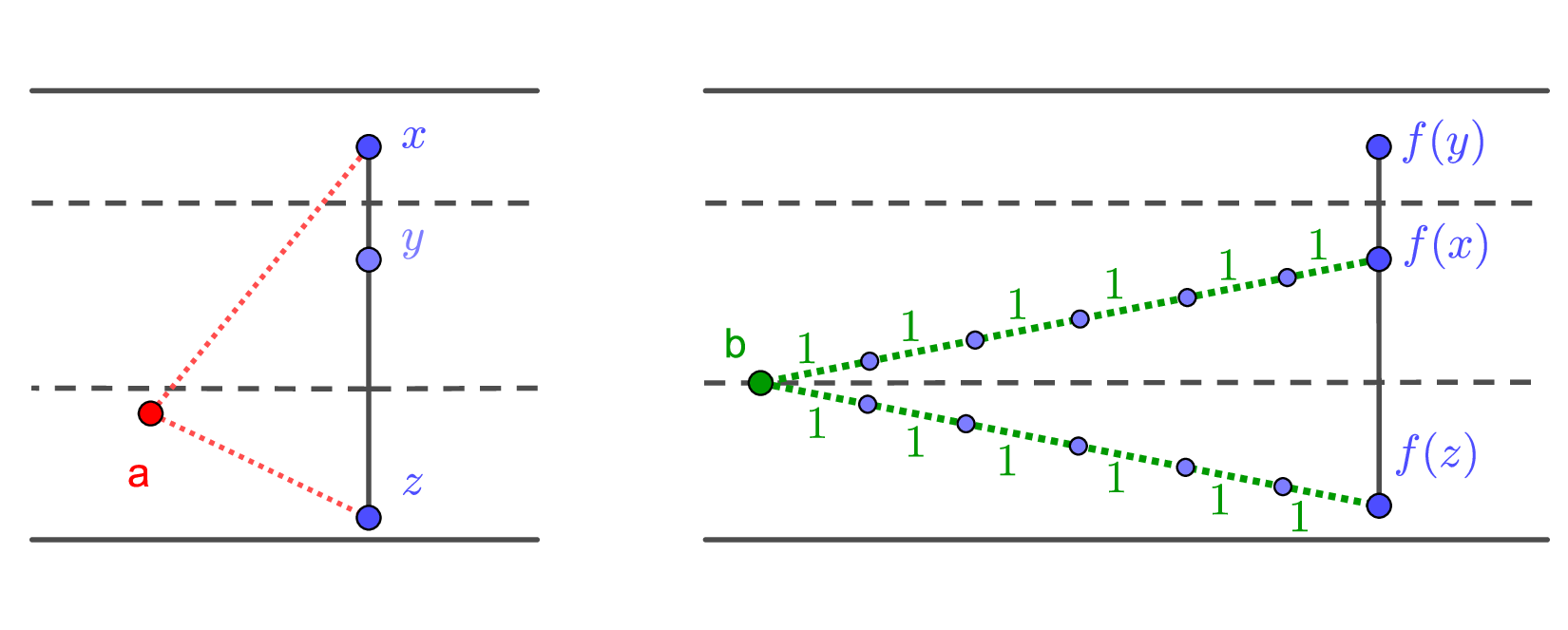}}
\caption{a). The case $y \in [x,z]$, $||a-x||_2 \geq ||a-y||_2$.  b) The case $f(x) \in [f(y), f(z)]$, $\max(\rho_{1}(b,f(x)), \rho_{1}(b,f(z))) < \rho_{1}(b,f(y))$}
\label{fig:y_between_xz}
\end{figure}

\begin{corollary}\label{cor:boundary}
    For any $y \in L_1$ such that $y \in \R\times\{0, \eps_1 \}$, we have $f(y) \in \R\times\{0, \eps_2 \}$.
\end{corollary}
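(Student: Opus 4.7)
The plan is to prove the corollary by contradiction, using Lemma \ref{l:vert_xyz} applied to the inverse isomorphism. By the symmetric roles of $\eps_1$ and $\eps_2$ in Theorem \ref{t:planar}, every structural lemma proved so far for $f$ applies verbatim to $f^{-1} : L_2 \to L_1$; in particular one has an analogue of Lemma \ref{l:vert_xyz} with $L_1$ and $L_2$ interchanged. This symmetric form is the only nontrivial ingredient.

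By the symmetry between the top and bottom boundary edges (the argument for $y \in \R \times \{\eps_1\}$ is identical), it suffices to treat the case $y \in \R \times \{0\}$. Suppose, for contradiction, that $f(y)$ lies strictly in the open interior $\R \times (0, \eps_2)$ of $L_2$. Then the second coordinate of $f(y)$ is strictly between $0$ and $\eps_2$, so one can pick sufficiently small $t, s > 0$ such that both points $a := f(y) - (0, t)$ and $b := f(y) + (0, s)$ lie in $L_2$, on the vertical line through $f(y)$, and satisfy $f(y) \in (a, b)$.

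Set $x := f^{-1}(a)$ and $z := f^{-1}(b)$. Applying the $f^{-1}$-analogue of Lemma \ref{l:vert_xyz} to the triple $(a, f(y), b)$ in $L_2$ forces $x, y, z$ to lie on a common vertical line of $L_1$ with $y$ strictly between $x$ and $z$. This contradicts $y \in \R \times \{0\}$: every point on the vertical line through $y$ in $L_1$ has second coordinate in $[0, \eps_1]$, in particular at least the second coordinate of $y$, which is $0$; so $y$ cannot lie strictly between two other points on that vertical line.

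I do not expect a genuine obstacle. The only point worth emphasizing is the observation that all lemmas leading up to this corollary are symmetric in the two layers, so they apply equally to $f^{-1}$ with the roles of $L_1$ and $L_2$ swapped; once this is recorded, the deduction above is a two-line reductio.
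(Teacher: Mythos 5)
Your proof is correct and follows essentially the same route as the paper: both arguments rest on characterizing the boundary points as exactly those that are not strictly between two other points on a vertical line, and then transporting this property via Lemma \ref{l:vert_xyz}. Your explicit remark that the lemma must be applied to $f^{-1}$ (legitimate since all preceding lemmas are symmetric in $L_1,L_2$) is a detail the paper's one-line proof leaves implicit, so it is a welcome clarification rather than a deviation.
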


\begin{proof}
    We have $y \in \R\times\{0, \eps_1 \}$ if and only if there are no such points $x,z \in L_1$, such that $x,y,z$ are distinct points on the same vertical line, and $y\in [x,z]$. At the same time, $f(y) \in \R\times\{0, \eps_2 \}$ if and only if there are no such points $x',z'\in L_2$, such that $x',f(y),z'$ are distinct points on the same vertical line, and $f(y)\in [x',z']$. We are only left to apply Lemma \ref{l:vert_xyz}.
\end{proof}

The following lemma shows that $f$ preserves the horizontal segments of integer length.

\begin{lemma}\label{l:horizontal}
    Let $x,y \in L_1$, and $||x-y||_2=1$. Then $x,y$ lie on the same horizontal line if and only if $f(x), f(y)$ lie on the same horizontal line.
\end{lemma}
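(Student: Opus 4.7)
To prove Lemma \ref{l:horizontal} my plan is to characterize unit horizontal edges in $G_i$ via a graph-theoretic property depending only on the structures already shown to be preserved by $f$: the partition into vertical lines (Lemma \ref{l:vert}), the vertical betweenness order (Lemma \ref{l:vert_xyz}), and unit-distance adjacency. Throughout I will write $V_z$ for the vertical line through $z \in L_i$.

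The geometric observation is that, for two distinct vertical lines $V, V'$ at horizontal separation $d$, unit edges between them exist only when $d \in [\sqrt{1 - \eps_i^2}, 1]$, and their arrangement depends on $d$. When $d = 1$ every such unit edge has the form $\{(t, h), (t + 1, h)\}$, so the edges pair points of equal height and induce a monotone correspondence between the two lines. When $d \in [\sqrt{1 - \eps_i^2}, 1)$ unit edges occur at two opposite slopes $\pm \sqrt{1 - d^2}$ and the correspondence is no longer monotone. My characterization is: a unit edge $\{x, y\}$ with $\pr(x) \neq \pr(y)$ is horizontal iff, for every triple of pairwise distinct unit edges $\{x_j, y_j\}$, $j = 1, 2, 3$, with $x_j \in V_x$ and $y_j \in V_y$, the betweenness pattern of $(x_1, x_2, x_3)$ on $V_x$ coincides with that of $(y_1, y_2, y_3)$ on $V_y$ under the correspondence $x_j \leftrightarrow y_j$. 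Since betweenness is orientation-invariant, this is a purely graph-theoretic condition.

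The horizontal direction of the characterization is immediate from the form of the edges. For the non-horizontal direction I construct three unit edges between $V_x$ and $V_y$ combining positive- and negative-slope edges exhibiting incompatible betweenness; a short case analysis by the height of $x$ in the strip handles the subcases. The degenerate configuration $d = \sqrt{1 - \eps_i^2}$, in which only two unit edges exist between $V_x$ and $V_y$ so that no triple can be formed, is handled by a cardinality argument: $f$ restricts to a bijection from unit edges spanning $V_x \times V_y$ onto unit edges spanning $V_{f(x)} \times V_{f(y)}$, and so preserves their cardinality; this cardinality is $2$ in the degenerate case but uncountable when $d = 1$, which rules out a horizontal image in $L_2$. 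The vertical unit edge case $V_x = V_y$, possible only if $\eps_i \geq 1$, is immediate from Lemma \ref{l:vert}. Once the characterization is in place the conclusion follows at once, with the converse direction obtained by applying the same argument to $f^{-1}$.

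The main obstacle is the non-horizontal direction, i.e.\ exhibiting a crossing triple for every position of $\{x, y\}$ in the strip. This is elementary but geometrically delicate near the boundary; the condition $\eps_i > 0$ guarantees enough room to construct the required configurations in all cases.
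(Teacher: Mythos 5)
Your proof is correct, but it takes a genuinely different route from the paper's. The paper characterizes a horizontal unit edge $[x,y]$ by a maximality property of its projection: $[x,y]$ is horizontal iff there is no unit edge $[x',y']$ whose projection $[\pr(x'),\pr(y')]$ contains both $\pr(x)$ and $\pr(y)$ strictly in its interior, and then makes ``strictly in the interior of a projected segment'' graph-theoretic by invoking Lemma \ref{l:main1} (the asymptotic condition $(\Gamma)\Leftrightarrow(\Omega)$) a second time. You instead analyze the whole set of unit edges spanning the two vertical lines $V_x, V_y$ and distinguish the horizontal separation $d=1$ (a height-preserving perfect matching) from $d<1$ (edges of two opposite vertical displacements $\pm\sqrt{1-d^2}$) via betweenness of triples on the vertical lines, which is preserved by Lemma \ref{l:vert_xyz} --- a lemma the paper's proof of this statement does not use --- together with a cardinality count ($2$ versus uncountable) for the degenerate separation $d=\sqrt{1-\eps_i^2}$ where no triple exists. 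Your invariant is sound: the spanning-edge set is carried bijectively onto that of $V_{f(x)}, V_{f(y)}$ by Lemma \ref{l:vert}, and a crossing triple such as $(0\to s)$, $(a\to a+s)$, $(s\to 0)$ with $0<a<\min(s,\eps_i-s)$ settles every non-degenerate non-horizontal case, so the deferred case analysis is indeed short. What the paper's route buys is economy --- it reuses machinery already built for Lemma \ref{l:vert} and needs no cardinality argument or boundary case-checking; what yours buys is a purely local, order-theoretic criterion that avoids a second appeal to the asymptotic Lemma \ref{l:main1}. One small point to tighten when writing it up: in the non-horizontal case two distinct spanning edges can share an endpoint on one line, so you should restrict your triples to edges with pairwise distinct endpoints on each of $V_x$ and $V_y$ (or treat shared endpoints as a separate distinguishing feature) for the betweenness pattern to be well defined.
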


\begin{proof}
    Note that $\rho_{1}(\pr(x),\pr(y)) = 1$ if and only if the segment $[x,y]$ is horizontal. On the other hand, $\rho_{1}(\pr(x),\pr(y)) = 1$ if and only if there are no such points $x',y'$ that $\rho_{1}(x',y')=1$ and the segment $[\pr(x'),\pr(y')]$ contains the points $\pr(x),\pr(y)$ strictly in the interior, see Fig. \ref{fig:horizontal}.

Therefore, $[x,y]$ is horizontal if and only if there are no such points $x',y'$ that $\rho_{1}(x',y')=1$ and the segment $[\pr(x'),\pr(y')]$ contains the points $\pr(x),\pr(y)$ strictly in the interior. The analogous statement also holds for $f(x),f(y)$.
    
    Since $f$ is an isomorphism of graphs, we have $\rho_{1}(x',y')=1$ if and only if $\rho_{1}(f(x'),f(y'))=1$. Due to Lemma \ref{l:main1} applied to $x',x,y'$ and applied to $x',y,y'$, the points $\pr(x), \pr(y)$ lie strictly in the interior of the segment $[\pr(x'),\pr(y')]$ if and only if the points $\pr(f(x)),\pr(f(y))$ lie strictly in the interior of the segment $[\pr(f(x')),\pr(f(y'))]$.
    
    So, the obtained criterion that the segment $[x,y]$ is horizontal is equivalent to the analogous criterion that the segment $[f(x),f(y)]$ is horizontal.
\end{proof}

\begin{figure}[ht]
\center{\includegraphics[scale=0.5, width=300pt]{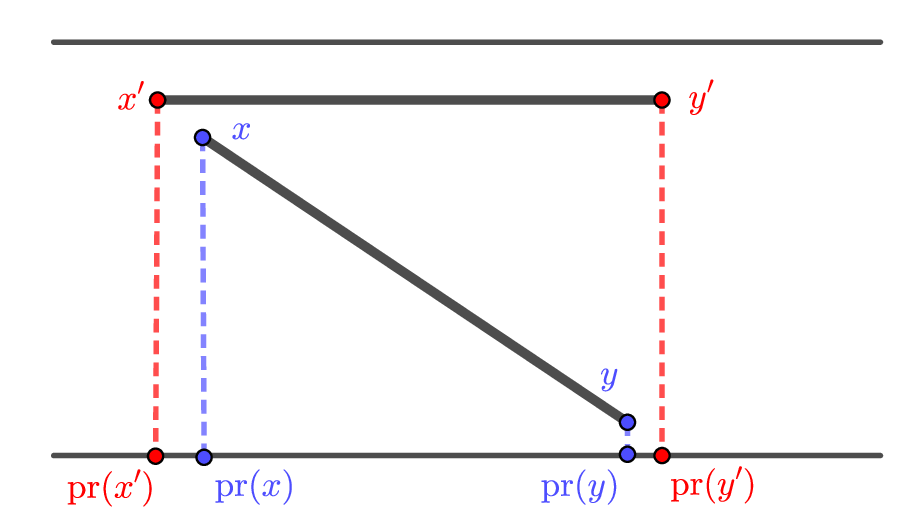}}
\caption{The segment $[x,y]$ of length $1$ is horizontal if and only if there are no such points $x',y'$ that $\rho_{G_1}(x',y')=1$ and the segment $[\pr(x'),\pr(y')]$ contains the points $\pr(x),\pr(y)$ strictly in the interior}
\label{fig:horizontal}
\end{figure}

Now we are ready to introduce a construction, that differs the unit distance graphs of $L_1$ and $L_2$ in the case $\eps_1, \eps_2 < 1$.

Consider $N, M \in \N$, such that $M > N$. Consider different points $a_0, a_1,...,a_N$, different points $b_0, b_1,... b_M$, and different points $c_1, c_2,...,c_M $ in $\R\times[0,\eps]$, satisfying the following properties.

(i) $a_0, ...,a_N$ lie on the same horizontal line $l$;

(ii) $||a_0-a_1||_2=||a_1-a_2||_2=....=||a_{N-1} - a_N||_2=1$;

(iii) $a_0=b_0, a_N=b_M$;

(iv) $||b_0-c_1||_2=||c_1-b_1||_2=||b_1-c_2||_2=...=||c_N - b_N||_2=1$.

(v) For $i = 1,2,...,M-1$ the projections $\pr(b_i), \pr(c_i)$, and the projection $\pr(c_M)$ lie strictly in the interior of the segment $[a_0, a_N]$;

(vi) For $i = 1,2,...,M-1$, the projection $\pr(b_i)$ is strictly between $\pr(b_{i-1})$  and $\pr(b_{i+1})$. For $i = 1,2,...,1$, the projection $\pr(c_i)$ is strictly between $\pr(b_{i-1})$ and $\pr(b_{i})$.

We say that such points $a_0, ..., a_N, b_0,...,b_M, c_0,...c_M$ form a \textit{$(N,M)$-comb}, see Fig. \ref{fig:comb} (we do not require that the points $b_0,b_1,...b_M$ lie on the same line and that the points $c_0,c_1,...,c_M$ lie on the same line, but this case is extreme).

\begin{figure}[ht]
\center{\includegraphics[scale=0.5, width=400pt]{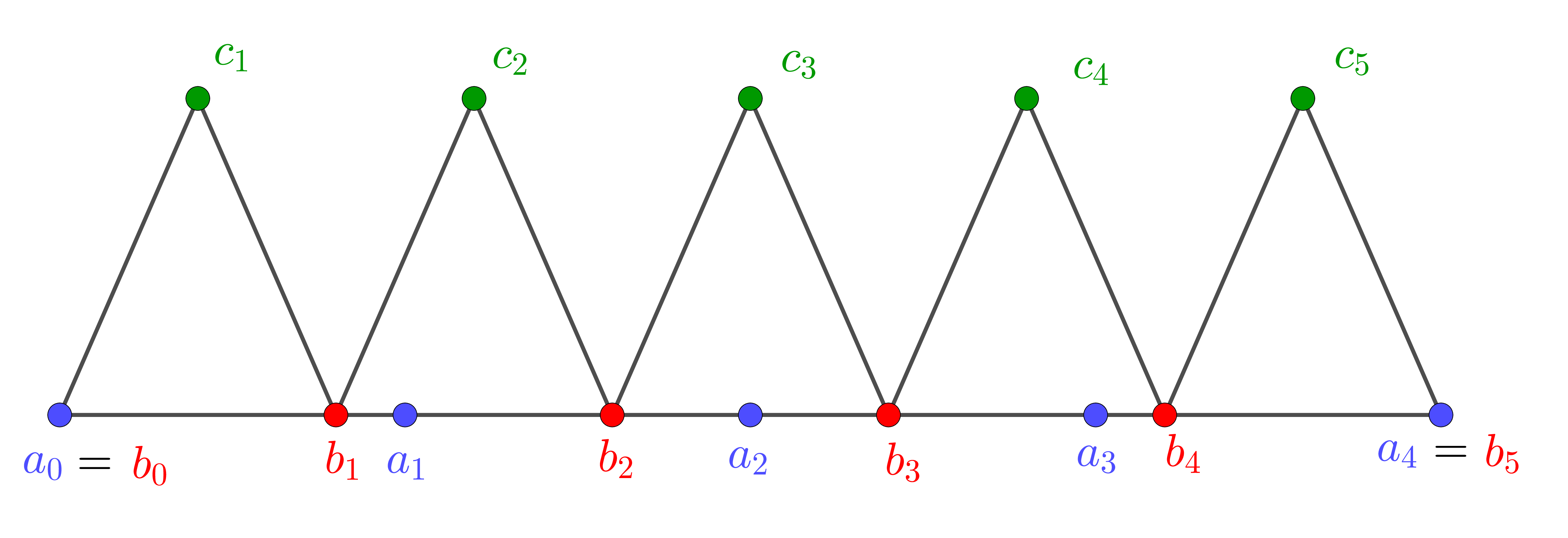}}
\caption{The extreme case of the $(4,5)$-comb}
\label{fig:comb}
\end{figure}

\begin{proposition}\label{prop:comb}
    Let $0 < \eps < 1$. Then the layer $\R \times [0,\eps]$ contains a $(N,M)$-comb if and only if $\eps \geq \sqrt{1-\frac{N^2}{4M^2}}$.
\end{proposition}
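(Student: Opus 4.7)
The plan is to prove the two directions separately; the entire argument reduces to tracking the horizontal and vertical components of the $2M$ unit edges of the zigzag $b_0, c_1, b_1, c_2, \ldots, c_M, b_M$.

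For necessity ($\Rightarrow$), I would first observe that condition (vi), applied at each intermediate index, forces the sequence $\pr(b_0),\pr(b_1),\dots,\pr(b_M)$ to be strictly monotone, and since $\pr(b_0)=\pr(a_0)<\pr(a_N)=\pr(b_M)$, it is strictly increasing; moreover each $\pr(c_i)$ lies strictly between $\pr(b_{i-1})$ and $\pr(b_i)$. Hence every one of the $2M$ edges of the zigzag has a strictly positive horizontal component, and the sum of these horizontal components equals $\pr(b_M)-\pr(b_0)=N$. On the other hand, since both endpoints of each edge lie in $\R\times[0,\eps]$, the vertical component of each edge is at most $\eps$ in absolute value; because the edge has length exactly $1$, its horizontal component is therefore at least $\sqrt{1-\eps^2}$. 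Summing over the $2M$ edges yields $N\geq 2M\sqrt{1-\eps^2}$, which rearranges directly to $\eps\geq\sqrt{1-N^2/(4M^2)}$.

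For sufficiency ($\Leftarrow$), I would exhibit an explicit extreme $(N,M)$-comb realizing the bound. Set $v=\sqrt{1-N^2/(4M^2)}$, and place
$a_i=(i,0)$ for $0\leq i\leq N$, $b_i=(iN/M,\,0)$ for $0\leq i\leq M$, and $c_i=\bigl((2i-1)N/(2M),\,v\bigr)$ for $1\leq i\leq M$. The hypothesis $\eps\geq v$ guarantees all these points lie in $\R\times[0,\eps]$. Every edge in (ii) has length $1$ trivially, while every edge of (iv) connects a point of height $0$ and a point of height $v$ whose projections differ by exactly $N/(2M)$, giving length $\sqrt{(N/(2M))^2+v^2}=1$. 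The remaining conditions (i), (iii), (v), (vi) are immediate from the explicit coordinates (the projections $\pr(b_i)=iN/M$ and $\pr(c_i)=(2i-1)N/(2M)$ are strictly increasing and interlace as required).

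I do not expect a genuine obstacle: the only mildly subtle step is extracting strict monotonicity of $\pr(b_0),\ldots,\pr(b_M)$ from the pairwise ``strictly between'' formulation of (vi), but once this is noted the rest is a direct length/projection computation. The sharpness of the inequality comes precisely from the fact that, in the extremal comb, every edge of the zigzag uses the full vertical width $\eps$, so that no horizontal budget is wasted.
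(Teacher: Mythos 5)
Your proof is correct, and the necessity direction takes a genuinely different route from the paper's. For sufficiency both arguments are identical: the same extremal comb with the $b_i$ equally spaced on the base line and the $c_i$ at height $\sqrt{1-N^2/(4M^2)}$. For necessity, the paper applies the pigeonhole principle to the $M$ gaps between consecutive projections $\pr(b_i)$ to isolate one triangle $b_i b_{i+1} c_{i+1}$ whose base projects onto a segment of length at most $N/M$, and then bounds the height of that single triangle by enclosing it in a rectangle and a short diameter argument. You instead argue globally: condition (vi) forces $\pr(b_0),\dots,\pr(b_M)$ to be strictly monotone, hence each of the $2M$ unit edges of the zigzag has a strictly positive horizontal component; these components telescope to $N$, while each is individually at least $\sqrt{1-\eps^2}$, giving $N\geq 2M\sqrt{1-\eps^2}$ in one line. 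Your version is shorter and dispenses with the auxiliary rectangle entirely; the price is that you must establish monotonicity of the $\pr(b_i)$ (so that the signed horizontal displacements do not cancel), which you correctly flag and which does follow from the pairwise ``strictly between'' condition together with $\pr(b_0)\neq\pr(b_M)$. Both arguments produce the same sharp constant.
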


\begin{proof}
Let $a_0, ..., a_N, b_0,...,b_M, c_0,...c_M \in \R\times[0,\eps]$ form a comb.
The segment $[a_0,a_N]$ of this comb has length $N$. The points $\pr(b_1),\pr(b_2)$,$...$, $\pr(b_{M-1})$ divide this segment into $M$ parts. Then for at least one number $i\in [M-1]$ we have $||\pr(b_i)- \pr(b_{i+1})||_2 \leq \frac{N}{M}$. So, the triangle $b_i b_{i+1} c_{i+1}$ has two sides of length $1$ and is contained in a rectangle with length of the vertical sides $\eps$, and length of the horizontal sides at most $\frac{N}{M}$. We will find the minimum value of $\eps$ at which it is possible.

Let this rectangle be $ABCD$, and let $||A-B||_2=\eps, ||B-C||_2 \leq \frac{N}{M}$, $b_{i-1} \in AB, b_{i+1} \in CD$. By a parallel transport, we can assume without loss of generality that $b_{i-1} = A$ and $c_i$ is in the polygon $ABCb_i$. Denote by $E$ and $F$ the projections of $c_i$ onto $CD$ and $AD$, respectively; see Fig. \ref{fig:rectangle}. Then $$||D-c_i||_2 \geq \text{diam}\,EDFc_i \geq ||b_i-c_i||_2 = 1,$$ 
because $EDFc_i$ is a rectangle. Therefore,
$$||A-c_i||_2= 1, ||D-c_i||_2 \geq 1;$$
$$\min(||A-F||_2, ||F-D||_2) \leq \frac{N}{2M};$$
$$\eps = ||A-B||_2 \geq ||c_i-F||_2 \geq \sqrt{1-\frac{N^2}{4M^2}}.$$
On the other hand, obviously, the estimate $\eps = \sqrt{1-\frac{N^2}{4M^2}}$ is sufficient for the "extreme" case of comb, when $b_0,b_1,...b_M$ lies on the line $l$ and divide it into equal parts, see Fig. \ref{fig:comb}. Hence, this estimate is tight.
\end{proof}

\begin{figure}[ht]
\center{\includegraphics[scale=0.5, width=170pt]{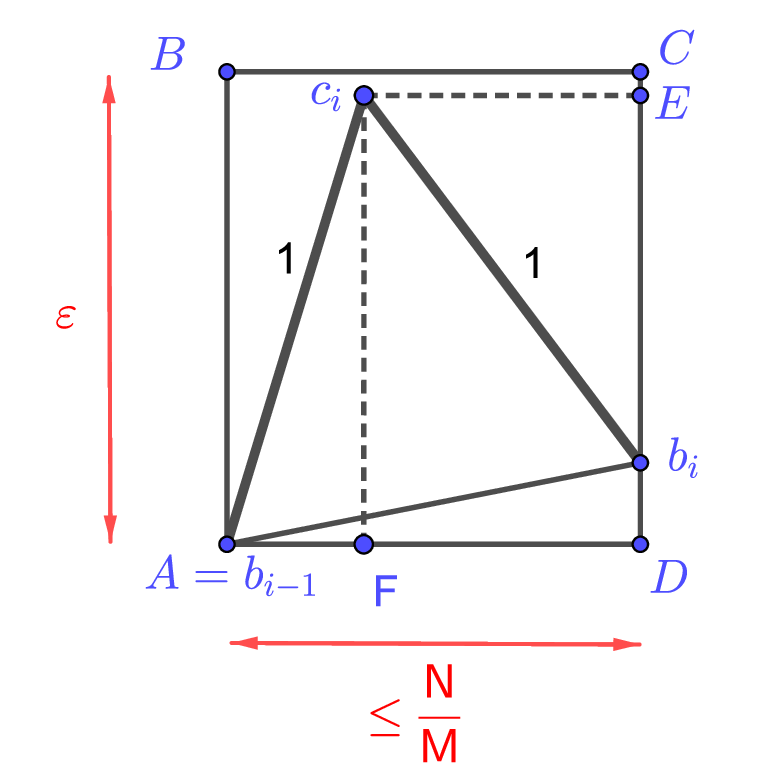}}
\caption{The rectangle $ABCD$ containing the triangle $b_{i-1}b_ic_i$.}
\label{fig:rectangle}
\end{figure}

\begin{lemma}\label{l:eps_less_one}
    Let $\eps_1, \eps_2 < 1$. Then $L_1$ contains a $(N,M)$-comb if and only if $L_2$ contains a $(N,M)$-comb.
\end{lemma}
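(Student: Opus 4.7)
The plan is to show that the isomorphism $f\colon L_1 \to L_2$ carries any $(N,M)$-comb in $L_1$ to an $(N,M)$-comb in $L_2$; the reverse implication then follows by applying the same argument to $f^{-1}$, which is also an isomorphism of unit distance graphs $G_2 \to G_1$. Given a comb $a_0, \ldots, a_N, b_0, \ldots, b_M, c_1, \ldots, c_M$ in $L_1$, I would verify each of the six defining properties (i)--(vi) for the image points $f(a_i), f(b_j), f(c_k)$ in $L_2$.

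First I would dispose of the easy conditions. The unit distance equalities (ii) and (iv) transfer because $f$ preserves adjacency in the unit distance graph; the shared endpoints in (iii) transfer because $f$ is a function; distinctness of the image points follows from $f$ being a bijection. For condition (i), I apply Lemma \ref{l:horizontal} to each consecutive pair $(a_i,a_{i+1})$: it is a horizontal unit-distance pair, so its image $(f(a_i),f(a_{i+1}))$ is also horizontal. Since consecutive image pairs share an endpoint, the points $f(a_0), \ldots, f(a_N)$ lie on one common horizontal line in $L_2$.

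The substantive step is conditions (v) and (vi), which encode betweenness of horizontal projections. Here I would invoke Lemma \ref{l:main1}: its condition $(\Gamma)$ is phrased purely in terms of the graph metrics $\rho_i$ and is automatically preserved by $f$, and by the equivalence $(\Omega)\Leftrightarrow(\Gamma)$ the geometric statement that $\pr(y)$ lies strictly in the interior of $[\pr(x'),\pr(x'')]$ is also preserved by $f$. I would apply this preservation to the triples $(a_0,a_N,b_i)$, $(a_0,a_N,c_i)$, $(a_0,a_N,c_M)$ to transfer (v), and to $(b_{i-1},b_{i+1},b_i)$ together with $(b_{i-1},b_i,c_i)$ to transfer (vi).

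I do not expect a real obstacle: the heavy lifting is already carried out in Lemmas \ref{l:main1} and \ref{l:horizontal}. The only care required is to check that every clause in the definition of an $(N,M)$-comb has already been reformulated as an $f$-invariant property — a unit distance, a horizontal collinearity of unit-distance pairs, or a projection-betweenness — and the three steps above simply match each clause with the appropriate invariant.
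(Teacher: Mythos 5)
Your proposal is correct and follows essentially the same route as the paper: transfer (ii)--(iv) by adjacency preservation, (i) by Lemma \ref{l:horizontal}, and the projection-betweenness clauses (v)--(vi) via the $f$-invariance supplied by Lemma \ref{l:main1}. The paper's own one-paragraph proof cites Lemma \ref{l:vert} for (v)--(vi), but the betweenness statements are exactly condition $(\Omega)$ of Lemma \ref{l:main1}, so your citation is if anything the more precise one; there is no substantive difference in approach.
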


\begin{proof}
    Let $a_0, ..., a_N, b_0,...,b_M, c_0,...c_M \in L_1$ form a comb. The properties (ii)-(iv) hold for $f(a_0),...,f(a_N)$,$f(b_0),...,f(b_N)$, $f(c_1)$,$...$,$f(c_M)$, because $f$ is an isomorphism of unit distance graphs. The property (i) holds due to Lemma \ref{l:horizontal}. The properties (v)-(vi) hold due to Lemma \ref{l:vert}. Hence, $f(a_0),...,f(a_N)$,$f(b_0),...,f(b_N)$,$f(c_1)$,$...$, $f(c_M)\in L_2$ form $(N,M)$-comb.
\end{proof}

In the case where $\eps_1 \geq 1$ or $\eps_2 \geq 1$, we also use an additional construction to reduce the problem to the comparison of the fractional parts of $\eps_1$ and $\eps_2$. For $m \in \N$, consider the set $\{(x,y) : x \in \Z, y \in \{0,1,\dots,m\}\}$ in $\R^2$. Let us call $m$-sandwich any set obtained from the given set by a shift by a real vector, as is shown on Fig. \ref{fig:sandwich}. We say that $x$ is a \textit{border point} of a $m$-sandwich $S$, if $x \in S$, and $x$ does not lie between any other two points of $S$ that form a vertical segment.

\begin{figure}[ht]
\center{\includegraphics[scale=0.5, width=250pt]{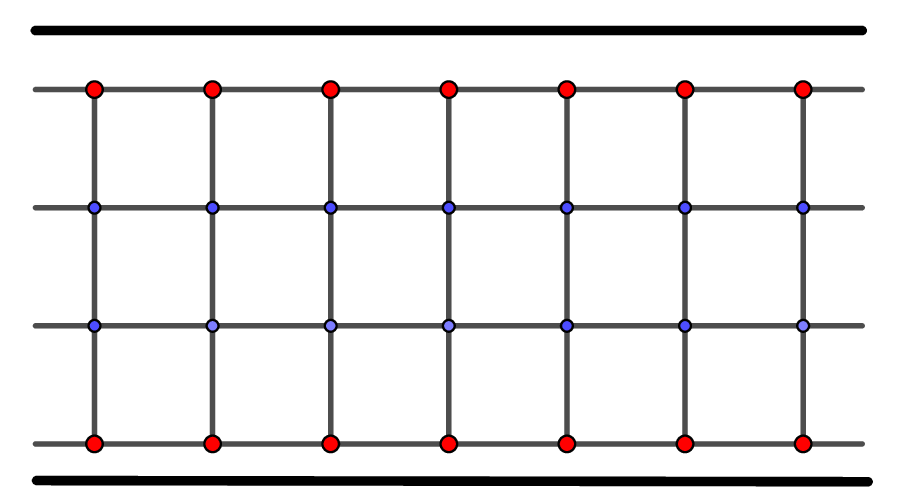}}
\caption{An example of $3$-sandwich. The border points are colored red.}
\label{fig:sandwich}
\end{figure}

By a corollary of Lemmas \ref{l:vert}, \ref{l:vert_xyz}, \ref{l:horizontal}, and Corollary \ref{cor:boundary}, we immediately get the following observation.

\begin{observation}\label{obs:sandwich}
Let $S \subset L_1$ be a $m$-sandwich. Then $f(S) \subset L_2$ also is a $m$-sandwich, and the mutual arrangement of the points of $L_2$ in the horizontal rows and vertical columns is the same in the following sense. If $x,y,z \in S$ are different points, $x \in [y,z]$, and the segment $[y,z]$ is vertical (horizontal), then $f(x) \in [f(y), f(z)]$, and the segment $[f(y),f(z)]$ is also vertical (horizontal). In particular, if $x$ is a border point of $S$, then $f(x)$ is a border point of $f(S)$.
\end{observation}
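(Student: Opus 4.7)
The approach is to decompose the $m$-sandwich $S$ into its vertical columns and horizontal rows, apply the previously established lemmas to each, and assemble the results into the grid structure claimed for $f(S)$. Index the points of $S$ as $p_{ij}$ with $i \in \Z$ and $j \in \{0, 1, \dots, m\}$, where $p_{i,j+1}$ sits one unit above $p_{ij}$ and $p_{i+1,j}$ sits one unit to the right. I would first treat each vertical column $C_i = \{p_{ij} : j = 0, \dots, m\}$: Lemma \ref{l:vert} sends $C_i$ onto a single vertical line of $L_2$, so all $f(p_{ij})$ with fixed $i$ share a common first coordinate $\xi_i$; Lemma \ref{l:vert_xyz} preserves the vertical order, and unit-distance preservation forces consecutive images at vertical distance $1$, so the second coordinates of $f(p_{i,0}), \dots, f(p_{i,m})$ form an arithmetic progression with step $1$ in increasing order.

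Next I would analyze the horizontal rows $R_j = \{p_{ij} : i \in \Z\}$. By Lemma \ref{l:horizontal} each consecutive pair $f(p_{ij}), f(p_{i+1,j})$ lies on a common horizontal line; chaining overlapping pairs through their shared endpoint $f(p_{i+1,j})$ forces all of $f(R_j)$ onto a single horizontal line, so all $f(p_{ij})$ with fixed $j$ share a common second coordinate $\eta_j$. Combined with the column analysis, one obtains $f(p_{ij}) = (\xi_i, \eta_j)$ with $\eta_j = \eta_0 + j$. Unit horizontal distance along $R_j$ gives $|\xi_{i+1} - \xi_i| = 1$; a sign flip between two consecutive differences would yield $\xi_i = \xi_{i+2}$, hence $f(p_{ij}) = f(p_{i+2,j})$, contradicting injectivity of $f$. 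Therefore the sign is globally constant, $\xi_i = \xi_0 \pm i$, and $f(S) = \{(\xi_0 \pm i,\, \eta_0 + j) : i \in \Z,\, j = 0, \dots, m\}$, which is an $m$-sandwich in $L_2$.

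The remaining statements now follow immediately. The preservation of vertical arrangement is Lemma \ref{l:vert_xyz} applied to the relevant triple. The preservation of horizontal arrangement follows from the monotonicity $\xi_i = \xi_0 \pm i$ in $i$ established above: if $p_{i_1,j}, p_{i,j}, p_{i_2,j}$ are collinear horizontally with $p_{i,j} \in [p_{i_1,j}, p_{i_2,j}]$, then $i$ lies between $i_1$ and $i_2$, and the corresponding relation holds for the images. Finally, a border point $x \in S$ lies at the top or bottom of its vertical column $C_i$, so by the vertical order-preservation of $f|_{C_i}$ its image is the top or bottom of the image column, hence a border point of $f(S)$; one also uses Corollary \ref{cor:boundary} implicitly to verify that the image column fits inside $L_2$. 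I expect the only step requiring any care to be ruling out the horizontal ``folding'' in the row analysis; since injectivity of $f$ makes this immediate, the observation thereby reduces, as the author notes, to a direct assembly of the cited lemmas and corollary.
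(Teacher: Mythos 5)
Your proposal is correct and follows essentially the same route as the paper, which gives no explicit proof but simply asserts that the observation is an immediate corollary of Lemmas \ref{l:vert}, \ref{l:vert_xyz}, \ref{l:horizontal} and Corollary \ref{cor:boundary}; your column/row decomposition, the chaining of horizontal unit segments through shared endpoints, and the injectivity argument ruling out a ``fold'' are exactly the details the author leaves implicit. The only cosmetic point is that Lemma \ref{l:vert_xyz} gives betweenness (not orientation) preservation, so the image column/row could be reversed, but this does not affect any of the claims of the observation.
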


Denote by $m_i$ and $\delta_i$ the integer part and the fractional part of $\eps_i$, respectively:
$$m_i = \lfloor\eps_i \rfloor,\; \delta_i = \eps_i - m_i.$$
Consider the sets
$$\Delta_i = \{x = (x_1,x_2) \in L_i:\; x_2 \in [0, \delta_i)\cup(m_i, \eps_i]\}.$$

\begin{lemma}\label{l:delta}
    If $x \in \Delta_1$, then $f(x) \in \Delta_2$.
\end{lemma}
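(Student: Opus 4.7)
The plan is to characterise $\Delta_1$ via $m$-sandwiches and push the characterisation through $f$ using Observation~\ref{obs:sandwich} together with Corollary~\ref{cor:boundary}. First, $L_i$ contains an $m$-sandwich exactly when $m \le m_i$. Since $f$ and $f^{-1}$ both map $m$-sandwiches to $m$-sandwiches by Observation~\ref{obs:sandwich}, it follows that $m_1 = m_2$; denote this common value by $m$. If $m = 0$ then $\delta_i = \eps_i$ and $\Delta_i = L_i$, so the lemma is trivial; in what follows I assume $m \ge 1$.

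Fix $x = (x_1, x_2) \in \Delta_1$. By definition either $x_2 \in [0, \delta_1)$ or $x_2 \in (m, \eps_1]$; the two cases are symmetric, so I treat the first. Consider the $m$-sandwich
$$S = \{(x_1 + k,\; x_2 + j) : k \in \Z,\ j \in \{0, 1, \ldots, m\}\},$$
which lies in $L_1$ because $x_2 + m < \delta_1 + m = \eps_1$. The point $x$ is in the bottom row of $S$ and so is a border point; the top row $T = \{(x_1 + k, x_2 + m) : k \in \Z\}$ has $y$-coordinate $x_2 + m \in (0, \eps_1)$, hence $T$ lies strictly in the interior of $L_1$.

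Now apply $f$. By Observation~\ref{obs:sandwich}, $f(S)$ is an $m$-sandwich in $L_2$ and $f(x)$ is a border point of it. Applying Corollary~\ref{cor:boundary} to both $f$ and $f^{-1}$ shows that $f$ maps $\R \times (0, \eps_1)$ into $\R \times (0, \eps_2)$, so the horizontal row $f(T)$ lies in $\R \times (0, \eps_2)$; let $y'_T \in (0, \eps_2)$ be its $y$-coordinate. Let $y'$ be the $y$-coordinate of $f(x)$. The row of $f(S)$ through $f(x)$ is the border row opposite to $f(T)$, so $|y' - y'_T| = m$. Hence either $y' = y'_T - m$, in which case $y' \in [0, \eps_2 - m) = [0, \delta_2)$, or $y' = y'_T + m$, in which case $y' \in (m, \eps_2] = (m_2, \eps_2]$. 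In both cases $f(x) \in \Delta_2$.

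The case $x_2 \in (m, \eps_1]$ is handled by the mirror-image sandwich with $x$ in the top row and the bottom row at $y$-coordinate $x_2 - m \in (0, \eps_1)$. The main delicate point is that Observation~\ref{obs:sandwich} preserves betweenness along vertical columns but not orientation, so it is only known that $f(x)$ lies in one of the two border rows of $f(S)$; this is harmless because $\Delta_2$ is defined symmetrically at the top and the bottom, and both possibilities for the row containing $f(x)$ produce the same conclusion.
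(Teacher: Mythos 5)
Your proof is correct and takes essentially the same route as the paper: characterise membership in $\Delta_1$ by exhibiting an $m$-sandwich having $x$ as a border point with the opposite border row inside the open strip, then transport this through $f$ using Observation~\ref{obs:sandwich} and Corollary~\ref{cor:boundary}. Your write-up is somewhat more careful at the boundary cases (e.g.\ $x_2=0$, where the paper's stated characterisation via sandwiches avoiding $\R\times\{0,\eps_i\}$ is literally too restrictive) and makes explicit the preliminary fact $m_1=m_2$, which the paper instead disposes of in Case~I of the main argument.
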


\begin{proof}
    Note that $x \in \Delta_i$ if and only if $x$ is a border point of a $m_1$-sandwich in $L_1$,  such that all the points of this sandwich are not contained in $\R\times\{0,\eps_i\}$. So, it is enough to apply Corollary \ref{cor:boundary} and Observation \ref{obs:sandwich}.
\end{proof}

We say that points $a_0,a_1,...,a_N, b_0,b_1,...,b_M, c_1,...,c_M \in L_i$ form a \textit{modified $(N,M)$-comb} if they are contained in $\Delta_i$, they satisfy the statements (i)-(vi) of the definition of $(N,M)$-comb, and $c_1,...,c_M \in \R \times \{0,\eps_i\}$.

\begin{lemma}\label{l:planar_end}
    Let $a_0,a_1,...,a_N, b_0,b_1,...,b_M, c_1,...,c_M \in \Delta_i$ form a modified $(N,M)$-comb. Then

    $(\alpha)$ all these points lie in $\R\times[0,\delta_1]$ or all these points lie in $\R\times[m_1,\eps_1]$;

    $(\beta)$ the points $f(a_0),...,f(a_N),f(b_0),...,f(b_N),f(c_1),...,f(c_M) \in L_2$ satisfy the properties (i)-(vi), and $f(c_1),...,f(c_M) \in \R \times \{0,\eps_2\}$;

     $(\gamma)$ all the points $f(a_0),...,f(a_N),f(b_0),...,f(b_N),f(c_1),...,f(c_M)$ lie in $\R\times[0,\delta_2]$ or all these points lie in $\R\times[m_2,\eps_2]$.
\end{lemma}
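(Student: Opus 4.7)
The plan is to prove the three sub-statements in the natural order: establish $(\alpha)$ by a direct geometric argument based on the heights of the two strips composing $\Delta_i$, then derive $(\beta)$ by collecting the preservation lemmas already proved, and finally obtain $(\gamma)$ by applying $(\alpha)$ to the image configuration inside $L_2$.

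For $(\alpha)$, I would start from the observation that $\Delta_i$ is the disjoint union of a bottom strip $\R\times[0,\delta_i)$ and a top strip $\R\times(m_i,\eps_i]$. In the regime of interest $\eps_i \geq 1$, so $m_i \geq 1$, and every point of the top strip has $y$-coordinate strictly greater than $1$. This means that a unit edge cannot connect a point on the boundary line $\R\times\{0\}$ to a point of the top strip, nor a point on $\R\times\{\eps_i\}$ to a point of the bottom strip. Since each $b_j$ is joined by a unit edge to at least one $c_k\in\R\times\{0,\eps_i\}$, this dichotomy forces the strip containing $b_j$ to match the boundary component containing $c_k$. I would then propagate this matching along the connected zig-zag $b_0 c_1 b_1 c_2 \cdots c_M b_M$ to conclude that all $b_j$'s lie in the same strip and all $c_k$'s lie on the same boundary line. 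Finally, since $a_0=b_0$ and the $a_j$'s lie on a common horizontal line which is wholly contained in one of the two parts of $\Delta_1$, this common strip is determined and $(\alpha)$ follows.

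For $(\beta)$, the requirements decompose cleanly across the preservation results established earlier. Properties (ii), (iii), and (iv) involve only unit distances and vertex equalities, hence transfer under the graph isomorphism $f$ automatically. Property (i) is inherited from Lemma \ref{l:horizontal} applied to each consecutive pair $a_j,a_{j+1}$. Properties (v) and (vi) each have the form ``$\pr(y)$ lies strictly inside the segment $[\pr(x'),\pr(x'')]$'', which is precisely condition $(\Omega)$ in Lemma \ref{l:main1}, equivalent to the purely graph-theoretic condition $(\Gamma)$ and hence preserved by $f$. The remaining claim $f(c_k)\in\R\times\{0,\eps_2\}$ is an immediate application of Corollary \ref{cor:boundary}. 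For $(\gamma)$, Lemma \ref{l:delta} places each image in $\Delta_2$, so combined with $(\beta)$ the images $f(a_j), f(b_j), f(c_k)$ form a modified $(N,M)$-comb inside $L_2$, and applying $(\alpha)$ to this new comb yields the required dichotomy.

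The main obstacle I anticipate is the case analysis in $(\alpha)$: the two parts of $\Delta_i$ are nearly symmetric but have opposite open/closed boundary conventions, and I must be careful to track which $c_k$'s live on $y=0$ versus $y=\eps_i$ consistently along the zig-zag. The hypothesis $\eps_i\geq 1$ is essential here, because if $m_i=0$ the vertical gap between the top and bottom parts of $\Delta_i$ collapses and the argument forcing $b_j$ onto a prescribed side fails.
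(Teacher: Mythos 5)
Your proof is correct and follows essentially the same route as the paper: $(\alpha)$ by propagating the unit-distance dichotomy between the two components of $\Delta_i$ along the zig-zag $b_0c_1b_1\cdots c_Mb_M$ and then transferring it to the $a_j$'s via the common horizontal line, $(\beta)$ by assembling the earlier preservation lemmas, and $(\gamma)$ by re-running the $(\alpha)$-argument on the image comb. Your explicit appeals to Lemma \ref{l:main1} for properties (v)--(vi) and to Lemma \ref{l:delta} to place the images in $\Delta_2$ before applying $(\alpha)$ are in fact slightly more precise than the paper's own citations, but the argument is the same.
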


\begin{proof}
    $(\alpha)$. 
    For definiteness, let $c_1 \in \R\times\{0\}$.
    
    For any point $t \in  \R\times[m_1,\eps_1]$, we have $||t-c_1||_2 > 1$. Since $||b_0 - c_1||_2 = ||b_1-c_1||_2=1$ and $b_0, b_1 \in \Delta_1$, we get $b_0,b_1 \in \R\times[0,\delta_1]$.

    For any point $t \in  \R\times\{\eps_1\}$, we have $||t-b_1||_2 > 1$. Since $||b_1 - c_2||_2 = 1$, and $c_2 \in \R\times\{0,\eps_1\}$, we get $c_2 \in \R\times\{0\}$.

    Thus, by induction on $j$ we can analogously show that all the points $b_j,c_j$ are in $\R\times\{0\}$.

    Then all the points $a_0,a_1,...,a_N$ lie in $\R\times\{0\}$, because they are on the same horizontal line.

    $(\beta)$. The properties (ii),(iii),(iv) hold for $f(a_0),...,f(a_N)$,$f(b_0),...,f(b_N)$, $f(c_1)$,$...$,$f(c_M)$, because $f$ is an isomorphism of unit distance graphs. The property (i) holds due to Lemma \ref{l:horizontal}. The properties (v)-(vi) hold due to Lemma \ref{l:vert}. The property $f(c_1),...,f(c_M) \in \R \times \{0,\eps_2\}$ holds due to Corollary \ref{cor:boundary}.

    $(\gamma)$ Due to $(\beta)$, the proof is the same as the proof of $(\alpha)$.
\end{proof}

Since the widths of the strip $\R\times[0,\delta_1]$ and $\R\times[m_1,\eps_1]$ are equal to $\delta_1$, Lemma \ref{l:planar_end} and Proposition \ref{prop:comb} immediately give the following corollaries.

\begin{corollary}\label{cor:planar_end_1}
    Let $m_i \geq 1, 0 < \delta_i$. Then $L_i$ contains a modified $(N,M)$-comb if and only if $\delta_i \geq \sqrt{1-\frac{N^2}{4M^2}}$.
\end{corollary}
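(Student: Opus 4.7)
The plan is to derive both implications directly from Lemma~\ref{l:planar_end}$(\alpha)$ and Proposition~\ref{prop:comb}, each of which supplies one direction essentially for free.

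For the necessity direction, I would assume that $L_i$ contains a modified $(N,M)$-comb and apply Lemma~\ref{l:planar_end}$(\alpha)$ to confine all of its points to one of the two horizontal substrips $\R\times[0,\delta_i]$ or $\R\times[m_i,\eps_i]$, each of width $\delta_i$. After a vertical translation (and, if needed, a reflection) identifying this substrip with $\R\times[0,\delta_i]$, the configuration is in particular an ordinary $(N,M)$-comb, so Proposition~\ref{prop:comb} applied to this strip forces $\delta_i\geq\sqrt{1-N^{2}/(4M^{2})}$.

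For the sufficiency direction, assume $\delta_i\geq\sqrt{1-N^{2}/(4M^{2})}$ and build a modified comb explicitly from the extreme configuration shown in Figure~\ref{fig:comb}. Place the $c_j$'s on the boundary line $\R\times\{0\}$ of $L_i$ with consecutive horizontal spacing $N/M$, and place $a_0,\dots,a_N$ and $b_0,\dots,b_M$ on the parallel line at height $h=\sqrt{1-N^{2}/(4M^{2})}\leq\delta_i$, equally spaced so that consecutive $a$'s are at unit distance, consecutive $b$'s at distance $N/M$, and each $c_j$ sits directly below the midpoint of $[b_{j-1},b_j]$. Then properties (i)--(vi) hold by construction, the boundary condition $c_1,\dots,c_M\in\R\times\{0,\eps_i\}$ is satisfied, and the $a_j,b_j$'s lie at height $h\in[0,\delta_i)$, hence in $\Delta_i$, so the configuration is a modified $(N,M)$-comb.

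The main obstacle I anticipate is the borderline case $\delta_i=\sqrt{1-N^{2}/(4M^{2})}$, in which the extreme comb's $b$-row lands exactly on the line of height $\delta_i$, a line \emph{excluded} from $\Delta_i$ by its open endpoint. I would handle this either by running the symmetric construction inside the upper substrip $\R\times[m_i,\eps_i]$, placing the $c_j$'s on $\R\times\{\eps_i\}$ and the $a,b$-row on the line $\R\times\{m_i+\eps_i-h\}$, or by exploiting the strict interior conditions (v)--(vi) to allow a tiny perturbation of the horizontal positions that shifts the $b_j$'s to a height strictly inside $[0,\delta_i)$. Once this boundary case is settled, the rest of the verification is routine bookkeeping inherited from the proofs of Lemma~\ref{l:planar_end} and Proposition~\ref{prop:comb}.
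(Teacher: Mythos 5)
Your argument follows the paper's route exactly: the paper derives this corollary in one sentence from Lemma~\ref{l:planar_end}$(\alpha)$ (the comb lies in one of the two substrips of width $\delta_i$) together with Proposition~\ref{prop:comb}, which is precisely your necessity direction, and the sufficiency is the extreme construction of Figure~\ref{fig:comb} placed inside one substrip. So the core of your proposal is fine and matches the intended proof.

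The one substantive issue is the borderline case you flagged, and here neither of your two patches actually works. The set $\Delta_i$ is defined with \emph{both} components half-open at their interior ends, $[0,\delta_i)\cup(m_i,\eps_i]$, so running the construction in the upper substrip with the $c_j$'s on $\R\times\{\eps_i\}$ puts the $a,b$-row at height $\eps_i-h=m_i$ when $h=\delta_i$, which is again excluded. The perturbation idea also cannot succeed: your own necessity argument (via Proposition~\ref{prop:comb}) shows that in any modified comb some $b_j$ must sit at height at least $\sqrt{1-N^2/(4M^2)}$ above the boundary line carrying the $c_j$'s, so no rearrangement can push every $b_j$ strictly below that height. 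The honest conclusion is that, with $\Delta_i$ as defined, the corollary holds with strict inequality $\delta_i>\sqrt{1-N^2/(4M^2)}$ and fails at equality; this is an imprecision in the paper's statement rather than in your reasoning, and it is harmless for Theorem~\ref{t:planar}, since Case~III of that proof only ever invokes the corollary with $\delta_1>\sqrt{1-N^2/(4M^2)}>\delta_2$, both strict. You should either prove the strict version or note that the closure of the substrip is what Proposition~\ref{prop:comb} actually controls.
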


\begin{corollary}\label{cor:planar_end_2}
    Let $m_1 = m_2 \geq 1$. Then $L_1$ contains a modified $(N,M)$-comb if and only if $L_2$ contains a modified $(N,M)$-comb.
\end{corollary}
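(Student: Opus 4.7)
The plan is to derive this corollary immediately from Lemma \ref{l:planar_end}($\beta$) together with Lemma \ref{l:delta}, applying each direction symmetrically via $f$ and $f^{-1}$. I would treat only the forward implication and invoke the same argument for $f^{-1}: L_2 \to L_1$ to obtain the reverse implication; indeed, $f^{-1}$ is also an isomorphism of unit distance graphs, and all of the preceding structural lemmas---Lemmas \ref{l:vert}, \ref{l:vert_xyz}, \ref{l:horizontal}, \ref{l:delta} and \ref{l:planar_end}---hold with the roles of $L_1$ and $L_2$ interchanged (this is where $m_1 = m_2$ is implicitly used, to guarantee that $\Delta_2$ has the same kind of two-component structure as $\Delta_1$ so that the notion of a modified comb makes sense in both layers).

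For the forward direction, suppose the points $a_0, \dots, a_N, b_0, \dots, b_M, c_1, \dots, c_M \in \Delta_1$ form a modified $(N,M)$-comb in $L_1$. First, by Lemma \ref{l:delta}, every image $f(a_j)$, $f(b_j)$, $f(c_j)$ lies in $\Delta_2$. Second, by Lemma \ref{l:planar_end}($\beta$), these images satisfy properties (i)--(vi) from the definition of an $(N,M)$-comb, and in addition $f(c_1), \dots, f(c_M) \in \R \times \{0, \eps_2\}$. Combining both ingredients, the images form a modified $(N,M)$-comb in $L_2$, as required.

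I do not expect any substantive obstacle: the real work has already been done in Lemma \ref{l:planar_end}, whose assumption $m_i \geq 1$ is what forces the entire comb into a single horizontal strip of width $\delta_i$ and hence makes $\Delta_i$ behave, for the purposes of the comb construction, like a genuine strip of width $\delta_i$. The current corollary only repackages that output in a symmetric form between $L_1$ and $L_2$. The equality $m_1 = m_2$ itself is not needed inside the argument above, but it will be essential at the next stage of the proof of Theorem \ref{t:planar}, where Corollaries \ref{cor:planar_end_1} and \ref{cor:planar_end_2} are combined to conclude that $\delta_1 \geq \sqrt{1 - N^2/(4M^2)}$ if and only if $\delta_2 \geq \sqrt{1 - N^2/(4M^2)}$ for all admissible $N, M$, which by density forces $\delta_1 = \delta_2$ and hence $\eps_1 = \eps_2$.
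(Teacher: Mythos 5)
Your proof is correct and takes essentially the same route as the paper: the corollary is derived immediately from Lemma \ref{l:planar_end} (part $(\beta)$) combined with Lemma \ref{l:delta} to place the image points in $\Delta_2$, with the converse obtained symmetrically via the inverse isomorphism $f^{-1}$.
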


Now we are ready to complete the proof of Theorem \ref{t:planar}. Without loss of generality, we assume that $\delta_1 \geq \delta_2$.

Case I. Let $m_1 > m_2$. That contradicts Observation \ref{obs:sandwich}, because $L_1$ contains $m$-sandwich, and $L_2$ does not.

Case II. Let $m_1 = m_2 = 0, \eps_1 > \eps_2$. Then there exist integer numbers $N,M$ such that $\eps_1 > \sqrt{1-\frac{N^2}{4M^2}} > \eps_2$. Due to Proposition \ref{prop:comb}, $L_1$ contains a $(N,M)$-comb, but $L_2$ does not. That contradicts Lemma \ref{l:eps_less_one}.

Case III. Let $m_1 = m_2 > 0, \delta_1 > \delta_2$. Then there exist integer numbers $N,M$ such that $\delta_1 > \sqrt{1-\frac{N^2}{4M^2}} > \delta_2$. Due to Corollary \ref{cor:planar_end_1}, $L_1$ contains a modified $(N,M)$-comb, but $L_2$ does not. That contradicts Corollary \ref{cor:planar_end_2}.

Therefore, we get $\eps_1 = \eps_2$, which completes the proof of Theorem \ref{t:planar}.

\section{Proof of Theorem \ref{t:main}}
Let $n,m \in \N, n \geq 2; p \in (0,+\infty); \eps_1, \eps_2 \in (0,+\infty)$. Consider the layers $L_1 = L(n, m, p, \eps_1)$, $L_2 = L(n, m, p, \eps_1)$, and denote the unit distance graphs of these layers by $G_1$ and $G_2$, respectively. Denote by $\rho_i(x,y)$ the length of shortest path between two vertices $x,y$ in the graph $G_i$. 

Suppose $f:L_1\ra L_2$ is an isomorphism of the graphs $G_1, G_2$. We will prove that this implies $\eps_1=\eps_2$.

For definiteness, we take $n$-dimensional real space $\H$, $m$-dimensional real space $\V$, and $(n+m)$-dimensional real space $\T$ such that, for $i\in \{1,2\}$,
$$L_i = \H \times [0,\eps]^m \subset \H \times \V = \T.$$

By analogy to Theorem \ref{t:planar}, we say that any subspace $P$ in $\T$ parallel to the subspace $\H$ is \textit{horizontal}, and any subspace $P$ in $\T$ parallel to the subspace $\V$ is \textit{vertical}.

\begin{proposition}\label{prop:dist2}
    Given $i\in \{1,2\}, x,y \in L_i$, we have $||x-y||_p = 2$ if and only if there exists exactly one point $z\in L_i$ such that $||x-z||_p = ||y-z||_p = 1$.

    In this case, $z$ is the middle of the segment $[x,y]$.
\end{proposition}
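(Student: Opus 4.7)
The plan is to prove the biconditional by case analysis on $d := ||x-y||_p$, using strict convexity of the $\ell_p$ unit ball for $p \in (1, +\infty)$ and the hypothesis $n \geq 2$.

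If $d > 2$, no $z$ with $||x-z||_p = ||y-z||_p = 1$ can exist by the triangle inequality, so the ``exactly one'' hypothesis fails. If $d = 2$, the midpoint $z_0 = (x+y)/2$ lies in $L_i$ (each vertical coordinate is an average of two values in $[0,\eps]$) and trivially satisfies both unit-distance conditions. For uniqueness, any other candidate $z$ forces equality in $||x-y||_p \leq ||x-z||_p + ||z-y||_p$, and strict convexity of the $\ell_p$ norm then makes $x-z$ and $z-y$ positive scalar multiples of a common vector; combined with equal norms this gives $z = z_0$. This simultaneously pins the unique $z$ down as the midpoint, settling the last sentence of the proposition.

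The main work lies in the case $d < 2$, where I must produce at least two distinct such $z$ to contradict uniqueness. Write $a := (x_h - y_h)/2 \in \H$ and $\beta := ||(x_v - y_v)/2||_p$, and set $z(u) := \bigl( (x_h+y_h)/2 + u,\, (x_v+y_v)/2 \bigr)$ for $u \in \H$; every such $z(u)$ automatically lies in $L_i$ since its vertical coordinate matches that of the midpoint, and
$$||x-z(u)||_p^p = ||a-u||_p^p + \beta^p, \qquad ||y-z(u)||_p^p = ||a+u||_p^p + \beta^p.$$
The two conditions thus reduce to $||a-u||_p = ||a+u||_p$ together with $||a-u||_p = r$ for $r := (1-\beta^p)^{1/p}$, and $d < 2$ forces $r > ||a||_p$. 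I would now restrict to the two-dimensional slice $u = (u_1, u_2, 0, \ldots, 0)$ (possible thanks to $n \geq 2$) and exploit the $u \mapsto -u$ symmetry of the equidistance set $E$. If either $a_1$ or $a_2$ vanishes, $E$ contains a coordinate axis (or all of the slice) and an antipodal pair of solutions is explicit. Otherwise, using the strict monotonicity of $t \mapsto |a_j + t|^p - |a_j - t|^p$ for $a_j \neq 0$ and $p > 1$, the slice meets $E$ in a smooth curve through $0$ that extends to infinity in both directions; along this curve $||a-u||_p$ starts at $||a||_p < r$ and tends to $+\infty$, so the intermediate value theorem yields some $u^* \in E$ with $||a-u^*||_p = r$, and the symmetry gives the distinct second solution $-u^* \in E$. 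The pair $z(u^*), z(-u^*) \in L_i$ then contradicts the uniqueness assumption.

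The main obstacle is exactly this two-solution construction in the case $d < 2$, and this is where $n \geq 2$ is indispensable: the analogous statement is false in the strip case $n = 1$ (as the text remarks, explaining the difference between Theorems \ref{t:planar} and \ref{t:main}), so any argument must break down without a two-dimensional horizontal slice to work in. The ingredients that make it work are the $u \mapsto -u$ symmetry of $E$ and the positive dimensionality of $E$ inside the slice.
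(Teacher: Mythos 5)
Your proof is correct and follows essentially the same route as the paper: midpoint plus strict convexity for $||x-y||_p=2$, triangle inequality for $||x-y||_p>2$, and, for $||x-y||_p<2$, a pair of points placed symmetrically about the midpoint inside a horizontal $2$-plane (exactly where $n\geq 2$ enters in both arguments). Your handling of the last case is in fact more careful than the paper's, which takes $z',z''$ ``symmetrically'' on a line through the midpoint orthogonal to $xy$ — an argument that is literal only for $p=2$ — whereas you work along the genuine $l_p$-equidistance curve using monotonicity of $t\mapsto|a_j+t|^p-|a_j-t|^p$ and the intermediate value theorem.
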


\begin{proof}
Note that $L_i$ is a convex set.
    
    If $||x-y||_p = 2$, then obviously the unique desired point is $z = \frac{x+y}{2} \in L_i$. If $||x-y||_p > 2$, then there are no such points.

    We are only left to check that if $||x-y||_p <2$ then there exist at least two points $z',z''$ such that $||x-z'||_p = ||y-z'||_p = ||x-z''||_p = ||y-z''||_p = 1$. Indeed, let us consider a horizontal $2$-dimensional plane $P$ passing through the point $\frac{x+y}{2}$ (it exists because $n\geq 2$). Then there exists a line $l \subset P$ orthogonal to the line $xy$ such that $\frac{x+y}{2} \in l$. Then $||x-\frac{x+y}{2}||_{p} = ||y-\frac{x+y}{2}||_{p} < 1$, and $\frac{x+y}{2}$ divides $l$ into two rays. So, we can symmetrically take $z',z''$  with the necessary property on these rays.
\end{proof}

\begin{corollary}\label{cor:distk}
    For any $k \in \N$ and $x,y\in L_1$, we have $||x-y||_p=k$ if and only if $||f(x)-f(y)||_p=k$.
\end{corollary}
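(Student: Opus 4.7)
The plan is to prove this by induction on $k$, using Proposition \ref{prop:dist2} as the key tool. The case $k=1$ is immediate from the fact that $f$ is an isomorphism of unit distance graphs. The case $k=2$ follows directly from Proposition \ref{prop:dist2}: the condition ``there exists exactly one $z \in L_i$ with $||x-z||_p = ||y-z||_p = 1$'' is stated purely in terms of the unit distance graph $G_i$, so it transfers through $f$ and $f^{-1}$. Hence $||x-y||_p = 2$ iff $||f(x)-f(y)||_p = 2$. Moreover, the proposition identifies the unique common neighbor as the midpoint, so in this situation $f$ must send the midpoint of $[x,y]$ to the midpoint of $[f(x), f(y)]$.

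For the inductive step $k \geq 3$, given $x, y \in L_1$ with $||x-y||_p = k$, I would introduce the equally-spaced points $x_j = x + (j/k)(y-x)$ for $j = 0, 1, \dots, k$. Convexity of $L_1 = \H \times [0,\eps_1]^m$ guarantees all $x_j \in L_1$. We have $||x_{j-1} - x_j||_p = 1$ and $||x_{j-1} - x_{j+1}||_p = 2$, with $x_j$ the midpoint of $x_{j-1}$ and $x_{j+1}$. Applying the $k=2$ case to each triple $(x_{j-1}, x_j, x_{j+1})$ gives that $f(x_j)$ is the midpoint of $f(x_{j-1})$ and $f(x_{j+1})$ in $L_2$.

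I would then argue that these midpoint relations, combined with the fact that each consecutive pair is at distance exactly $1$, force the whole sequence $f(x_0), f(x_1), \dots, f(x_k)$ to lie on a common line with equal spacing $1$: the triple $f(x_{j-1}), f(x_j), f(x_{j+1})$ is collinear since $f(x_j)$ is their midpoint, and overlapping triples share two points, so all $f(x_j)$ lie on the line through $f(x_0)$ and $f(x_1)$. It follows that $||f(x_0) - f(x_k)||_p = k$, i.e., $||f(x) - f(y)||_p = k$. The reverse implication is obtained by applying the same argument to the isomorphism $f^{-1}$.

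The main obstacle is the collinearity-propagation step, which implicitly uses strict convexity of the $\ell^p$-norm for $p \in (1, +\infty)$; this is what makes the midpoint of a segment unique and guarantees that points $a, b, c$ with $b$ the midpoint of $a$ and $c$ are genuinely collinear. This is also precisely why Proposition \ref{prop:dist2} (and hence this corollary) fails for $p = \infty$, as noted in the remark following Theorem \ref{t:main}.
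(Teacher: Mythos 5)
Your proposal is correct and follows essentially the same route as the paper: base cases $k\leq 2$ via the isomorphism and Proposition \ref{prop:dist2}, then subdivision of the segment $[x,y]$ into $k$ unit steps and propagation of the midpoint property through overlapping triples. You are in fact slightly more explicit than the paper about why the collinearity propagates and why strict convexity of the $\ell_p$-norm for $p\in(1,+\infty)$ is what makes the midpoint argument work.
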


\begin{proof}
For $k = 0,1$ the statement obviously holds because $f$ is an isomorphism of the graphs $G_1,G_2$. For $k=2$ it holds due to Proposition \ref{prop:dist2}.

    Let $k>2$. If $||x-y||_p=k$, then there exists a sequence $x_0= x, x_1,x_2,...,x_k = y$ of points on the same line in $L_1$ that satisfy $||x_i-x_{i+1}||_p=1$ for $i=0,1,...,k-1$. For any $i \in \{0,1,...,k-2\}$, we have $||x_i-x_{i+2}||_p = 2$. Then $x_{i+1}$ is the unique point $z$ such that $||x_i-z||_p=||x_{i+2}-z||_p=1$. Since $f$ is an isomorphism of the graphs $G_1,G_2$, $f(x_{i+1})$ is the unique point $z$ such that $||f(x_i)-z||_p=||f(x_{i+2})-z||_p=1$. Hence, by Proposition \ref{prop:dist2}, $f(x_{i+1})$ is the middle of the segment $[f(x_i) f(x_{i+2})]$, and $||f(x_i)-f(x_{i+1})||_p = ||f(x_{i+1})-f(x_{i+2})||_p = 1$ for $i=0,1,...,k-2$. Therefore, $||f(x)-f(y)||_p = ||f(x_0)-f(x_k)||_p = k$.
\end{proof}

\begin{corollary}\label{cor:horizontal2}
Let $a,b \in L_1$ be two different points such that the line $ab$ is horizontal. Then the line $f(a)f(b)$ is horizontal.
\end{corollary}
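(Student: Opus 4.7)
The plan is to reduce the statement to a path argument: I will build a path from $a$ to $b$ in $L_1$ consisting of unit horizontal steps, and show that $f$ shifts by a horizontal vector along every such step. The key lemma I would prove first is the following: \emph{for every $p \in L_1$ and every unit vector $u' \in \H$, $f(p+u')-f(p) \in \H$.}

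To prove this lemma, consider the bi-infinite chain $\{p + ku' : k \in \Z\}$, which lies entirely in $L_1$ because $u'$ is horizontal and each member is therefore a horizontal translate of $p$. For each $k$, the triple $p+(k-1)u'$, $p+ku'$, $p+(k+1)u'$ realizes the distance-$2$ configuration of Proposition \ref{prop:dist2} with unique midpoint $p + ku'$. Corollary \ref{cor:distk} gives $\|f(p+(k-1)u') - f(p+(k+1)u')\|_p = 2$, so by Proposition \ref{prop:dist2} applied in $L_2$ the point $f(p+ku')$ is the unique midpoint of $f(p+(k-1)u')$ and $f(p+(k+1)u')$. A straightforward induction then produces a single vector $v := f(p+u')-f(p)$ of unit $\ell_p$-norm such that $f(p+ku') = f(p) + kv$ for every $k \in \Z$. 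Since this image chain lies in the slab $L_2 = \H \times [0,\eps_2]^m$ of bounded $\V$-extent for \emph{all} $k \in \Z$, the $\V$-component of $v$ must vanish, giving $v \in \H$, and in particular $f(p+u') - f(p) = v \in \H$.

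With this lemma in hand, the corollary follows by concatenation. Because $\dim \H = n \geq 2$, I can decompose the displacement $b - a \in \H$ as a sum $u_1 + u_2 + \cdots + u_N$ of unit horizontal vectors in $\H$: two terms suffice when $\|a-b\|_p \leq 2$ (intersect two unit $\ell_p$-spheres in a $2$-dimensional subspace containing $b-a$), and iterating by peeling off copies of $(b-a)/\|b-a\|_p$ handles the general case. Set $p_0 := a$ and $p_i := p_{i-1} + u_i$, so $p_N = b$ and each intermediate $p_i$ is a horizontal translate of $a$, hence lies in $L_1$. Applying the lemma at each step yields $f(p_i) - f(p_{i-1}) \in \H$, and telescoping gives
\[
f(b) - f(a) \;=\; \sum_{i=1}^{N} \bigl(f(p_i) - f(p_{i-1})\bigr) \;\in\; \H,
\]
so the direction of the line $f(a)f(b)$ lies in $\H$, i.e.\ this line is horizontal.

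The main obstacle is the inductive step in the key lemma: showing that successively applying the midpoint characterization along the chain $\{p + ku'\}$ produces a \emph{single} fixed shift $v$, rather than a drifting sequence. This is controlled by the uniqueness clause of Proposition \ref{prop:dist2}: once $f(p), f(p+u'), f(p+2u')$ are forced to be collinear and unit-spaced, the same argument applied to the next triple propagates the linear structure one step further, and analogously for negative indices. The horizontality of $v$ is then an immediate consequence of the bounded $\V$-thickness of $L_2$, and the remaining path-concatenation is routine once the unit-vector decomposition of $b-a$ is in place.
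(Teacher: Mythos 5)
Your proof is correct and rests on essentially the same mechanism as the paper's: the unique-midpoint characterization of Proposition \ref{prop:dist2}, applied along an infinite unit-spaced chain, forces the image chain to be an equally spaced collinear sequence, and the bounded $\V$-extent of $L_2$ then forces its direction to lie in $\H$. The only difference is in the bookkeeping that connects $a$ to $b$ --- the paper uses two such chains meeting at a common point inside a horizontal $2$-plane, while you decompose $b-a$ into unit horizontal vectors and telescope; both bridges use $n \geq 2$ in the same way.
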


\begin{proof}
 For any $i \in \{1,2\}$, $\textbf h \in \H$, $\textbf v \in \V \setminus \{0\}$, there exists a sufficiently large $c \in \R$ such that $c\textbf v \notin [0,\eps]^m$, and consequently, $\textbf h+\textbf v \notin L_i$. Hence, if $x_1,x_2,...,x_k,...\in L_i$ is a sequence of different points with the property
$$(*) \forall j \in \N \; \exists! x = x_{j+1} \text{\;such that\;} ||x_j - x||_p = ||x_{j+2}-x||_p = 1,$$
then, due to Proposition \ref{prop:dist2}, all the points $x_j$ lie on the same horizontal line.

\begin{figure}[ht]
\center{\includegraphics[scale=0.5, width=250pt]{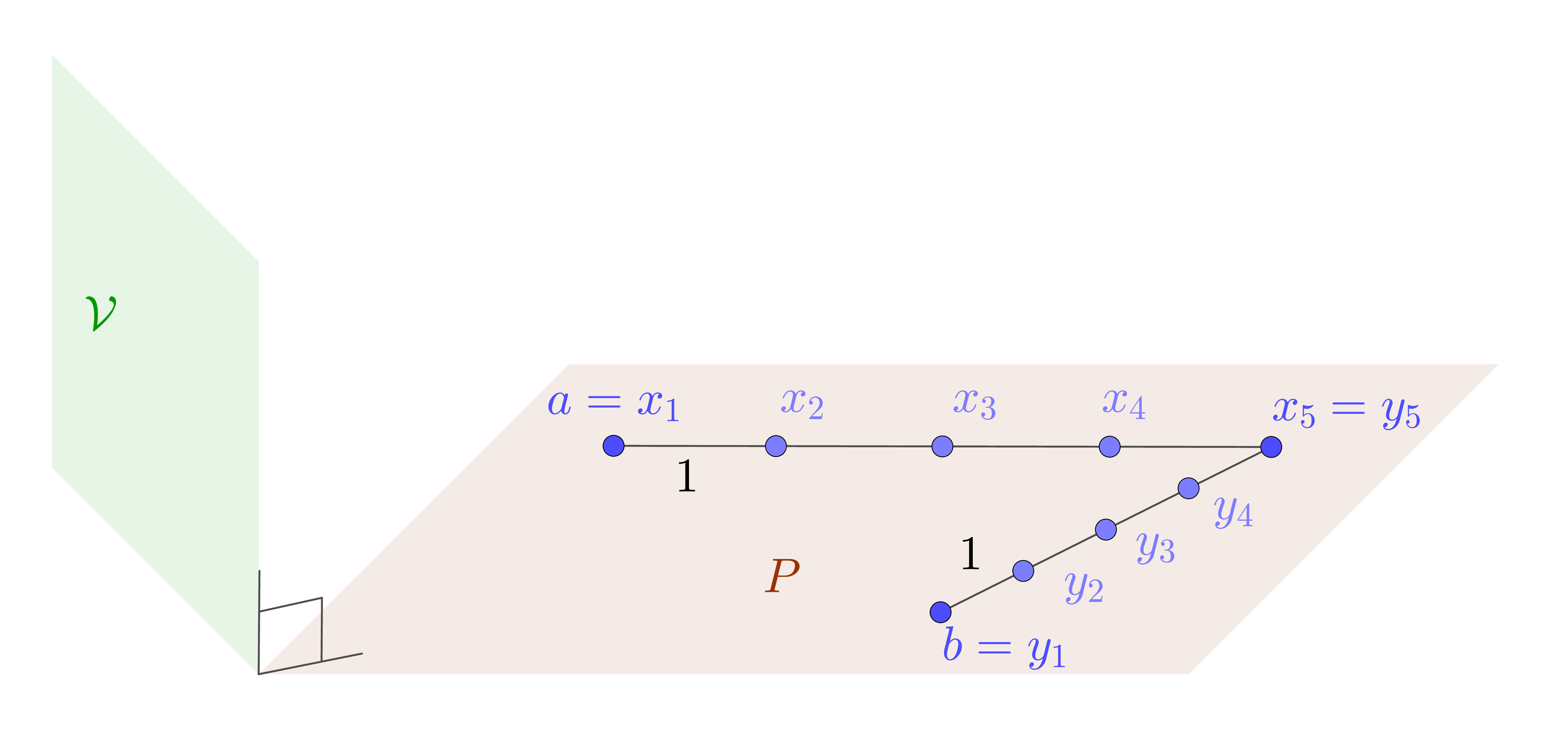}}
\caption{The condition of horizontality of a segment $ab$ in $L_1$.}
\label{fig:dist2}
\end{figure}

Since $n \geq 2$, there exists a horizontal $2$-dimensional plane $P\subset L_1$ containing $a,b$. Thus,
there exists $k \in \N$ and two sequences of different points $\{x_1=a,x_2,...,x_k, x_{k+1},..\}$, $\{y_1=b, y_2,..,y_k=x_k, y_{k+1},...\}$ on $P$, such that $(*)$ holds for the sequence of $x_j$ and holds for the sequence of $y_j$ (see Fig. \ref{fig:dist2}). Since $f$ is an isomorphism of graphs, $(*)$ also holds for the sequence of $f(x_j)$ and for the sequence of $f(y_j)$.
Hence, the lines $f(x_1)f(x_2)$ and $f(y_1)f(y_2)$ are horizontal, and, consequently, $f(a)f(b)$ is also horizontal.
\end{proof}

For $x\in L_j$ denote by $\pr(x)$ the orthogonal projection of $x$ onto the subspace $\H$. Also denote by $H_x$ ($V_x$) the unique horizontal (vertical) $n$-dimensional ($m$-dimensional) subspace in $L_2$ passing through $x$. Note that for any $x \in L_i$ we have $H_x \subset L_i$, but $V_x \not\subset L_i$.
Denote by $(\cdot, \cdot)$ the standard Euclidean scalar product in the spaces $\H, \V, \T$.

The following lemma is a multidimensional analogue of Lemma \ref{l:main1}.

\begin{lemma}\label{l:main2}
    Let $x_1,...,x_{n+1}, y \in L_i$ be different points. Then the implications $(\widehat\Omega) \Leftrightarrow (\widehat\Gamma) \Leftrightarrow (\widehat \Gamma')$
    hold for the following conditions.
    
    $(\widehat\Gamma)$ For any horizontal ray $l \subset L_i$  from the point $y$, there exist $r > 0$ and $k \in [n+1]$, such that for any $z \in l$ with $||z-y||_p>r$ we have $||z-x_k||_{p} < ||z-y||_{p}$.

    $(\widehat\Gamma')$ There exists a constant $r > 0$ such that for any $z \in \H_y$ with $||z-y||_p>r$ we have $\min\limits_{k\in[n+1]}||z-x_k||_{p} < ||z-y||_{p}$.


    $(\widehat \Omega)$. The interior of the convex hull of the points $\pr(x_1),...\pr(x_{p_j+1})$ in the space $\H$ is not empty and contains $\pr(y)$.
\end{lemma}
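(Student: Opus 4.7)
The plan is to reduce all three conditions to a single algebraic condition on the duality (gradient) map of the $l_p$-norm. Let $\phi\colon\H\setminus\{0\}\to\H$ denote this map, given explicitly by $\phi(u)_j = |u_j|^{p-1}\operatorname{sgn}(u_j)$; it is the restriction to $\H$ of the gradient of $\|\cdot\|_p$. Since $p\in(1,+\infty)$, the norm $\|\cdot\|_p$ is smooth and strictly convex, so $\phi$ is a positive-homogeneous homeomorphism of $\H\setminus\{0\}$. My first step is to establish the asymptotic
\[
\|tu+a\|_p = t + \phi(u)\cdot a_h + o(1) \qquad (t\to+\infty),
\]
uniformly in $u$ over the unit sphere $S:=\{v\in\H:\|v\|_p=1\}$, for any fixed vector $a=a_h+a_v\in\T$ with $a_h\in\H$, $a_v\in\V$. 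This comes from uniform differentiability of $\|\cdot\|_p$ on a compact neighbourhood of $S$, together with the identity $\|tu+a\|_p^p = \|tu+a_h\|_p^p + \|a_v\|_p^p$. The crucial refinement is a borderline analysis: when $\phi(u)\cdot a_h=0$ and $a\neq 0$, strict convexity of $\|\cdot\|_p$ forces $\|tu+a\|_p>t$ for all sufficiently large $t$. Indeed, either $a_h\neq 0$, in which case $a_h$ lies in the tangent hyperplane to the unit ball at $u$ and by strict convexity $u+\epsilon a_h$ is strictly outside the ball for all small $\epsilon>0$, or $a_h=0$ and then $\|tu+a_v\|_p^p=t^p+\|a_v\|_p^p>t^p$.

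Applied with $a=y-x_k$, this yields a clean dichotomy: for $z=y+tu$ with $u\in S$ and $t$ large, $\|z-x_k\|_p<\|z-y\|_p$ if and only if $\phi(u)\cdot(\pr(x_k)-\pr(y))>0$. Consequently $(\widehat\Gamma)$ is equivalent to the statement that for every $u\in S$ there exists $k\in[n+1]$ with $\phi(u)\cdot(\pr(x_k)-\pr(y))>0$. Since $\phi$ is a bijection between $S$ and the unit sphere of the dual norm, this is in turn the same as asking that for every nonzero $w\in\H$ some $k$ satisfies $w\cdot(\pr(x_k)-\pr(y))>0$, which by standard hyperplane separation is precisely the assertion that $\pr(y)$ lies in the interior of $\operatorname{conv}(\pr(x_1),\ldots,\pr(x_{n+1}))$, namely $(\widehat\Omega)$. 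Thus $(\widehat\Omega)\Leftrightarrow(\widehat\Gamma)$.

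For $(\widehat\Omega)\Rightarrow(\widehat\Gamma')$ I upgrade this to a uniform statement. Under $(\widehat\Omega)$, the continuous function $u\mapsto \max_k \phi(u)\cdot(\pr(x_k)-\pr(y))$ is strictly positive on the compact sphere $S$, hence attains a minimum $\alpha>0$. Choosing for each $u\in S$ an index $k^*(u)$ realising this maximum, the uniform asymptotic gives $\|z-x_{k^*(u)}\|_p \leq \|z-y\|_p-\alpha/2$ for every $z=y+tu$ with $t>r$ and a single $r>0$, which is exactly $(\widehat\Gamma')$. The converse $(\widehat\Gamma')\Rightarrow(\widehat\Gamma)$ is then essentially automatic: on any horizontal ray from $y$, condition $(\widehat\Gamma')$ forces some index from the finite set $[n+1]$ to witness $\|z-x_k\|_p<\|z-y\|_p$ on an unbounded subset of the ray, and the dichotomy above then pins down a single $k$ that works for all sufficiently large points on that ray.

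The main obstacle is the borderline case $\phi(u)\cdot(\pr(x_k)-\pr(y))=0$, which is exactly where smoothness and strict convexity of $\|\cdot\|_p$ -- and thus the hypothesis $p\in(1,+\infty)$ singled out in the remark following Theorem \ref{t:main} -- are used. It must be handled explicitly in the contrapositive of $(\widehat\Gamma)\Rightarrow(\widehat\Omega)$: if $\pr(y)$ is not in the interior of the convex hull, separation produces a nonzero $w\in\H$ with $w\cdot(\pr(x_k)-\pr(y))\leq 0$ for all $k$; setting $u:=\phi^{-1}(w)$, for each $k$ either the linear term is strictly negative, giving $\|z-x_k\|_p>\|z-y\|_p$ for large $t$, or it vanishes, in which case strict convexity together with $x_k\neq y$ still forces $\|z-x_k\|_p>\|z-y\|_p$. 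Either way, on the ray $y+tu$ no index $k$ witnesses $(\widehat\Gamma)$, so $(\widehat\Gamma)$ fails.
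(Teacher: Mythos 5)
Your proof is correct and follows essentially the same route as the paper: both reduce $(\widehat\Gamma)$ to the linear condition ``for every horizontal direction $\mathbf{u}$ some $k$ satisfies $\phi(\mathbf{u})\cdot(\pr(x_k)-\pr(y))>0$'' via the first-order behaviour of the smooth $l_p$-norm along rays (the paper phrases this through supporting hyperplanes to the unit ball, you through the explicit gradient/duality map --- the same object), and then identify that condition with $(\widehat\Omega)$ by hyperplane separation, with strict convexity handling the borderline case in both treatments. The only genuine divergence is the uniformity step: the paper obtains $(\widehat\Gamma')$ from $(\widehat\Gamma)$ by asserting continuity of the threshold $r(\mathbf{u})$ and compactness of the sphere, whereas you derive $(\widehat\Gamma')$ directly from $(\widehat\Omega)$ via the uniform gap $\alpha=\min_{\mathbf{u}}\max_k \phi(\mathbf{u})\cdot(\pr(x_k)-\pr(y))>0$ and then close the cycle with a pigeonhole argument for $(\widehat\Gamma')\Rightarrow(\widehat\Gamma)$ (which the paper dismisses as obvious even though a single index must work on the whole tail of the ray); your handling of this step is the more robust of the two.
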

\begin{proof}[Proof of Lemma \ref{l:main2}.]

\textit{Part 1. The implications $(\widehat\Omega) \Leftrightarrow (\widehat\Gamma)$}.

 As is shown in the proof of Corollary \ref{cor:horizontal2}, any straight line $l \subset L_j$ is horizontal, and, consequently, is contained in $\H_y$. Consider the $(n-1)$-dimensional surface $S_{1,\H} = \{\textbf u \in \H: ||\textbf u||_{p} = 1 \}$ in the space $\H$. Any horizontal ray from $y$ can be represented as $\{y+R \textbf u: R\geq 0\}$ for some $\textbf u \in \H$.
  Thus,
$$\big(I\big).\; (\widehat\Gamma) \Leftrightarrow \forall \textbf u \in S_{1,\H}\;  \exists k \in [n+1]\; \exists r>0 \; \forall R>r:$$
$$||x_k-(y+R\textbf u)||_{p}<||y-(y+R\textbf u)||_p.$$

Consider the $(n+m-1)$-dimensional closed set $B_1 = \{\textbf u \in \T: ||u||_{p} \leq 1\}$ (the unit ball generated by the $l_p$-norm) in the space $\T$. Since $l_p$ is a smooth convex linear norm ($p \in (1,+\infty)$), the ball $B_{1}$ is a convex set with a smooth boundary $\partial B_{1}$.
Hence, for any $\textbf u \in S_{1,\H}$, there exists an unique $(n+m-1)$-dimensional supporting hyperplane $P_{\textbf u}$ to $\partial B_{1}$ at the point $\textbf u$.

The ball $B_1$ of the $l_p$-norm is a symmetric set with respect to the hyperplane $\H$. Then, since $n \geq 2$, the set $\pr(B_1)$ is a convex set in $\H$ with a smooth boundary $\partial (\pr(B_1))$, and $\pr(P_{\textbf u}) \subset \H$ is a $(n-1)$-dimensional supporting hyperplane to the surface $\partial (\pr(B_1)) = S_{1,\H}$ in the space $\H$.

Hence, there exists $s_{\textbf u} \in \H$ such that
$$P_{\textbf u} = \{p \in \T: (s_{\textbf u},p-s_{\textbf u})_E = 0\}.$$

The hyperplane $P_{\textbf u}$ divides the space $\T$ into two half-spaces $\T_{\textbf u}^+$, $\T_{\textbf u}^-$ such that $0 \in \T_{\textbf u}^-$, and
$$\T_{\textbf u}^+ =  \{\textbf w \in \T: (s_{\textbf u}, \textbf w-s_{\textbf u})_E \geq 0\};$$
$$\T_{{\textbf u}}^- =  \{\textbf w \in \T: (s_{\textbf u}, \textbf w-s_{\textbf u})_E < 0\}.$$

The subspace $\pr(P_{\textbf u})$ divides the space $\H$ into two parts $\pr(\T_{{\textbf u}}^+), \pr(\T_{\textbf u}^-)$. For any ${\textbf u}\in S_{1,\H}$ and $\textbf w \in \T$ represented as $\textbf w = \textbf h+\textbf v, \textbf h \in \H, \textbf v \in \V$, we have 
 $$(s_{\textbf u},\textbf w-s_{\textbf u})_E = (s_{\textbf u},\textbf h+\textbf v-s_{\textbf u})_E = (s_{\textbf u},\textbf h-s_{\textbf u})_E + (s_{\textbf u},\textbf v)_E = (s_{\textbf u},\textbf h-s_{\textbf u})_E,$$ because $s_{\textbf u} \in \H$ and $\textbf v \in V$ are orthogonal vectors. Hence, 
$$\big(II\big).\;\textbf w \in \H_{\textbf u}^+ \Leftrightarrow \pr(\textbf w) \in \pr(H_{\textbf u}^+).$$

For any $\textbf u \in S_{1,\H}, a \in \T$, we have: 
$$\big(III\big).\; \forall r>0 \; \exists R>r\;\; \textbf u+\frac{a}{R} \notin B_1 \Leftrightarrow a \in \T_{\textbf u}^+.$$
For $k \in [n+1]$, denote the vector $y-x_k$ by $\textbf a_k$. Any point $z \in \H_y \setminus \{y\}$ can be represented as $z = y+r_z \cdot \textbf u_z$, where $\textbf u_z \in S_{1,\H}, r_z > 0$. Then
$$\big(IV\big).\; ||x_k-z||_{p}>||y-z||_{p} \Leftrightarrow||y-(z+\textbf a_k)||_{p} > ||y-z||_{p} \Leftrightarrow$$
$$\Leftrightarrow r_z \textbf u_z+\textbf a_k \notin r_z \cdot B_1 \Leftrightarrow \textbf u_z+\frac{\textbf a_k}{r_z} \notin B_1.$$

Condition $(\widehat \Omega)$ is equivalent to the following (see Fig. \ref{fig:lemma_main2_omega} for an illustration):
$$\big(V\big). \;(\widehat\Omega) \Leftrightarrow \forall \textbf u \in S_{1,\H}\; \exists k \in [n+1]:\; (\pr(\textbf a_k), \textbf u) < 0.$$

\begin{figure}[ht]
\center{\includegraphics[scale=0.5, width=250pt]{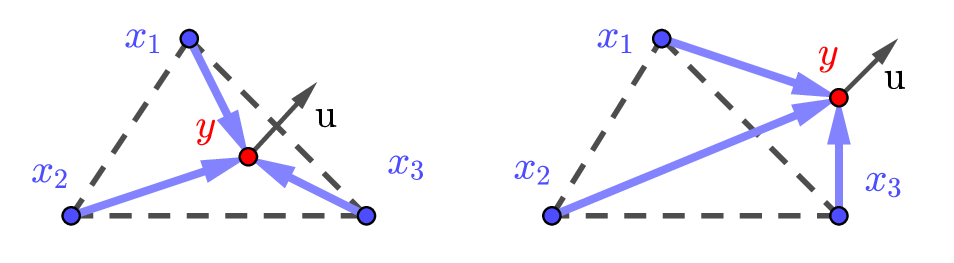}}
\caption{a) The case $n = 2$, $(\Omega)$ holds. b) The case $n = 2$, $(\Omega)$ does not hold.}
\label{fig:lemma_main2_omega}
\end{figure}

Thus,
$$\neg(\widehat\Gamma) \Leftrightarrow \text{ by $(I)$ } \Leftrightarrow$$
$$\Leftrightarrow \exists \textbf u \in S_{1,\H}\;  \forall k \in [n+1]\; \forall r>0 \; \exists R>r:$$
$$||x_k-(y+R\textbf u)||_{p}<||y-(y+R\textbf u)||_p\Leftrightarrow$$
$$\Leftrightarrow \text{ by $(IV)$ } \Leftrightarrow  \exists \textbf u \in S_{1,\H}\;  \forall k \in [n+1]\; \forall r>0 \; \exists R>r:\; \textbf u+\frac{\textbf a_k}{R} \notin B_1\Leftrightarrow $$
$$\Leftrightarrow \text{ by $(III)$ } \Leftrightarrow \exists \textbf u \in S_{1,\H}\;\forall k \in [n+1]:\; \textbf a_k \in \T_{\textbf u}^+\Leftrightarrow $$
$$\Leftrightarrow \text{ by $(II)$ } \Leftrightarrow \exists \textbf u \in S_{1,\H}\;\forall k \in [n+1]:\; \pr(\textbf a_k) \in \pr(\T_{\textbf u}^+)\Leftrightarrow$$
$$\Leftrightarrow \exists \textbf u \in S_{1,\H}\;\forall k \in [n+1]:\; (\pr(\textbf a_k),\textbf u)_E \geq 0 \Leftrightarrow \text{ by $(V)$ } \Leftrightarrow\neg(\widehat\Omega).$$

\begin{figure}[ht]
\center{\includegraphics[scale=0.5, width=320pt]{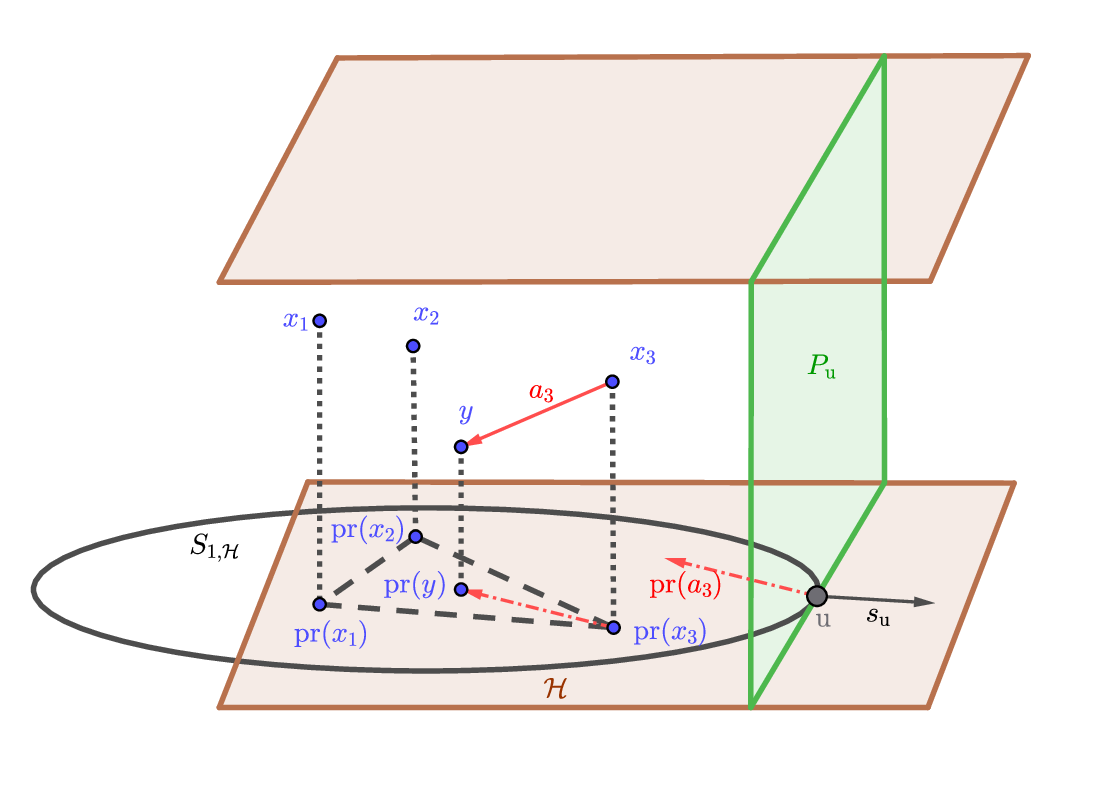}}
\caption{The case $n = 2$, $(\widehat\Omega), (\widehat\Gamma)$ hold.}
\label{fig:main_lemma_2_example_1}
\end{figure}

\begin{figure}[ht]
\center{\includegraphics[scale=0.5, width=290pt]{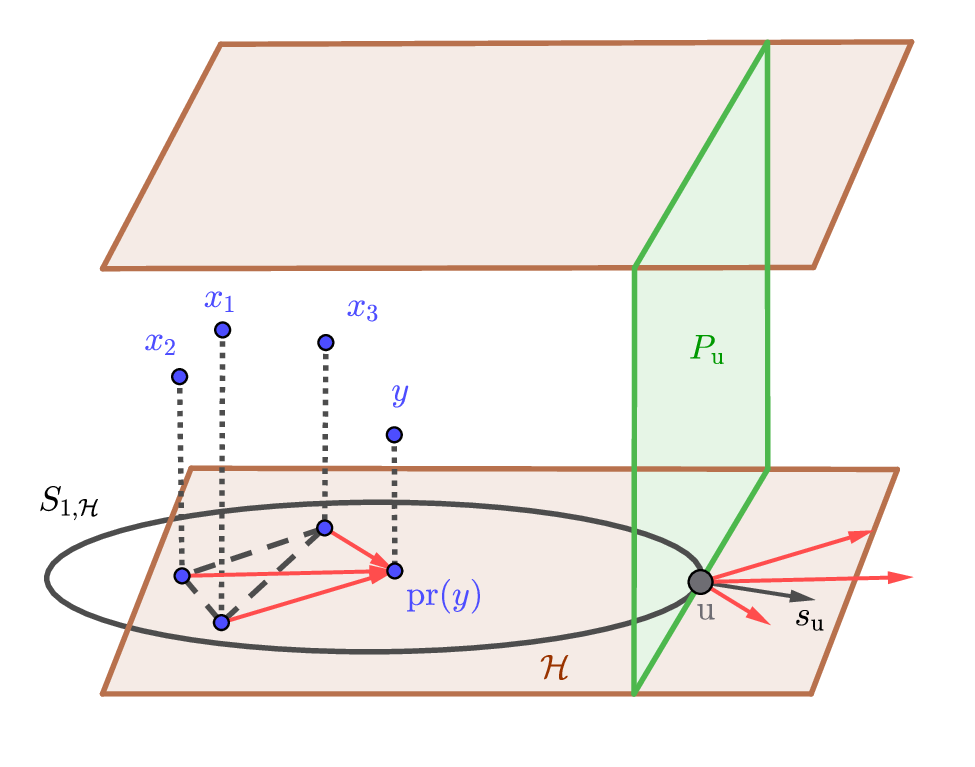}}
\caption{The case $n = 2$, $\neg(\widehat\Omega), (\widehat\Gamma^-)$ hold.}
\label{fig:main_lemma_2_example_2}
\end{figure}

Therefore, $(\widehat\Omega) \Leftrightarrow (\widehat\Gamma)$
(see Figs. \ref{fig:main_lemma_2_example_1}, \ref{fig:main_lemma_2_example_2} for an illustration of the key idea).

\textit{Part 2. The implications $(\widehat\Gamma) \Leftrightarrow (\widehat\Gamma')$}.

The implication $(\widehat\Gamma') \rightarrow (\widehat\Gamma)$ is obvious.

Let $(\widehat\Gamma)$ holds. Then, for any fixed $\textbf{u} \in S_{1,\H}$ and horizontal ray $l_{\textbf u} = \{y+R\textbf{u} \in \H_y :\; R \geq 0\}$, there exists a non-empty set $\mathcal R_\textbf{u}$ of constants $r > 0$ such that for any $z \in l_{\textbf u}$ with $||z-y||_p > r$ we have
$$\min\limits_{k\in[n+1]} ||z-x_k||_p < ||z-y||_p.$$

Denote $r(\textbf{u}) = \inf\limits_{r \in \mathcal{R}_\textbf{u}} r$. By the construction of $\mathcal{R}_\textbf{u}$, we have $r(\textbf{u}) \in \mathcal{R}_\textbf{u}$.

Obviously, $z \in \H_y$ if and only if there exists ${\textbf u} \in S_{1,\H}$ such that $z \in l_{\textbf u}$.

Since the functions $||z-y||_p$ and $||z-x_k||_p$ for $k \in [n+1]$ are continuous, the function $r(\textbf{u})$ is also continuous on the set $S_{1,\H}$. Then, due to compactness of $S_{1,\H}$, there exists $r = \max\limits_{\textbf{u} \in S_{1,\H}} r(\textbf{u}) < +\infty$. Then $(\widehat\Gamma')$ holds.

\end{proof}

Now we will prove an analogue of Proposition \ref{prop:eq}.

\begin{proposition}\label{prop:C}
     Given $i\in \{1,2\}$, there exists a constant $C_i=C_i(\eps_i)>0$, such that for any $a,b \in L_i$, if $\rho_{i}(a,b) > C_i$ then $\rho_{i}(a,b)=\left\lceil ||a-b||_{p}\right\rceil$.
\end{proposition}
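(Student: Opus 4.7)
The inequality $\rho_i(a,b) \geq \lceil ||a-b||_p \rceil$ is immediate from the triangle inequality: for any path $a = u_0, u_1, \dots, u_N = b$ in $G_i$ one has $||a-b||_p \leq \sum_{j=0}^{N-1} ||u_j - u_{j+1}||_p = N$, so $N \geq \lceil ||a-b||_p \rceil$. All the effort goes into the matching upper bound.

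Set $d = ||a-b||_p$ and $k = \lceil d \rceil$. The plan splits into two constructions. If $d$ is a positive integer, the $d$-fold equal subdivision of the segment $[a,b]$ (contained in $L_i$ by convexity) is a path of length $d = k$. If $d > 1$ is not an integer, walk $k-2$ unit steps from $a$ along the direction $(b-a)/d$ to reach $a^* \in [a,b]$ with $||a^* - b||_p = 1 + \delta \in (1, 2)$ (where $\delta = d - (k-1)$), and then close the remaining gap with a two-step ``zigzag''. For $d \in (0, 1)$ the desired formula $\rho_i(a,b) = \lceil d \rceil = 1$ genuinely fails, but the same zigzag applied directly to $a, b$ shows $\rho_i(a,b) \leq 2$ whenever $d < 2$. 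Thus taking $C_i := 2$ forces $d \geq 1$ as soon as $\rho_i(a,b) > C_i$, and the construction above then realises the required upper bound $\rho_i(a,b) \leq \lceil d \rceil$.

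The core technical point is the two-step zigzag. Decompose $u := b - a^* = u_\H + u_\V$ with $u_\H \in \H$, $u_\V \in \V$, and look for $v_1 = u/2 + w$, $v_2 = u/2 - w$ with $w \in \H$ (so both $v_j$ share vertical part $u_\V/2$). The unit-norm condition $||v_j||_p = 1$ then reduces to requiring $||u_\H/2 \pm w||_p = \gamma$, where
\[
\gamma := \left(1 - ||u_\V/2||_p^p\right)^{1/p} > 0
\]
(positivity uses $||u_\V||_p \leq ||u||_p < 2$). Geometrically this is the intersection, inside $\H$, of two $l_p$-spheres of radius $\gamma$ centred at $\pm u_\H/2$: their centres are at distance $||u_\H||_p \leq 2\gamma$ (algebraically equivalent to $||u||_p \leq 2$), and the intersection is non-empty thanks to $n = \dim \H \geq 2$. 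Having fixed such a $w$, the intermediate point $a^* + v_1$ has vertical coordinate $(a^*_\V + b_\V)/2 \in [0, \eps_i]^m$ by convexity of the cube, so it lies in $L_i$, completing the path. The main obstacle is exactly this zigzag construction, which relies essentially on the hypothesis $n \geq 2$; the case $n = 1$ would force $w$ to lie in a one-dimensional space where the two $l_p$-spheres generically miss, which is why the planar version of this proposition required the more delicate polygonal-chain argument of Proposition \ref{prop:eq}.
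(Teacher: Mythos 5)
Your proof is correct, and it takes a genuinely different route from the paper's. The paper fixes up the path at its \emph{first} step: it finds a point $x_3$ at unit distance from $a$ whose distance to $b$ is exactly the integer $m=\lfloor\,\lVert a-b\rVert_p\rfloor$, by running an intermediate--value argument on the (connected) unit sphere about $a$ between a point on the segment $[a,b]$ (residual distance $<m$) and a point far out in $\H_a$ (residual distance $>m$, obtained via a supporting--hyperplane/maximal--angle compactness argument); this is why its constant $C_i(\eps_i)$ is only shown to exist. You instead keep the first $k-2$ steps on the segment and repair only the \emph{last two} steps with a symmetric zigzag, reducing the existence of the apex to the intersection of two equal--radius $l_p$-spheres in $\H$ via the coordinate splitting $\lVert h+v\rVert_p^p=\lVert h\rVert_p^p+\lVert v\rVert_p^p$. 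Your version buys an explicit constant $C_i=2$, independent of $\eps_i$ and $p$, in fact establishes the formula for all $\lVert a-b\rVert_p\geq 1$, and dispenses with the angle/compactness machinery; both arguments ultimately hinge on $n\geq 2$ through the connectedness of a sphere. The one place you are terse is the claim that the two $l_p$-spheres of radius $\gamma$ with centres at distance $\lVert u_\H\rVert_p\leq 2\gamma$ meet: for a general (non-Euclidean) $l_p$-norm this is not a formula but an intermediate--value argument --- on the connected sphere $S_1=\{w:\lVert w+u_\H/2\rVert_p=\gamma\}$ the continuous function $w\mapsto\lVert w-u_\H/2\rVert_p$ takes the value $|\gamma-\lVert u_\H\rVert_p|\leq\gamma$ at one point and $\gamma+\lVert u_\H\rVert_p\geq\gamma$ at the antipodal one --- and spelling this out would make the proof complete; it is exactly the same kind of step the paper performs explicitly with its function $\lambda$.
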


\begin{proof}
If $||a-b||\in \N$, then the statement is obvious.

Let $||a-b||_p = m+\delta, m\in \N, m>10, \delta \in (0, 1)$.
 
Consider the set $S = \{x \in L_i: ||x-a||_p=1\}$.

Let $x_1 = [a,b] \cap S$. We have $||x_1-b||_p = m-1+\delta < m$.

Denote by $B = \{x \in \T: ||x||_p=1 \}$ the unit ball in the metric $l_p$ with center $0$ and radius $1$. Denote by $\partial B$ its boundary, it is a smooth surface. For any $u \in \partial B$ consider the maximal angle $\theta(u) \in (0,\pi)$ between the vector $-u$ and a ray $l$ from the point $u$ such that $l$ is contained in the supporting hyperplane to $\partial B$ at the point $u$ (the maximum exists due to compactness reasoning). Since $\partial B$ is compact, the number $\theta = \sup_{u \in \partial B} \theta(u)$ is well defined and is less than $\pi$.

Consider the ball
$$B_{b,a} = \{x \in \T: ||x-b||_p \leq ||a-b||_p\} = ||a-b||_p\cdot B+b.$$ 
By construction of the number $\theta$, for any $x \in \T$ we have
$$\angle bax > \theta \Rightarrow x \notin B_{b,a} \Rightarrow ||b-x||_p>||b-a||_p>m.$$

For any sufficiently large $m$, the angle between the line $ab$ and the subspace $\H_a$ is less than $\pi-\theta$. Thus, there exists a ray $l\subset \H_a$ from the point $a$ such that $\angle bax > \theta$. Take the point $x_2 = l \cap S$ (it exists because $\H_a \subset L_i$). We have $\angle bax_2 > \theta \Rightarrow ||b-x_2||_p > m$.

Therefore, we have two points $x_1,x_2 \in S = \{x \in L_i: ||x-a||_p=1\}$, such that
$$||x_1-b||_p < m < ||x_2-b||_p.$$

Since $n\geq 2$, the set $S$ is connected. The function $\lambda(x) = ||x-a||_p$ is continuous on the set $S$, and $\lambda(x_1) < m < \lambda(x_2)$. Hence, there exists a point $x_3 \in S \subset L_i$, such that $\lambda(x_3)=m$.

Then 
$$\rho_{i}(a,b) \geq \rho_{i}(b,x_3)+\rho_{i}(a,x_3)=||b-x_3||_p + ||a-x_3||_p = m+1;$$
$$\rho_{i}(a,b) \leq ||b-a||_p = m+\delta.$$

Hence, $\rho_{i}(a,b) = m+1  = \left\lceil ||a-b||_{p}\right\rceil$.
\end{proof}

\begin{corollary}\label{cor:C}
    There exists a constant $C=C(\eps_1,\eps_2)=\max(C_1,C_2)>0$, such that for any $i\in \{1,2\}, a,b \in L_i$, if $\rho_{i}(a,b) > C$ then $\rho_{i}(a,b)=\left\lceil ||a-b||_{p}\right\rceil$.
\end{corollary}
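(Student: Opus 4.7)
\textbf{Proof plan for Corollary \ref{cor:C}.} The statement is a direct uniformization of Proposition \ref{prop:C} over the two indices $i \in \{1,2\}$, so the plan reduces to assembling the two thresholds produced by that proposition into a single one. I expect no genuine obstacle here; the only thing to check is that taking a maximum is compatible with the quantifier structure of Proposition \ref{prop:C}.

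Concretely, Proposition \ref{prop:C} supplies, for each $i \in \{1,2\}$ separately, a constant $C_i = C_i(\eps_i) > 0$ such that $\rho_i(a,b) > C_i$ implies $\rho_i(a,b) = \lceil\,||a-b||_p\,\rceil$ for all $a,b \in L_i$. I would simply set
\[
C := \max(C_1, C_2),
\]
which is a finite positive number depending only on $\eps_1$ and $\eps_2$, as required by the formulation.

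Given now any $i \in \{1,2\}$ and any $a,b \in L_i$ with $\rho_i(a,b) > C$, I note that $C \geq C_i$ by the definition of the maximum, and therefore $\rho_i(a,b) > C_i$. Hence Proposition \ref{prop:C}, applied in the layer $L_i$ to the pair $(a,b)$, yields $\rho_i(a,b) = \lceil\,||a-b||_p\,\rceil$. Since this argument works uniformly for $i = 1$ and $i = 2$, the single threshold $C$ serves both layers at once, which is exactly the content of the corollary.
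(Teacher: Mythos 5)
Your proposal is correct and matches the paper's intent exactly: the corollary is stated with $C=\max(C_1,C_2)$ precisely because it follows immediately from Proposition \ref{prop:C} by taking the larger of the two thresholds, and the paper accordingly gives no separate proof. Your verification that $\rho_i(a,b)>C\geq C_i$ lets Proposition \ref{prop:C} apply in each layer is all that is needed.
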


By Corollaries \ref{cor:horizontal2} and \ref{cor:C}, we immediately get the following corollary.

\begin{corollary}\label{cor:gamma}
    Let $x_1,...,x_{n+1},y \in L_1$ be different points. Then condition $(\widehat\Gamma')$ of Lemma \ref{l:main2} holds for $x_1,...,x_{n+1},y $ if and only if it holds for $f(x_1),...,f(x_{n+1}),f(y)$.
\end{corollary}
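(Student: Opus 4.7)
The plan is to reformulate condition $(\widehat\Gamma')$ in a way that is manifestly preserved by the graph isomorphism $f$, namely in terms of the graph distance $\rho_i$ and the horizontal subspace $\H_y$ rather than the $l_p$-norm. The two tools are exactly Corollaries~\ref{cor:horizontal2} and~\ref{cor:C}.

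First I would verify, using Corollary~\ref{cor:horizontal2}, that $f$ restricts to a bijection $\H_y \to \H_{f(y)}$: any two points of $\H_y$ lie on a common horizontal line, which Corollary~\ref{cor:horizontal2} sends into $\H_{f(y)}$, and applying the same argument to $f^{-1}$ gives the reverse inclusion. Next I would use Corollary~\ref{cor:C}, which equates $\rho_i(a,b)$ with $\lceil\|a-b\|_p\rceil$ whenever $\rho_i(a,b)>C$, to convert $l_p$-information about large distances into $\rho_i$-information and back. Combined with the identity $\rho_1(a,b)=\rho_2(f(a),f(b))$ and Proposition~\ref{prop:C} (which guarantees that $\|z-y\|_p\to\infty$ iff $\rho_i(z,y)\to\infty$), these observations let me translate any condition on the asymptotic comparison of $l_p$-distances into an equivalent condition on graph distances along $\H_y$.

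Concretely, I would show that $(\widehat\Gamma')$ for $x_1,\ldots,x_{n+1},y$ in $L_1$ is equivalent to the graph condition $(\widehat\Gamma'_\rho)$: there exists $R>C$ such that for every $z\in\H_y$ with $\rho_1(z,y)>R$ some $k$ satisfies $\rho_1(z,x_k)<\rho_1(z,y)$. This graph condition is transported verbatim by $f$ to the analogous condition for $f(x_1),\ldots,f(x_{n+1}),f(y)$ in $L_2$, because horizontal subspaces and graph distances are preserved exactly. Translating back via Corollary~\ref{cor:C} recovers $(\widehat\Gamma')$ for the image points, yielding the desired equivalence. The direction $(\widehat\Gamma'_\rho)\Rightarrow(\widehat\Gamma')$ is immediate from the ceiling formula: $\rho_1(z,x_k)<\rho_1(z,y)$ forces $\lceil\|z-x_k\|_p\rceil\le\lceil\|z-y\|_p\rceil-1<\|z-y\|_p$.

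The main obstacle will be the reverse implication $(\widehat\Gamma')\Rightarrow(\widehat\Gamma'_\rho)$, since a strict but small $l_p$-inequality need not translate into a strict integer inequality of ceilings. To handle this I would replace $(\widehat\Gamma')$ by the equivalent open condition $(\widehat\Omega)$ from Lemma~\ref{l:main2}, then apply the compactness argument of Part~2 of that lemma's proof to the continuous function $\min_k(\pr(\textbf a_k),\textbf u)_E$ on the compact set $S_{1,\H}$. This yields a uniform positive lower bound on the asymptotic $l_p$-gap $\|z-y\|_p-\min_k\|z-x_k\|_p$ for all sufficiently distant $z\in\H_y$, and combined with the ceiling formula of Corollary~\ref{cor:C} it upgrades the $l_p$-inequality to the strict $\rho_1$-inequality required by $(\widehat\Gamma'_\rho)$, completing the reduction.
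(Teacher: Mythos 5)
Your overall strategy --- transporting $(\widehat\Gamma')$ through $f$ by re-expressing it in terms of the graph metric on $\H_y$ via Corollaries \ref{cor:horizontal2} and \ref{cor:C} --- is exactly the reduction the paper intends (the paper records the corollary as ``immediate'' from those two statements). The first half of your argument is sound: $f$ does map $\H_y$ bijectively onto $\H_{f(y)}$, and your implication $(\widehat\Gamma'_\rho)\Rightarrow(\widehat\Gamma')$ is correct.

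However, the bridging condition you chose is not equivalent to $(\widehat\Gamma')$, and the compactness argument you propose cannot repair this. Take $y$ and $x_1,\dots,x_{n+1}$ within $l_p$-distance $1/100$ of $y$, positioned so that $\pr(y)$ lies in the interior of the convex hull of the $\pr(x_k)$ (possible since $n\ge 2$); then $(\widehat\Omega)$, hence $(\widehat\Gamma')$, holds. But for $z=y+(N+\tfrac12)\textbf{u}$ with $\textbf{u}\in S_{1,\H}$ and $N\in\N$ large, every $\|z-x_k\|_p$ lies in $(N,N+1)$ together with $\|z-y\|_p$, so all ceilings coincide and, by Corollary \ref{cor:C}, $\rho_1(z,x_k)=\rho_1(z,y)$ for every $k$. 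Such $z$ exist arbitrarily far out in $\H_y$, so your condition $(\widehat\Gamma'_\rho)$, which demands the \emph{strict} inequality $\rho_1(z,x_k)<\rho_1(z,y)$, fails. The uniform gap $c>0$ you extract from compactness depends on the configuration and can be arbitrarily small, whereas the graph distance is integer-valued: a gap $c<1$ is invisible to the ceilings, so no such argument can upgrade the norm inequality to a strict $\rho_1$-inequality. The fix is to use the non-strict bridging condition ``there exists $R$ such that $\min_k\rho_1(z,x_k)\le\rho_1(z,y)$ for every $z\in\H_y$ with $\rho_1(z,y)>R$''. Under $(\widehat\Gamma')$ this holds, since $\min_k\|z-x_k\|_p<\|z-y\|_p$ forces $\min_k\lceil\|z-x_k\|_p\rceil\le\lceil\|z-y\|_p\rceil$; under $\neg(\widehat\Omega)$ it fails, because the separating direction $\textbf{u}$ produced in the proof of Lemma \ref{l:main2} yields points $z=y+N\textbf{u}$ with $\|z-y\|_p=N\in\N$ exactly and $\|z-x_k\|_p>N$ for all $k$, whence $\rho_1(z,x_k)\ge N+1>N=\rho_1(z,y)$. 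With this corrected dichotomy (non-strict inequality, and the integer-radius trick on the separating ray) the rest of your transport argument goes through.
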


Thus, we can show that $f$ preserves the "vertical" sets $V_x$.

\begin{lemma}\label{l:vert2}
Given $x_1,x_2 \in L_1$, we have $x_2 \in V_{x_1}$ if and only if $f(x_2) \in V_{f(x_1)}$.
\end{lemma}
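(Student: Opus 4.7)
The plan is to characterize ``$\pr(x_1) = \pr(x_2)$'' using condition $(\widehat\Omega)$ of Lemma \ref{l:main2}, which by the chain $(\widehat\Omega) \Leftrightarrow (\widehat\Gamma')$ together with Corollary \ref{cor:gamma} is preserved by $f$ on $(n{+}2)$-tuples of distinct points. Applying the same argument to $f^{-1}$, it suffices to prove the forward direction: if $\pr(x_1) = \pr(x_2)$, then $\pr(f(x_1)) = \pr(f(x_2))$.

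I would argue by contradiction. Suppose $p_1 := \pr(f(x_1))$ and $p_2 := \pr(f(x_2))$ are distinct points of $\H$. Since $n \geq 2$, I first choose points $q_3,\dots,q_{n+1} \in \H$ such that $p_2$ lies strictly in the interior of the simplex $\Delta_1 := \mathrm{conv}(p_1, q_3, \dots, q_{n+1})$; concretely, one can place the $q_j$ so that $p_1$ and the centroid of $q_3,\dots,q_{n+1}$ lie on opposite sides of $p_2$ at appropriate distances. The simplex $\Delta_2 := \mathrm{conv}(p_2, q_3, \dots, q_{n+1})$ has $p_2$ as a vertex, hence near $p_2$ it occupies only a proper convex cone; picking $q \in \H$ close enough to $p_2$ in the complementary (exterior) cone yields $q \in \mathrm{int}\,\Delta_1$ but $q \notin \Delta_2$.

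Next, I lift these planar configurations into $L_2$: choose $z_3, \dots, z_{n+1}, w \in L_2$ with $\pr(z_j) = q_j$ and $\pr(w) = q$, all pairwise distinct and distinct from $f(x_1), f(x_2)$. Setting $x_j := f^{-1}(z_j)$ and $y := f^{-1}(w)$, I obtain a tuple in $L_1$ with all points distinct from $x_1, x_2$. By construction, condition $(\widehat\Omega)$ holds in $L_2$ for $(f(x_1), z_3, \dots, z_{n+1}, w)$ but fails for $(f(x_2), z_3, \dots, z_{n+1}, w)$. Transporting both statements to $L_1$ via Lemma \ref{l:main2} and Corollary \ref{cor:gamma}, $(\widehat\Omega)$ must hold for $(x_1, x_3, \dots, x_{n+1}, y)$ and fail for $(x_2, x_3, \dots, x_{n+1}, y)$. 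But $\pr(x_1) = \pr(x_2)$ makes the two convex hulls appearing in these two instances of $(\widehat\Omega)$ identical, forcing both to have the same truth value --- a contradiction.

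The main obstacle is the convex-geometric construction of $q_3,\dots,q_{n+1}$ and $q$ with the prescribed containment/non-containment: the hypothesis $n \geq 2$ is essential so that the $n{+}1$ points in $\H$ can span a full-dimensional simplex, and the argument at the vertex $p_2$ of $\Delta_2$ is what produces the required asymmetry between the $x_1$-tuple and the $x_2$-tuple. Once this small picture is justified, everything else is a clean logical transfer through the lemmas already established.
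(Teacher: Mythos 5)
Your overall strategy --- encode ``$\pr(x_1)=\pr(x_2)$'' through condition $(\widehat\Omega)$ of Lemma \ref{l:main2} and transfer it via the chain $(\widehat\Omega)\Leftrightarrow(\widehat\Gamma')$ and Corollary \ref{cor:gamma} --- is exactly the paper's, but your concrete construction has an off-by-one error that kills the contradiction. Lemma \ref{l:main2} is stated for $n+1$ points $x_1,\dots,x_{n+1}$ plus $y$, and $(\widehat\Omega)$ asks for the interior of the convex hull of their $n+1$ projections in the $n$-dimensional space $\H$ to be non-empty. Your tuples $(p_1,q_3,\dots,q_{n+1})$ and $(p_2,q_3,\dots,q_{n+1})$ contain only $n$ points each, so their convex hulls are at most $(n-1)$-dimensional simplices with empty interior in $\H$: condition $(\widehat\Omega)$ fails for \emph{both} tuples regardless of whether $p_1\neq p_2$, no asymmetry is produced, and the lemma does not even apply in the form you invoke it. The repair is routine --- take $n$ auxiliary points $q_2,\dots,q_{n+1}$ so that $p_1,q_2,\dots,q_{n+1}$ span a non-degenerate simplex with $p_2$ in its interior, and then your vertex-cone argument for choosing $q$ inside $\mathrm{conv}(p_1,q_2,\dots,q_{n+1})$ but outside $\mathrm{conv}(p_2,q_2,\dots,q_{n+1})$ goes through --- but as written the proof does not close.

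It is also worth noting that the paper avoids your two-tuple comparison entirely: it places \emph{both} $x_1$ and $x_2$ into a single $(n+1)$-tuple $(x_1,x_2,x_3,\dots,x_{n+1})$. Then $\pr(x_1)=\pr(x_2)$ leaves at most $n$ distinct projections, so the convex hull is automatically degenerate and $(\widehat\Omega)$ can never be satisfied for any completion $x_3,\dots,x_{n+1},y$; conversely, when the projections differ such a completion exists. This universally quantified non-existence statement transfers directly through Corollary \ref{cor:gamma} in both directions, with no need for the delicate containment/non-containment construction near the vertex $p_2$ that your argument hinges on.
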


\begin{proof}
Since $x_2 \in V_{x_1}$, we get $\pr(x_1) = \pr(x_2)$. Then there are no such points $x_3,...,x_{n+1}$ that the interior of the convex hull of $x_1,...,x_{n+1}$ in $\H$ is not an empty set. This implies that there are no points $x_3,...,x_{n+1},y \in L_1$ such that condition $(\widehat\Omega)$ of Lemma \ref{l:main2} holds for $x_1,...,x_{n+1},y$. Then, by Lemma \ref{l:main2}, there are no such points $x_3,...,x_{n+1},y\in L_1$ that condition $(\widehat\Gamma')$ holds for $x_1,...,x_{n+1},y$. Hence, due to Corollary \ref{cor:gamma}, there are no such points $x_3,...,x_{n+1},y\in L_1$ that condition $(\widehat\Gamma')$ holds for $f(x_1),...,f(x_{n+1})$, $f(y)$. Hence, there are no such points $x'_3,...,x'_{n+1},y' \in L_2$ that $(\widehat\Omega)$ holds for $f(x_1)$, $f(x_2)$, $x_3'$, $...,x_{n+1}',y'$, and consequently $\pr(f(x_1)) = \pr(f(x_2))$. Then $f(x_2) \in V_{f(x_1)}$.
\end{proof}

Now we can also show that $f$ preserves the horizontal lines and the order of the points on the horizontal lines.

\begin{lemma}\label{l:hor_line}
    Let $x_1,x_2,x_3 \in L_1$ be different points on the same horizontal line, and let $y \in (x,z)$. Then $f(x_1),f(x_2),f(x_3) \in L_2$ are different points on the same horizontal line and $f(x_2) \in (f(x_1),f(x_3))$.
\end{lemma}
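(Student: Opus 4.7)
The plan is to adapt the strategy of Lemma~\ref{l:vert_xyz} to the horizontal situation, using convexity of the $l_p$-norm in place of perpendicular bisectors. First I would show that $f(x_1),f(x_2),f(x_3)$ lie in a common horizontal $n$-dimensional plane. Applying Corollary~\ref{cor:horizontal2} to the horizontal pairs $(x_1,x_2)$ and $(x_1,x_3)$, both of the lines $f(x_1)f(x_2)$ and $f(x_1)f(x_3)$ are horizontal, and since they share the point $f(x_1)$, all three of $f(x_1),f(x_2),f(x_3)$ lie in $H_{f(x_1)}$. It remains to promote this to collinearity with $f(x_2)$ strictly between $f(x_1)$ and $f(x_3)$.

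Since $x_2=\lambda x_1+(1-\lambda)x_3$ with $\lambda\in(0,1)$, the triangle inequality in the $l_p$-norm gives $||a-x_2||_p\le\max(||a-x_1||_p,||a-x_3||_p)$ for every $a\in L_1$. Taking ceilings and applying Corollary~\ref{cor:C} (valid once $\rho_1(a,x_2)$ exceeds $C_1+\max_j||x_j-x_2||_p$, which forces $\rho_1(a,x_j)>C_1$ for $j=1,3$ too), one obtains $\rho_1(a,x_2)\le\max(\rho_1(a,x_1),\rho_1(a,x_3))$. Substituting $a=f^{-1}(b)$ and using $\rho_1=\rho_2\circ f$ yields
\[
\rho_2(b,f(x_2))\le\max\bigl(\rho_2(b,f(x_1)),\rho_2(b,f(x_3))\bigr)\qquad(\dagger)
\]
for every $b\in L_2$ with $\rho_2(b,f(x_2))$ sufficiently large (the preimage $f^{-1}(b)$ is then correspondingly far from $x_2$ by Corollary~\ref{cor:C} applied in both directions).

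Suppose for contradiction that $f(x_2)\notin[f(x_1),f(x_3)]$, and write $u=f(x_1)$, $v=f(x_3)$, $w=f(x_2)$, all in the horizontal plane $H_w\subset L_2$. I would produce a witness $b\in H_w$ violating $(\dagger)$ of the form $b=w+\lambda b_0$, $b_0\in\H\setminus\{0\}$. Smoothness and strict convexity of the $l_p$-ball for $p\in(1,+\infty)$ yield, as $\lambda\to+\infty$, the expansion $||b-u||_p=\lambda||b_0||_p-\Phi_{b_0}(u-w)+o(1)$, and analogously for $v$ and for $w$ (with no constant term in the $w$-case), where $\Phi_{b_0}$ is the $l_p$-support functional at $b_0$. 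The choice $b_0=(u-w)/||u-w||_p+(v-w)/||v-w||_p$ makes both $\Phi_{b_0}(u-w)>0$ and $\Phi_{b_0}(v-w)>0$, with strict positivity precisely because $w\notin(u,v)$ (otherwise the two normalised directions are antiparallel and cancel in the sum, killing one of the gaps). Finally, tuning $\lambda$ so that $\rho_2(b,w)>C_2$ and so that the fractional parts of $||b-u||_p$ and $||b-v||_p$ each sit just below an integer, the ceilings produce $\rho_2(b,w)>\max(\rho_2(b,u),\rho_2(b,v))$, contradicting $(\dagger)$. Hence $f(x_2)\in[f(x_1),f(x_3)]$, giving both collinearity and the correct order.

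The main obstacle is this last step: the gap $\Phi_{b_0}(u-w)$ is a fixed positive constant rather than something growing with $\lambda$, so upgrading the strict $l_p$-inequality to a strict inequality of ceilings is a genuine fractional-part adjustment of the free parameter $\lambda$. Finding a single direction $b_0$ whose $l_p$-support functional is simultaneously strictly positive on both $u-w$ and $v-w$ also relies on smoothness of the $l_p$-ball in an essential way, matching the hypothesis $p\in(1,+\infty)$ of Theorem~\ref{t:main}.
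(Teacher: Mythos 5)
Your strategy is sound and, apart from one fixable slip at the very end, gives a correct proof --- but it takes a genuinely different route from the paper's. The paper first proves collinearity of $f(x_1),f(x_2),f(x_3)$ separately, by feeding the degenerate configuration (all three projections on one line, so no points $x_4,\dots,x_{n+1},y$ can realize condition $(\widehat\Omega)$) through Lemma \ref{l:main2} and Corollary \ref{cor:gamma}; only then does it prove betweenness, using the inequality opposite to yours, namely $\rho_1(w,x_2)\ge\min_{i\in\{1,3\}}\rho_1(w,x_i)$ for $w$ far away \emph{on the line} $l$ itself, together with a witness $w\in f(l)$ beyond $f(x_2)$ chosen so that $\|w-f(x_2)\|_p\in\N$ and the ceiling is attained exactly. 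Your version derives collinearity and betweenness in one stroke from the convexity inequality $\rho(a,x_2)\le\max(\rho(a,x_1),\rho(a,x_3))$ by letting the witness direction range over the whole horizontal plane; this buys you independence from Lemma \ref{l:main2}, Corollary \ref{cor:gamma} and Lemma \ref{l:vert2} for this particular lemma, at the price of the asymptotic support-functional expansion. The transfer of $(\dagger)$ through $f$, the reduction to ceilings via Corollary \ref{cor:C}, and the fractional-part tuning of $\lambda$ are all fine.

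The one genuine flaw is the explicit choice $b_0=(u-w)/\|u-w\|_p+(v-w)/\|v-w\|_p$. Writing $e_1=(u-w)/\|u-w\|_p$ and $e_2=(v-w)/\|v-w\|_p$, for $p=2$ the choice works because $\langle e_1+e_2,e_i\rangle = 1+\langle e_1,e_2\rangle>0$ for non-antiparallel unit vectors; but for general $p\in(1,+\infty)$ the support functional at $e_1+e_2$ need not be positive on both summands. For instance, with $p=1.1$, $e_1=(1,0)$ and $e_2=(-1+\delta,s)$ where $s^{1.1}=1-(1-\delta)^{1.1}\approx 1.1\delta$, one computes $\Phi_{b_0}(e_2)\propto -\delta^{0.1}(1-\delta)+s^{1.1}<0$ for small $\delta$. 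The correct justification is the one you gesture at in your closing paragraph: $w\notin[u,v]$ is equivalent to $0\notin[u-w,v-w]$, so strict separation of the point $0$ from this compact convex segment yields a linear functional $\phi$ on $\H$ with $\phi(u-w)>0$ and $\phi(v-w)>0$; smoothness and strict convexity of the $l_p$-norm restricted to $\H$ make the duality (gradient) map surjective onto the dual sphere, so $\phi$ is proportional to $\Phi_{b_0}$ for some $b_0\in\H\setminus\{0\}$. With that replacement your argument closes.
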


\begin{proof}
The projections $\pr(x_1),\pr(x_2), \pr(x_3)$ are pairwise different because $x_1,x_2,x_3$ are different points on the same $n$-dimensional horizontal hyperplane $H_{x_1}$. Corollary \ref{cor:horizontal2} shows that $f(x_2),f(x_3) \in H_{f(x_1)}$. Due to Lemma \ref{l:vert2}, the projections $\pr f(x_1),\pr f(x_2),\pr f(x_3)$ are different. 

Since $x_1,x_2,x_3$ are on the same line $l$, then $\pr(x_1),\pr(x_2), \pr(x_3)$ are on the same horizontal line. Then there are no such points $x_4,...,x_{n+1}\in L_1$ that the interior of the convex hull of $x_1,...,x_{n+1}$ in $\H$ is an empty set (in the case $n=2$, the interior of the convex hull of $x_1,...,x_3$ in $\H$ is an empty set). Then there are no points $x_4,...,x_{n+1},y\in L_1$ such that condition $(\widehat\Omega)$ of Lemma \ref{l:main2} holds for $x_1,...,x_{n+1},y$. Then, by Lemma \ref{l:main2}, there are no points $x_4,...,x_{n+1},y\in L_1$ such that condition $(\widehat\Gamma')$ of Lemma \ref{l:main2} holds for $x_1,...,x_{n+1},y$. Due to Corollary \ref{cor:gamma}, there are no points $x_4,...,x_{n+1},y\in L_1$ such that condition $(\widehat\Gamma')$ of Lemma \ref{l:main2} holds for $f(x_1),...,f(x_{n+1}),f(y)$. Hence, there are no such points $x'_4,...,x'_{n+1},y'\in L_2$ that condition $(\widehat\Omega)$ of Lemma \ref{l:main2} holds for $f(x_1),f(x_2),f(x_3),x'_4,...,x'_{n+1},y'$. Hence, $f(x_1),f(x_2),f(x_3)$ are on the same horizontal line.

Now we will prove that $f(x_2)\in [f(x_1),f(x_3)]$.

Take the constant $C$ as guaranteed by Corollary \ref{cor:C}.

Denote the set
$$W_{x_1,x_2,x_3} = \{w \in l: \min_{i \in \{1,2,3\}}\rho_1(w,x_i) > \max_{i<j;\, i,j \in \{1,2,3\}} \rho_1(x_i,x_j)+C+1 \}.$$

For any $w \in W_{x_1,x_2,x_3}$ and $i \in \{1,2,3\}$ we have $\rho_1(w,x_i) > C$. Then, by Corollary \ref{cor:C},
$$\forall w \in W_{x_1,x_2,x_3}:\; ||w-x_i||_p \geq \rho_1(w,x_i)-1 \geq$$
$$\geq \max_{i<j;\, i,j \in \{1,2,3\}} \rho_1(x_i,x_j)+C \geq \max_{i<j;\, i,j \in \{1,2,3\}} ||x_i-x_j||_p+C.$$

Hence, any point $w \in W_{x_1,x_2,x_3}$ is not contained in the segment $[x_1,x_3]$.

Thus, by obvious geometric reasoning,
$$\forall w \in W_{x_1,x_2,x_3}:\; ||w-x_2||_p \geq \min_{i\in\{1,3\}} ||w-x_i||_p;$$
$$\forall w \in W_{x_1,x_2,x_3}:\; ||w-x_2||_p > C.$$

Hence, due to Corollary \ref{cor:C},
$$(*)\forall w \in W_{x_1,x_2,x_3}:\; \rho_1(w,x_2) \geq \min_{i\in\{1,3\}} \rho_1(w,x_i).$$

The first part of this proof shows that the set $f(l)$ is a horizontal line in $L_2$. Analogously to $W_{x_1,x_2,x_3}$, denote the set
$$\; W'_{x_1,x_2,x_3} = \{w \in f(l): \min_{i \in \{1,2,3\}}\rho_2(w,f(x_i)) > $$
$$>\max_{i<j;\, i,j \in \{1,2,3\}} \rho_2(f(x_i),f(x_j))+C+1 \}.$$

Since $f$ is an isomorphism of the graphs $G_1,G_2$, for any $ a,b \in \T$ we have $\rho_1(a,b) = \rho_2(f(a),f(b))$. Then
$$f(W_{x_1,x_2,x_3}) = W'_{x_1,x_2,x_3};$$
$$(*')\; \forall w \in W'_{x_1,x_2,x_3}:\; \rho_2(w,f(x_2)) \geq \min_{i\in\{1,3\}} \rho_2(w,f(x_i)).$$

Assume that $f(x_2) \notin [f(x_1),f(x_3)]$. Take a point $w \in f(W_{x_1,x_2,x_3})$ such that $f(x_2) \in [f(x_1),w]\cap [f(x_3),w]$, and $||w-f(x_2)||_p \in \N$, see Fig. \ref{fig:order_on_hor_line}. Then $\rho_2(w,f(x_2)) = ||w-f(x_2)||_p < \min(\rho_2(w,f(x_1)), \rho_2(w,f(x_3))).$ This contradicts $(*')$.

Hence, $f(x_2)\in [f(x_1),f(x_3)]$.
\end{proof}

\begin{figure}[ht]
\center{\includegraphics[scale=0.5, width=250pt]{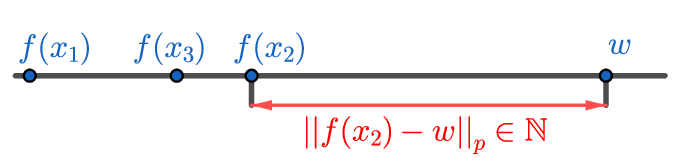}}
\caption{The point $w \in f(W_{x_1,x_2,x_3})$ such that $f(x_2) \in [f(x_1),w]\cap [f(x_3),w]$, and $||w-f(x_2)||_p \in \N$.}
\label{fig:order_on_hor_line}
\end{figure}

The following lemma shows that $f$ preserves the distances on the horizontal lines.

\begin{lemma}\label{l:ratio}
    Let $x,y \in L_1$ be on the same horizontal line, and let $||x-y|| \in \Q$. Then $||x-y||_p$ = $||f(x)-f(y)||_p$.
\end{lemma}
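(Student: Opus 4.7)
My plan is to prove the stronger statement that $f$ restricted to any horizontal hyperplane $H_x \subset L_1$ is an $l_p$-isometry onto $H_{f(x)} \subset L_2$; the lemma then follows immediately, since any two points $x,y$ on a common horizontal line both lie in a single horizontal hyperplane $H_x$.

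First I check that $f(H_x) = H_{f(x)}$. For any $y \in H_x$, the segment from $x$ to $y$ is horizontal (being contained in $H_x$), so by Corollary \ref{cor:horizontal2} its image is a horizontal line in $L_2$ through $f(x)$ and $f(y)$, forcing $f(y) \in H_{f(x)}$. The reverse inclusion comes from applying the same argument to $f^{-1}$. Next, I show that $f|_{H_x}$ sends lines to lines. For any line $L \subset H_x$, fix two distinct points $a,b \in L$; by Corollary \ref{cor:horizontal2}, the images $f(a), f(b)$ determine a unique horizontal line $L' \subset H_{f(x)}$. For any third point $c \in L$, Lemma \ref{l:hor_line} applied to $\{a,b,c\}$ (in whichever order realizes the betweenness relation on $L$) places $f(c)$ on the horizontal line through $f(a), f(b)$, which is exactly $L'$. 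Iterating over $c$ yields $f(L) \subset L'$, and applying the same reasoning to $f^{-1}$ gives $f(L) = L'$.

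Since $\dim H_x = n \geq 2$, the bijection $f|_{H_x}: H_x \to H_{f(x)}$ sends lines to lines, so the fundamental theorem of affine geometry --- any collineation between real affine spaces of dimension at least $2$ is an affine transformation --- forces $f|_{H_x}$ to be affine. Writing $f|_{H_x}(p) = A(p-x) + f(x)$ for some linear isomorphism $A: \H \to \H$, the unit-distance preservation of $f$ ensures that $A$ maps the $l_p$ unit sphere of $\H$ to itself, hence $A$ is an $l_p$-isometry of $\H$. Consequently $f|_{H_x}$ is a full $l_p$-isometry, and $||f(x) - f(y)||_p = ||x - y||_p$ for every $y \in H_x$, a stronger conclusion than the rational-distance case stated in the lemma.

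The main obstacle is verifying the "lines to lines" property rigorously: one must argue that not merely $f(a), f(b)$ but every $f(c)$ for $c \in L$ lands on the same horizontal line $L'$ --- rather than on a scattered family of parallel horizontal lines inside $H_{f(x)}$. The order-preserving monotonicity furnished by Lemma \ref{l:hor_line} is the crucial ingredient that rules out this scattering and pins $f(L)$ to the single line $L'$.
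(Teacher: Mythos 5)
Your proof is correct, but it takes a genuinely different route from the paper's. The paper first observes (via Corollary \ref{cor:distk}) that integer distances on horizontal lines are preserved, and then transfers an arbitrary rational ratio $||x-y||_p/||z-y||_p=r/q$ by an explicit intercept-theorem configuration: two unit-step chains of lengths $r$ and $q$ drawn in a horizontal $2$-plane through the line, whose combinatorial description is preserved by $f$ thanks to Lemmas \ref{l:hor_line} and \ref{l:vert2}; real distances are then recovered from rational ones using the order preservation of Lemma \ref{l:hor_line}. You instead note that $f$ restricted to a horizontal hyperplane $H_x$ is a bijection onto $H_{f(x)}$ carrying lines onto lines --- both facts are indeed available from Corollary \ref{cor:horizontal2}, Lemma \ref{l:hor_line}, and the same statements for $f^{-1}$ --- invoke the fundamental theorem of affine geometry (legitimately, since $n\geq 2$ and $\R$ has no nontrivial field automorphisms) to conclude that this restriction is affine, and then observe that its linear part $A$ fixes the $l_p$ unit sphere of $\H$ setwise, hence is a linear isometry. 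Your argument is shorter and yields in one stroke the stronger conclusion that all real distances on horizontal hyperplanes are preserved, at the cost of importing a nontrivial classical theorem that the paper's elementary, self-contained ruler construction avoids; the paper reaches the same real-distance conclusion only in the final sentence of its proof, so the two arguments end at the same place. The one point you should state explicitly if you write this up is the precise version of the affine-geometry theorem you use (bijection of real affine spaces of dimension at least two mapping collinear triples to collinear triples), since weaker ``lines into lines'' hypotheses without bijectivity admit degenerate counterexamples.
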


\begin{proof}
Due to Proposition \ref{prop:dist2}, $f$ preserves the integer distances on the horizontal lines.

    Let $x,z$ be on the same horizontal line $l$, let $y\in [x,z]$, and let $\frac{||x-y||_p}{||z-y||_p} = \frac{r}{q}$. Take a horizontal plane $P$ such that $l \in P$ (it exists because $n\geq 2$.
    Then, by elementary geometric reasoning, there exist different horizontal lines $l_x,l_y,l_z \subset P$ and different points $x_0=x,x_1,...,x_r, z_0=z,z_1,...,z_q \in P$, such that:

    $(i)$ $l\notin \{l_x,l_y,l_z\}$;

    $(ii)$ $\forall i \in [r]:\; ||x_i - x_{i+1}||_p = 1$;

    $(iii)$ $\forall i \in [q]:\; ||z_i - z_{i+1}||_p = 1$;

    $(iv)$ $x_r, z_q, y \in l_y$;

    $(iv)$ $y \in [x_r,z_q]$;

    $(vi)$ $l_x \cap l_z = \varnothing$,

    as is shown in Fig. \ref{fig:ratio}. 
    
    Then, due to Lemma \ref{l:hor_line}, we get the analogous properties $(i)-(vi)$ for $f(l_x),f(l_y),f(l_z)$, $f(l),f(x_i),f(z_i)$. Hence, $\frac{||f(x)-f(y)||_p}{||f(z)-f(y)||_p} = \frac{r}{q} = \frac{||x-y||_p}{||z-y||_p}$.

    Then $f$ preserves the rational distances on the horizontal lines (that is, if $||x-y||_p \in \Q$, then $||f(x)-f(y)||_p \in \Q$). Due to Lemma \ref{l:hor_line}, $f$ preserves the order of points on the horizontal lines. Therefore, $f$ preserves all the real distances on the horizontal lines.
\end{proof}

\begin{figure}[ht]
\center{\includegraphics[scale=0.5, width=250pt]{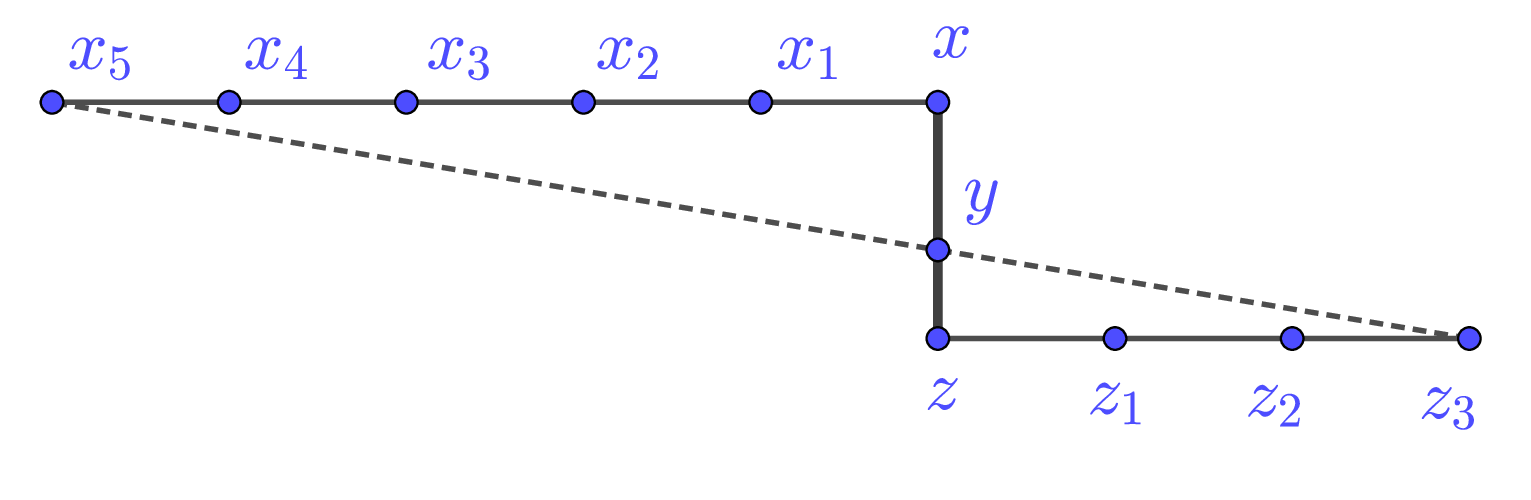}}
\caption{The points $x,y,z$ are on the same horizontal line, $\frac{||x-y||_p}{||z-y||_p} = \frac{5}{3}$}.
\label{fig:ratio}
\end{figure}

Now we can introduce a simple construction that distinguishes $\eps_1\neq\eps_2$, instead of $(N,M)$-comb in the proof of Theorem \ref{t:planar}.

Let $d_{m,p}$ be the diameter of the metric space on $[0,1]^m$ with $l_p$-metric.
\begin{lemma}\label{l:final}
    The condition $\eps_i \geq \eps$ holds if and only if there exist points $x,y,z \in L_i$ and numbers $k \in \N, \delta \in \R$ such that $y \in V_x, z\in H_x, ||y-z||_p = k, ||z-x||_p = \delta$, and $k^p - \delta^p \geq \eps^p\cdot d_{m,p}^p$.
\end{lemma}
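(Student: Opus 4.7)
The plan is to reduce the inequality $k^p - \delta^p \geq \eps^p d_{m,p}^p$ to a purely geometric condition on the $l_p$-length of a single vertical displacement inside $L_i$, and then compare that length against the $l_p$-diameter of the cube $[0,\eps_i]^m$.

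The central observation is that since $y \in V_x$ and $z \in H_x$, the vectors $y - x \in \V$ and $z - x \in \H$ are supported on disjoint coordinate blocks in $\T$. For the $l_p$-norm this gives the splitting identity
\[
\|y-z\|_p^p \;=\; \|y-x\|_p^p + \|z-x\|_p^p,
\]
and hence $k^p - \delta^p = \|y-x\|_p^p$. Therefore the inequality in the statement is equivalent to $\|y-x\|_p \geq \eps \cdot d_{m,p}$, and the roles of $z$ and $k$ collapse to choosing an integer $k \geq \|y-x\|_p$ together with a horizontal vector $z-x$ of length $\delta = (k^p - \|y-x\|_p^p)^{1/p}$.

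To prove the ``$\Leftarrow$'' direction I would take arbitrary $x, y, z, k, \delta$ as in the statement and use that $y - x \in \V$ has each of its $m$ coordinates in $[-\eps_i, \eps_i]$, so $\|y-x\|_p \leq \eps_i \cdot d_{m,p}$ by the definition of $d_{m,p}$ as the diameter of $[0,1]^m$ in $l_p$. Combined with $\|y-x\|_p \geq \eps \cdot d_{m,p}$, this forces $\eps_i \geq \eps$. For ``$\Rightarrow$'' I would produce an explicit construction: take $x$ with vertical coordinates $(0,\dots,0)$ and $y \in V_x$ with vertical coordinates $(\eps_i,\dots,\eps_i)$, so that $\|y-x\|_p = \eps_i \cdot d_{m,p} \geq \eps \cdot d_{m,p}$; pick any integer $k \geq \eps_i \cdot d_{m,p}$, set $\delta := (k^p - \|y-x\|_p^p)^{1/p}$, and use the assumption $n \geq 2$ to realize a point $z \in H_x \subset L_i$ at $l_p$-distance exactly $\delta$ from $x$ (the horizontal slice $H_x$ is an unbounded copy of $\H$, so any nonnegative radius is attainable).

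The only step that really requires care is the splitting identity $\|y-z\|_p^p = \|y-x\|_p^p + \|z-x\|_p^p$; it crucially uses that $\H$ and $\V$ occupy disjoint coordinate blocks in $\T$, which is exactly the feature that makes the vertical/horizontal decomposition respect the $l_p$-geometry. Beyond this, everything is routine bookkeeping about the cube diameter $d_{m,p} = m^{1/p}$ and the freedom to place $z$ horizontally, so I do not expect any genuine obstacle.
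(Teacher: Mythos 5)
Your proof is correct and follows essentially the same route as the paper: the key point in both is the block-splitting identity $\|y-z\|_p^p = \|y-x\|_p^p + \|z-x\|_p^p$ for $y-x \in \V$, $z-x \in \H$, which reduces the inequality to $\|y-x\|_p \geq \eps\, d_{m,p}$ and then to comparing with the diameter $\eps_i\, d_{m,p}$ of the vertical fibre. Your write-up just makes explicit the two directions that the paper leaves implicit, so no further comment is needed.
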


\begin{proof}
The diameter of the set $H_x$ in $L_i$ is equal to $\eps\cdot d_{m,p}$.
    Since $\T = \H \times \V$, it is enough to apply the equality
    $$||y-z||_p = \left(||x-y||_p^p+||x-z||_p^p \right)^{1/p}.$$
\end{proof}

\begin{corollary}\label{cor:final}
    $\eps_1 \geq \eps$ if and only if $\eps_2 \geq \eps$.
\end{corollary}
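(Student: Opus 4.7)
The plan is to read Lemma \ref{l:final} as a geometric characterization of the relation $\eps_i \geq \eps$: such an inequality holds precisely when one can find three points $x,y,z \in L_i$ and two numbers $k \in \N$, $\delta \in \R$ witnessing the four combinatorial/metric relations $y \in V_x$, $z \in H_x$, $\|y-z\|_p = k$, $\|z-x\|_p = \delta$, together with the numerical inequality $k^p - \delta^p \geq \eps^p d_{m,p}^p$. By symmetry (applying $f^{-1}$ in place of $f$) it suffices to prove the forward direction, so I assume $\eps_1 \geq \eps$, fix a witnessing triple $(x,y,z)$ in $L_1$, and claim that $(f(x), f(y), f(z))$ is a witnessing triple in $L_2$ for the same values $k, \delta$.

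The verification of the four relations is a one-to-one appeal to the structural lemmas already proved. The vertical incidence $f(y) \in V_{f(x)}$ is exactly Lemma \ref{l:vert2}. The horizontal incidence $f(z) \in H_{f(x)}$ follows from Corollary \ref{cor:horizontal2} when $x \neq z$ and is trivial when $x = z$ (in which case $\delta = 0$, so the whole condition reduces to a vertical statement handled already by Lemma \ref{l:vert2}). The integer equality $\|f(y)-f(z)\|_p = k$ is Corollary \ref{cor:distk} applied to $k \in \N$. The remaining horizontal equality $\|f(z)-f(x)\|_p = \delta$ is the one place where $\delta$ need not be rational, so I invoke the strengthened conclusion in the last sentence of the proof of Lemma \ref{l:ratio}, which upgrades preservation from rational to arbitrary real horizontal distances via density together with the order-preservation in Lemma \ref{l:hor_line}. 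Since $k$ and $\delta$ are carried over unchanged, the inequality $k^p - \delta^p \geq \eps^p d_{m,p}^p$ transfers verbatim, and Lemma \ref{l:final} applied inside $L_2$ gives $\eps_2 \geq \eps$.

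There is no genuine obstacle in this step: the work was absorbed into Lemmas \ref{l:vert2}, \ref{l:hor_line}, \ref{l:ratio} and Corollary \ref{cor:distk}. The only points requiring mild care are the degenerate case $x = z$ and the irrationality of $\delta$, both of which I handled above. To finish Theorem \ref{t:main}, I would then argue that if $\eps_1 \neq \eps_2$, say $\eps_1 > \eps_2$, then choosing $\eps = \eps_1$ in Corollary \ref{cor:final} yields $\eps_2 \geq \eps_1$, a contradiction; hence $\eps_1 = \eps_2$, and the converse direction (that equal widths give isomorphic unit distance graphs via the identity) is immediate.
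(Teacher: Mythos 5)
Your proof is correct and follows essentially the same route as the paper: transfer the witnessing triple $(x,y,z)$ of Lemma \ref{l:final} through $f$ using Lemma \ref{l:vert2} for the vertical incidence, Corollary \ref{cor:horizontal2}/Lemma \ref{l:hor_line} for the horizontal one, Corollary \ref{cor:distk} for the integer distance $k$, and the real-distance strengthening at the end of Lemma \ref{l:ratio} for $\delta$. Your explicit handling of the degenerate case $x=z$ and of irrational $\delta$ is slightly more careful than the paper's, but the argument is the same.
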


\begin{proof}
    By Lemma \ref{l:final}, if $\eps_1 \geq \eps$, then there exist points $x,y,z \in L_1$ and numbers $k \in \N, \delta \in \R$ such that $y \in V_x, z\in H_x, ||y-z||_p \leq k, ||z-x||_p = \delta$, and $k^p - \delta^p \geq \eps^p\cdot d_{m,p}^p$. 
    
    Due to Lemma \ref{l:hor_line}, the line $f(x)f(z)$ is horizontal because $xz$ is horizontal. Then $f(z) \in H_{f(x)}$. Due to Lemma \ref{l:vert2}, $f(y) \in V_{f(x)}$, because $y \in V_x$. Due to Lemma \ref{l:ratio}, we have $||f(z)-f(x)||_p = ||z-x||_p = \delta$. Due to Proposition \ref{cor:distk}, we have $||f(y)-f(z)|| = ||y-z||=k$.

    So, there exist points $f(x),f(y),f(z) \in L_2$ and numbers $k \in \N, \delta \in \R$ such that $f(y) \in V_{f(x)}, f(z)\in H_{f(x)}, ||f(y)-f(z)||_p \leq k, ||f(z)-f(x)||_p = \delta$, and $k^p - \delta^p \geq \eps^p\cdot d_{m,p}^p$. Then, due to Lemma \ref{l:final}, $\eps_2 \geq \eps$.
\end{proof}

Hence, by Corollary \ref{cor:final}, we get $\eps_1=\eps_2$, which completes the proof of Theorem \ref{t:main}.

\section{Appendix: proof of Theorem \ref{t:appendix}}

Theorem \ref{t:appendix} is a simple corollary of the proof of Theorem \ref{t:main}.

Let $\phi$ be an automorphism of the unit distance graph $G$ of layer $L = L(n,1,2,\eps) = \R^n \times [0,\eps]$.

    Due to Lemmas \ref{l:vert2}, \ref{l:hor_line}, Lemma \ref{l:ratio} applied to $L_1=L_2 = L, f=\phi$, the map $\phi$ preserves the horizontal lines, the vertical segments, and the distances on the horizontal lines.

    Let $x,y \in L$ form a vertical segment and let $||x-y||_2=\alpha$. There exist $z \in L$ and $k \in \N$ such that $z \in H_x, ||z-x||_2 = \sqrt{k^2-\alpha^2}, ||z-y||_2=k$. Then $\phi(z) \in H_{\phi(x)}$, $\phi(y) \in V_{\phi(x)}$, $||z-y||_2 = ||\phi(z)-\phi(y)||_2 = k$, and $||\phi(z)-\phi(x)||_2 = ||z-x||_2 = \sqrt{k^2-\alpha^2}$. Hence,    
    $$||\phi(x)-\phi(y)||_2 = \sqrt{||\phi(z)-\phi(y)||_2^2 - ||\phi(z)-\phi(x)||_2^2} = \alpha.$$

     Thus, $\phi$ preserves the lengths of the vertical segments and the horizontal segments. Since $||\cdot||_2$ is the standard Euclidean norm, $\phi$ is an isometry.

\end{document}